\newtheorem{theorem}{Theorem}[section]
\newtheorem{proposition}[theorem]{Proposition}
\newtheorem{lemma}[theorem]{Lemma}
\newtheorem{remark}[theorem]{Remark}
\newtheorem{ass}[theorem]{Assumption}
\numberwithin{equation}{section}
\def\ts{\thinspace}
\renewcommand{\epsilon}{\varepsilon}
\newcommand{\eps}{\varepsilon}
\newcommand{\R}{\mathbb{R}}
\newcommand{\bigo}{\mathcal{O}}
\newcommand{\C}{\mathbb{C}}
\newcommand{\N}{\mathbb{N}}
\newcommand{\T}{\mathbb{T}}
\newcommand{\Z}{\mathbb{Z}}
\newcommand{\K}{\mathcal{K}}
\newcommand{\U}{\mathcal{U}}
\def \s{\rm s}
\def\CC{{\mathbb C}}% complex numbers
\def\RR{{\mathbb R}}% real numbers
\def\ds{\displaystyle}
\def\bs{\bigskip}
\def\pa{\partial}
\DeclareMathOperator{\IM}{Im}
\title{
A new class of uniformly accurate numerical schemes for highly oscillatory evolution equations
}
\author{ 
Philippe Chartier\textsuperscript{1}, Mohammed Lemou\textsuperscript{2}, Florian M\'ehats\textsuperscript{3} and Gilles Vilmart\textsuperscript{4}
}
\begin{document}
\footnotetext[1]{
INRIA Rennes, IRMAR and ENS Rennes, Campus de Beaulieu, F-35170 Bruz, France.
Philippe.Chartier@inria.fr}
\footnotetext[2]{
CNRS, IRMAR and ENS Rennes, Campus de Beaulieu, F-35170 Bruz, France.
Mohammed.Lemou@univ-rennes1.fr}
\footnotetext[3]{
Univ Rennes, INRIA, CNRS, IRMAR - UMR 6625, F-35000 Rennes, France.
Florian.Mehats@univ-rennes1.fr}
\footnotetext[4]{
Universit\'e de Gen\`eve, Section de math\'ematiques, 2-4 rue du Li\`evre, CP 64, CH-1211 Gen\`eve 4, Switzerland, Gilles.Vilmart@unige.ch}

\maketitle

\begin{abstract}
We introduce a new methodology to design {\em uniformly accurate methods} for oscillatory evolution equations. The targeted models are envisaged  in a wide spectrum of regimes, from non-stiff to highly-oscillatory. Thanks to an averaging transformation, the stiffness of the problem is softened, allowing for standard schemes to retain their usual orders of convergence. Overall,  high-order numerical approximations are obtained with errors and at a cost {\em independent} of the regime.

\smallskip
\noindent
{\it Keywords:\,}
highly-oscillatory problems,  stroboscopic averaging, standard averaging, micro-macro method, pullback method, uniform accuracy.
\smallskip

\noindent
{\it AMS subject classification (2010):\,}
65L20, 74Q10, 35K15.
 \end{abstract}
\begin{center}
Communicated by Christian Lubich
\end{center}
%\tableofcontents
%--------------------------------------------------------------
\section{Introduction}
In this article, we consider time-dependent differential equations with (possibly high-) oscillations of the form
\begin{eqnarray} \label{eq:HOP}
\dot u^\eps(t) = f_{t/\eps}\left(u^\eps(t)\right)  \in X,  \quad u^\eps(0)=u_0 \in X, \quad t \in [0,1], 
\end{eqnarray}
where the function $(\theta,u) \in \T \times X \mapsto f_\theta(u)$ is assumed to be $1$-periodic w.r.t.\,$\theta$ on the torus $\T = \R / \Z \equiv  [0,1]$, and where $X$ denotes a Banach space endowed with a norm $\|\cdot\|_X$. The set $X$ may be simply $\R^d$, in which case (\ref{eq:HOP}) is a simple ordinary differential equation in finite dimension, or it  may be a {\em functional space}, 
say for instance the space of square-integrable functions $L^2(\R)$, in applications to {\em partial differential equations} such as the Schr\"odinger or the Klein-Gordon equations. Here, $\eps$ is a parameter whose value can vary in an  
interval of the form $]0,1]$. We emphasize that we do not restrict (\ref{eq:HOP}) to its asymptotic regime where $\eps$  tends to zero, but also allow $\eps$ to have a non-vanishing value, say close to $1$. 
\begin{remark}
The autonomous problem
\begin{equation} \label{eq:AP}
\dot v^\eps=\frac{1}{\eps}Av^\eps+f(v^\eps) \in X, \quad v^\eps(0)=v_0 \in X, \quad t \in [0,1], 
\end{equation}
where the map $\theta \mapsto e^{\theta A}$ is assumed to be $1$-periodic, can be straightforwardly rewritten within the format (\ref{eq:HOP}). As a matter of fact, the change of variable $v^\eps=e^{-\frac{t}{\eps} A}u^\eps$ transforms (\ref{eq:AP}) into  (\ref{eq:HOP}) with $f_\theta(u)=e^{-\theta A}f\left(e^{\theta A}u\right)$.
\end{remark}
Methods from the (recent and not so recent) literature are suitable either for the highly-oscillatory regime where $\eps$ is small, or for the non-stiff one where $\eps$ is close to $1$. However, they usually fail for certain values of $\eps$ in the range $]0,1]$,  indeed becoming  inaccurate or inefficient. For instance, the authors of this article have introduced several schemes
\cite{ccmss1,cmmv,cms,cmss3,cmss2}, inspired by the theory of {\em averaging} or the theory of {\em modified equations}, which happen to be efficient  for small values of $\eps$ and quite inaccurate for larger ones. This degraded behaviour appears as a consequence of the use of truncated series with no consideration of induced error terms. Recently, a new class of {\em uniformly accurate methods}, namely {\em two-scale methods} \cite{cclm,clm,clm2}, has emerged (see also the methods developed in \cite{bao-dong,faou-schratz2} using a different approach) that are capable of solving (\ref{eq:HOP}) with an accuracy and at a cost both independent of the value of $\eps \in ]0,1]$. Such methods rely on an explicit separation of scales $u^\eps(t) = \left. U^\eps(t,\theta)\right|_{\theta=t/\eps}$ with $\theta \in \T$ and the resolution of the transport equation 
\begin{align} \label{eq:transport}
\partial_t U^\eps(t,\theta) + \frac{1}{\eps} \partial_\theta U^\eps(t,\theta) = f_\theta(U^\eps(t,\theta))
\end{align}
with a well-chosen initial condition $U^\eps(0,\theta)=U_0^\eps(\theta)$. A numerical approximation $u_k^\eps$ of the solution $u^\eps(t_k)$ of (\ref{eq:HOP}) at time $t_k$ is then obtained from a $p^{th}$-order approximation of $U^\eps(t_k,\theta_k)$  by taking $\theta_k=t_k/\eps$ (see \cite{cclm} for the details of the technique). At a given computational cost $\chi \approx \frac{1}{\Delta t}$, this yields approximations on an uniform grid $t_k=k \Delta t$, $k=0,\ldots,K$, that satisfy the following error estimate 
\begin{align} \label{eq:error}
\max_{0 < \eps \leq 1} \; \max_{0 \leq k \Delta t \leq 1} \|u_k^\eps - u^\eps(k \Delta t)\|_{X} \leq C \Delta t^p,
\end{align}
where the constant $C$ may depend on $p$ but is {\em independent of $\Delta t$}  and $\eps$. Although the error behaviour (\ref{eq:error}) is unarguably favourable, the resolution of the transport equation (\ref{eq:transport}) as introduced in \cite{cclm,clm} requires to design specific schemes, a task that is not straightforward for high orders. This aspect of two-scale methods has left them open to criticisms and has motivated the search for a technique dealing with (\ref{eq:HOP}) in a more direct and more affordable way. 

In this work, we adopt the following strategy: through a suitable time-dependent (and periodic) change of variable, we {\bf transform equation (\ref{eq:HOP}) into a {\em smooth} differential equation which is then amenable to a standard numerical treatment}. By standard treatment, we mean here that {\bf any non-stiff numerical method (i.e. any integrator which could be applied to eq. \eqref{eq:HOP} with  $\eps=1$)  can be used for all values of $\eps \in ]0,1]$} and will exhibit its usual order of convergence when applied to the transformed equation. Unsurprisingly, the aforementioned change of variable is a key-ingredient of the theory of averaging. However, and this constitutes the main advantage of this strategy over the methods discussed above, no abrupt truncation is applied in the process, so that all the information originally contained in (\ref{eq:HOP}) is retained. In this way, uniformly accurate schemes are elaborated simply from {\em standard} numerical schemes by adjoining a discretization of the change of variable. 

In Section \ref{sect:aver}, we discuss the main ingredients of averaging theory. Firstly, we introduce in Subsection \ref{subsect:nutschell} the periodic change of variable and the averaged vector field for both stroboscopic and  standard averaging.  In particular, we write down, (i) the equation (\ref{eq:ideal}) that they {\em formally} satisfy, (ii) the inferred iteration (\ref{eq:phiiter}) from which they can be recursively built and, (iii) the corresponding defect (\ref{eq:defect}). Subsection \ref{subsect:standard} completes this expository section with the regularity estimates obtained so far in previous works. 

Section \ref{sect:tua} is the core of our paper. Subsection \ref{subsect:uam} presents the two methods that are later on analysed, namely the {\em micro-macro method} and the {\em pullback method}, together with possible variants. In Subsection \ref{subsect:bounds}, the defect and its time-derivatives after $n$ iterations are upper bounded by powers of $\eps$. Finally, we show in Subsection \ref{subsect:regularity} that the transformed equations obtained both by the micro-macro and pullback methods are smooth and indeed can be solved by non-stiff standard methods. 

Section \ref{sect:construction}  describes the actual implementation of the two most interesting methods, that is to say on the one hand, the {\em micro-macro method with standard averaging}, and on the other hand, the {\em pullback method with stroboscopic averaging}. Both methods are uniformly accurate. The first one appears to be easier to implement and cheaper. The second one has the advantage of being {\em geometric}, allowing in particular for the conservation of first integrals.

Finally, Section \ref{sect:numerical}  presents numerical experiments with the two techniques of Section \ref{sect:construction} used in combination with standard methods of orders $2$, $3$ and $4$. The H\'enon-Heiles problem and the Klein-Gordon equation are used as test-cases. As expected, the stroboscopic pullback technique exhibits a clear-cut preservation of invariants. In practice, all methods are evidently uniformly accurate. Up to our knowledge, this section demonstrates  the first example of geometric, high-order and uniformly accurate methods available in the literature. 

\section{Numerical methods pertaining to averaging theory} \label{sect:aver}
%In order to continue the discussion into further details, 
In this section, we recall the main result of averaging. Since our technique is intented for a large range of $\eps$-values, the following assumption on (\ref{eq:HOP}) appears as a natural prerequisite.
\begin{ass} \label{ass:K}
There exists an open set $\K \subset X$ with bounded closure $\bar \K$ and a open set $\U^0 \subset \K$, such that for all initial condition $u_0$ in $\U^{0}$ and all $\eps \in ]0,1]$, the solution $u^\eps(t)$ of 
\eqref{eq:HOP} 
%\begin{eqnarray} \label{eq:HOPtilde}
%\dot u^\eps(t) = f_{t/\eps}\left(u^\eps(t)\right),  \quad u^\eps(0)=u_0, 
%\end{eqnarray}
exists and remains in $\K$ for all times $t \in [0,1]$. 
\end{ass}
%The way we expose this result is borrowed from

\subsection{Averaging in a nutschell}
\label{subsect:nutschell}
We have chosen here to follow closely the presentation in \cite{ccmm} as it appears to be more convenient for the forthcoming developments. Other presentations can be found in classical references, e.g. \cite{perko} and \cite{SV}, or in recent papers, with a more algebraic point of view \cite{cms,cmss3,cmss2}. 
 %We shall consider the situation where $\eps$ may depend on time in the sequel. 
%In the remaining of the paper, if the function $(\theta,u) \mapsto g_\theta(u)$ is $1$-periodic w.r.t. $\theta$, we will denote its average by
%$$
%\langle g \rangle (u)= \int_0^1 g_\theta(u) \, d\theta.
%$$
Generally speaking, the purpose of averaging methods is to  write the exact solution of (\ref{eq:HOP}) as the composition
\begin{align} \label{eq:phipsi}
u^\eps(t) = \Phi^\eps_{t/\eps} \circ \Psi^\eps_t \circ (\Phi^\eps_{0})^{-1}(u_0)
\end{align}
of a periodic {\em change of variable} $(\theta,u) \in \T \times \K \mapsto \Phi^\eps_\theta(u)$ and of the {\em flow map} $(t,u) \in [0,1] \times \K \mapsto \Psi^\eps_t(u)$ of an autonomous differential equation of the form 
\begin{align} \label{eq:psi}
\dot \Psi^\eps_t(u) = F^\eps(\Psi^\eps_t(u)), \quad \Psi^\eps_0(u)=u,
\end{align}
where the map $u \in \K \mapsto F^\eps(u)$ is a smooth vector field. The rationale behind averaging is apparent: the dynamics is sought after as the superposition of a {\em slow drift}, captured by $\Psi^\eps_t$, and of  {\em high oscillations}, captured by $\Phi^\eps_{t/\eps}$. Various design choices are possible for the  map $\Phi^\eps$ and we refer to \cite{cmss3} for a thorough investigation of their implications. 
In this paper, we shall impose that  either $\Phi^\eps_0$ or the average $\langle \Phi^\eps \rangle$ coincides with the {\em identity map} ${\rm id}_X$, where 
$$
\langle  g\rangle(u) = \int_0^1 g_\theta(u) d \theta
$$ 
stands for the average
%\footnote{That is to say for $g \in {\cal C}(\T \times X; X)$ 
%\begin{align} \label{eq:Fmoy}
%\langle  g\rangle(u) = \int_0^1 g_\theta(u) d \theta.
%\end{align}
%}
on $\T$ of any continuous function $g:\T \times X\rightarrow X$, two procedures referred to respectively as {\em stroboscopic averaging} and {\em standard averaging}:
\begin{itemize}
\item \textit{Stroboscopic averaging $\Phi^\eps_0={\rm id}_X$}. Owing to periodicity, $\Phi^\eps_{t/\eps}={\rm id}_X$ for times $t\in \eps\N$ that are integer multiples of the period, hence the name ``stroboscopic''. When the problem \eqref{eq:HOP} under consideration embeds a structure (e.g. Hamiltonian structure) or possesses invariants (e.g. quadratic first integrals), stroboscopic averaging, being {\em geometric}, is to be preferred. 
\item \textit{Standard averaging $\langle \Phi^\eps \rangle={\rm id}_X$}.  This choice is more widely used and leads to a more simple implementation. In the context of kinetic theory, it is related to Chapman-Enskog method (see for instance in \cite{clm2}). 
\end{itemize}

For the decomposition in (\ref{eq:phipsi}) to exist, the maps $\Phi^\eps$ and $F^\eps$ would be required to satisfy the equation
\begin{align} \label{eq:ideal}
\frac{1}{\eps}\partial_\theta \Phi^{\eps}_\theta(u) +
 \partial_u \Phi^{\eps}_\theta(u) \; F^\eps(u) =  f_\theta \circ \Phi^{\eps}_\theta(u) \; \mbox{ with } \; F^{\eps}= \langle  \partial_u \Phi^{\eps} \rangle^{-1} \langle f \circ \Phi^{\eps} \rangle,
\end{align}
obtained by separation of scales.  
Unfortunately, it is a known fact that equation (\ref{eq:ideal}) has no rigorous solution:  only a {\em formal} solution  under the form of a divergent\footnote{Note, however, that this series is convergent for a linear right-hand side in (\ref{eq:HOP}) with bounded operator and for all sufficiently small~$\eps$.} series in $\eps$ can be exhibited. Truncation thereof generates errors which can be made polynomially or exponentially small w.r.t.\,$\eps$, depending on the smoothness of the vector field $f$ in (\ref{eq:HOP}) w.r.t.\,$u$. In averaging theory, as presented in \cite{ccmm}, approximations $\Phi^{[n]}$ and $F^{[n]}$ of $\Phi^\eps$ and $F^\eps$ are thus constructed from equation (\ref{eq:ideal}), starting from $\Phi^{[0]}={\rm id}_X$ and $F^{[0]}=\langle f  \rangle$, as follows 
\begin{eqnarray} \label{eq:phiiter}
\Phi^{[k+1]}_\theta = {\rm id}_X - \eps G^{[k+1]} +  \eps \int_0^\theta \hskip-1ex \Big(f_\tau \circ \Phi^{[k]}_\tau - \partial_u \Phi^{[k]}_\tau F^{[k]} \Big) d\tau,\quad  F^{[k]}= \langle  \partial_u \Phi^{[k]} \rangle^{-1} \langle f \circ \Phi^{[k]} \rangle,
\end{eqnarray}
for $k=0, \ldots,n-1$.
The functions $G^{[k+1]}$ could in principle be chosen arbitrarily: they vanish for {\em stroboscopic averaging}, or are determined to ensure that $\langle \Phi^{[k]} \rangle = {\rm id}_X$,  i.e.
\begin{align} \label{eq:G}
G^{[k+1]}=\left \langle\int_0^\theta \Big(f_\tau \circ \Phi^{[k]}_\tau - (\partial_u \Phi^{[k]}_\tau) \langle f \circ \Phi^{[k]} \rangle  \Big) d\tau \right \rangle,
\end{align} 
for {\em standard averaging}. 
Since equation (\ref{eq:ideal}) is impossible to satisfy exactly, a natural measure of success of the averaging idea is given by the size of the defect, defined by 
\begin{align} \label{eq:defect}
\delta_\theta^{[k]}(u) = \frac{1}{\eps} \partial_\theta \Phi^{[k]}_\theta(u) +
\partial_u \Phi^{[k]}_\theta(u) \; F^{[k]}(u) - f_\theta \circ \Phi^{[k]}_\theta(u), \quad k=0, \ldots,n.
\end{align}
\begin{remark} \label{rem:rel}
Starting from the stroboscopic decomposition $u^\eps(t) = \Phi^{\eps}_{t/\eps} \circ \Psi^\eps_t(u_0)$ with $\Phi^\eps_0={\rm id}_X$, we may write (once again formally at this stage)
$$
u^\eps(t) = \underbrace{\Phi^{\eps}_{t/\eps} \circ \langle \Phi^{\eps} \rangle^{-1}}_{\tilde \Phi^{\eps}_{t/\eps}} \circ \underbrace{\langle \Phi^{\eps} \rangle \circ \Psi^\eps_t \circ \langle \Phi^{\eps} \rangle^{-1}}_{\tilde \Psi^{\eps}_{t}} \circ \underbrace{\langle \Phi^{\eps} \rangle}_{ (\tilde \Phi^{\eps}_0)^{-1}} (u_0).
$$
It can indeed be checked, on the one hand, that 
$$
\langle \tilde \Phi^{\eps} \rangle = \langle  \Phi^{\eps} \circ \langle \Phi^{\eps} \rangle^{-1} \rangle  = \langle  \Phi^{\eps} \rangle \circ \langle \Phi^{\eps} \rangle^{-1} = {\rm id}_X \mbox{ and } \tilde \Phi^{\eps}_0 = \Phi^{\eps}_0 \circ \langle \Phi^{\eps} \rangle^{-1} = {\rm id}_X \circ \langle \Phi^{\eps} \rangle^{-1} = \langle \Phi^{\eps} \rangle^{-1},
$$
and on the other hand, that 
$$
\frac{d}{dt} \tilde \Psi^{\eps}_t(u_0) = \Big(\left(\partial_u \langle \Phi^{\eps} \rangle  \,  F \right) \circ \langle \Phi^{\eps} \rangle^{-1} \Big) \circ \tilde \Psi^{\eps}_t(u_0) =\langle f \circ \tilde \Phi^{\eps} \rangle \circ \tilde \Psi^{\eps}_t(u_0)= \tilde F^\eps \circ \tilde \Psi^{\eps}_t(u_0).
$$
This mean that the maps $\tilde \Phi^{\eps}$, $\tilde \Psi^{\eps}$ and $\tilde F^\eps$ are those corresponding to standard averaging and that we have the following relation on the defects, 
$$
\tilde \delta_\theta^{\eps} = \delta_\theta^{\eps} \circ \langle \Phi^\eps \rangle^{-1}.
$$ 
Conversely, we may relate $\Phi^{\eps}$ to $\tilde \Phi^{\eps}$ through the formula
$$
\Phi^{\eps}_\theta = \tilde \Phi^{\eps}_\theta \circ (\tilde \Phi^{\eps}_0)^{-1}.
$$
\end{remark}
\subsection{Standard error estimates in averaging theory}
\label{subsect:standard}
The main result of \cite{ccmm} now ensures that the maps 
$\Phi^{[n]}$ and $F^{[n]}$ given by iteration (\ref{eq:phiiter}) are well-defined and satisfy equation (\ref{eq:ideal}) up to small error terms of the form ${\cal O}(\eps^{n})$. It is derived under additional analyticity assumptions on $f$. As in \cite{ccmm}, we thus introduce the complexification $X_\C$ of $X$ (which is just $X_\C=\C^d$ in the case where  $X=\R^d$), defined as 
%%%%%%%%%%%%%%%%%%%%%%%%%%%%%%%%%%%%%%%%%%%%%%%%%
$$
X_\C=\{U:=u_r+i u_i, (u_r,u_i) \in X^2\},
$$
where $u_r=\Re(U) \in X$ and  $u_i = \Im(U) \in X$ are the real and imaginary parts of $U$.
%%%%%%%%%%%%%%%%%%%%%%%%%%%%%%%%%%%%%%%%%%%%%%%%%
The space $X_\C$ is still a Banach space when endowed with the norm (which we will simply denote $\|\cdot\|$ in the sequel)
%\footnote{The quantity $\|z\|_X+\|\tilde z\|_X$ defines a norm on the $\R$-vector-space $X_\C \equiv X \times X$, but not on $X_\C$ seen as a $\C$-vector-space: for $\lambda=\alpha+i\beta \in \C$, it is generally not true that $\|\alpha z - \beta \tilde z\|_X+\|\beta z + \alpha \tilde z\|_X = |\lambda| (\|z\|_X+\|\tilde z\|_X)$.}
%%%%%%%%%%%%%%%%%%%%%%%%%%%%%%%%%%%%%%%%%%%%%%%%%
$$
\|U\|_{X_\C} := \sup_{\lambda \in \C^*} \frac{ \|\Re(\lambda U)\|_X}{|\lambda|},
$$
and we can consider for $\rho \geq 0$ the closed set 
$$
{\cal K}_\rho =  \{u+\tilde u: u \in \bar \K, \tilde u \in X_\C,  \|\tilde  u\| \leq \rho \}.
$$
Now, given a bounded map $(\theta,u) \in \T \times {\cal K}_\rho \mapsto \varphi_\theta (u)$, we finally denote 
$$
\|\varphi\|_{\rho} := \sup_{\theta \in \T, \; u \in \K_\rho} \|\varphi_\theta(u)\|
$$ 
and if $\varphi$ has $p$ bounded derivatives w.r.t.\,$\theta$
$$
\|\varphi\|_{\rho,p} := \max_{\nu=0,\ldots,p} \|\partial^\nu_\theta \varphi_\theta\|_{\rho}.
$$
The main hypothesis that was required in \cite{ccmm} is the following. \\ \\
%{\bf Assumption (A):}
\begin{ass} \label{ass:A}
The function $(\theta,u) \mapsto f_\theta(u)$ is  continuous w.r.t.\,$\theta$ and there exists $R>0$ such that for all $\theta \in \T$, the function $f_\theta(\cdot)$ can be extended to an {\em analytic}  map from $\K_{2R}$ to $X_\C$. In addition, there exists $M > 0$ such that 
\begin{align}
\|f\|_{2R} \leq M.
\end{align}
\end{ass} 

An immediate consequence is that for all $u_0+\tilde u_0 \in {\cal U}^0_i$ where 
$$
{\cal U}^0_i  :=  \left\{u_0+\tilde u_0: u_0 \in {\cal U}^0, \tilde u_0 \in X_\C,  \|\tilde  u_0\| \leq R e^{-\frac{M}{R}} \right\},
$$
for all $\eps \in]0,1]$ and all $t \in [0,1]$, the solution $u^\eps(t)$ of 
\begin{align} \label{eq:gode}
\dot u^\eps(t) = f_{t/\eps}(u^\eps(t)), \quad u^\eps(0)=u_0 + \tilde u_0,
\end{align}
remains in $\K_R$. By Assumption \ref{ass:A}, $f_\theta$ is indeed Lipschitz on $\K_R$ with Lipschitz constant $M/R$, so that by virtue of the Gronwall lemma and Assumption \ref{ass:K}, $u^\eps(t)$ remains in $\K_R$ for $\|\tilde u_0\| \, e^{M/R} \leq R$.  Now, the following result has been proved in the case of stroboscopic averaging.
\begin{theorem} [See Castella, Chartier, Murua and M\'ehats \cite{ccmm}] \label{th:mta}
Suppose that Assumptions \ref{ass:K} and \ref{ass:A} are satisfied and let $\eps_0:=R/(8M)$. For every positive integer $n$ and positive $\eps$ such that $(n+1) \eps \leq \eps_0$, the maps $\Phi^{[n]}$ and $F^{[n]}$ given by iteration (\ref{eq:phiiter}) with all $G^{[k]} \equiv 0$, are well-defined, $\Phi^{[n]}$ is continuously differentiable w.r.t.\,$\theta$ and both $\Phi^{[n]}$ and its average $\langle \Phi^{[n]} \rangle$ are invertible with analytic inverse on $\K_R$. Moreover, the following estimates hold for all $0 < \eps < \eps_0$,
\begin{align*}
(i) \quad \|\Phi^{[n]} - {\rm id}\|_R \leq \frac{R}{2(n+1)} \quad  (ii) \quad \|F^{[n]}\|_R \leq 2 M \quad  (iii) \quad \|\delta^{[n]}\|_R \leq 2 M \left( 2 (n+1) \frac{\eps}{\eps_0}\right)^n.
\end{align*}
\end{theorem}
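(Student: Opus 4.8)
The plan is to establish the structural claims together with the three bounds simultaneously, by induction on $k=0,1,\dots,n$, on a nested family of complex domains $\K_{\rho_0}\supset\K_{\rho_1}\supset\cdots\supset\K_{\rho_n}\supset\K_R$, where $\rho_0=2R$ is the domain of analyticity of $f$ and each step loses a fixed fraction of the width, $\rho_{k+1}=\rho_k-d$ with $d$ a suitable multiple of $R/(n+1)$ chosen so that the total loss stays below $R$, hence $\rho_n>R$. The workhorse is the Cauchy estimate $\|\partial_u\varphi\|_{\rho-d}\le d^{-1}\|\varphi\|_\rho$ for analytic $\varphi$, which converts a sup-bound on $\Phi^{[k]}-\mathrm{id}$ into a bound on its $u$-derivative, together with a Neumann series to invert $\langle\partial_u\Phi^{[k]}\rangle$.

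For $k=0$ everything is immediate: $\Phi^{[0]}=\mathrm{id}$ gives (i); $F^{[0]}=\langle f\rangle$ gives $\|F^{[0]}\|\le M$, hence (ii); and $\delta^{[0]}_\theta=\langle f\rangle-f_\theta$ gives $\|\delta^{[0]}\|\le 2M$, the $k=0$ case of (iii). For the inductive step, assume $\|\Phi^{[k]}-\mathrm{id}\|_{\rho_k}\le R/(2(n+1))$. Then $\Phi^{[k]}$ maps $\K_{\rho_k}$ into $\K_{2R}$ (using $\rho_k\le 2R-d$ for $k\ge1$, the case $k=0$ being trivial), so $f\circ\Phi^{[k]}$ is well defined; the Cauchy estimate yields $\|\partial_u(\Phi^{[k]}-\mathrm{id})\|_{\rho_{k+1}}\le \tfrac12$, so $\langle\partial_u\Phi^{[k]}\rangle=I+\langle\partial_u(\Phi^{[k]}-\mathrm{id})\rangle$ is invertible with inverse of norm $\le 2$, whence $F^{[k]}$ is well defined and $\|F^{[k]}\|\le 2M$, i.e. (ii). Plugging these into the iteration (\ref{eq:phiiter}) and using $\|\partial_u\Phi^{[k]}\|\le\tfrac32$ gives $\|\Phi^{[k+1]}-\mathrm{id}\|_{\rho_{k+1}}\le\eps(M+\tfrac32\cdot 2M)=4M\eps$, which is $\le R/(2(n+1))$ precisely because $(n+1)\eps\le\eps_0=R/(8M)$; this closes the induction on (i) and shows $\Phi^{[k+1]}$ is well defined on $\K_{\rho_{k+1}}\supseteq\K_R$. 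The $\calC^1$-dependence on $\theta$ propagates along the induction: the integrand in (\ref{eq:phiiter}) is continuous in $\tau$ by Assumption \ref{ass:A} and the inductive continuity of $\Phi^{[k]}$, so differentiating the integral exhibits $\partial_\theta\Phi^{[k+1]}$ as a continuous function of $\theta$. Finally, since $\|\Phi^{[n]}-\mathrm{id}\|_R\le R/(2(n+1))$ and $\|\partial_u\Phi^{[n]}-I\|\le\tfrac12<1$ (Cauchy, on a slightly smaller domain), the equation $\Phi^{[n]}_\theta(u)=y$ is a uniform contraction in $u$ whose solution depends analytically on $y$ by the holomorphic inverse function theorem; the same argument applied to $\langle\Phi^{[n]}\rangle=\mathrm{id}+\langle\Phi^{[n]}-\mathrm{id}\rangle$ gives invertibility of the average. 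This settles everything but the rate in (iii).

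For the defect, the key is the identity obtained by subtracting the definition (\ref{eq:defect}) of $\delta^{[k]}$ from the relation $\partial_\theta\Phi^{[k+1]}=\eps(f_\theta\circ\Phi^{[k]}_\theta-\partial_u\Phi^{[k]}_\theta F^{[k]})$ that follows from differentiating (\ref{eq:phiiter}): one gets $\partial_\theta(\Phi^{[k+1]}-\Phi^{[k]})=-\eps\,\delta^{[k]}$, and since $G^{[k]}\equiv G^{[k+1]}\equiv0$ both maps equal $\mathrm{id}$ at $\theta=0$, so $\Phi^{[k+1]}_\theta-\Phi^{[k]}_\theta=-\eps\int_0^\theta\delta^{[k]}_\tau\,d\tau$ and $\|\Phi^{[k+1]}-\Phi^{[k]}\|\le\eps\|\delta^{[k]}\|$. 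Writing $\delta^{[k+1]}_\theta=(f_\theta\circ\Phi^{[k]}_\theta-f_\theta\circ\Phi^{[k+1]}_\theta)+(\partial_u\Phi^{[k+1]}_\theta F^{[k+1]}-\partial_u\Phi^{[k]}_\theta F^{[k]})$, each group is estimated, after a further $d$-shrink, in terms of $\|\Phi^{[k+1]}-\Phi^{[k]}\|$: the first by the Lipschitz bound on $f$ (Cauchy gives Lipschitz constant $\lesssim M/d$), the second by expanding $F^{[k+1]}-F^{[k]}$ through the resolvent identity for $\langle\partial_u\Phi\rangle^{-1}$ and bounding the increments of $\langle f\circ\Phi\rangle$ and $\langle\partial_u\Phi\rangle$ again in terms of $\|\Phi^{[k+1]}-\Phi^{[k]}\|$ and its $u$-derivative. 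Collecting the pieces gives $\|\delta^{[k+1]}\|\le C\,M(n+1)\eps R^{-1}\|\delta^{[k]}\|$ with $C$ an absolute constant, and tracking constants shows $CM(n+1)/R=2(n+1)/\eps_0$, so $\|\delta^{[k+1]}\|\le 2(n+1)\eps\eps_0^{-1}\|\delta^{[k]}\|$; iterating from $\|\delta^{[0]}\|\le2M$ yields (iii).

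The one genuinely delicate point is the domain bookkeeping: one must check that the finitely many Cauchy/Lipschitz shrinks needed to build $(\Phi^{[k]},F^{[k]})$ and, in parallel, to run the defect recursion all fit inside the available width $2R-R=R$, uniformly in $n$ — i.e. that taking the per-step loss $d\asymp R/(n+1)$ keeps every object controlled on $\K_R$. Once the domains are laid out this way, the remaining steps are routine Cauchy-estimate and Neumann-series manipulations, and the specific value $\eps_0=R/(8M)$ emerges from the single inequality $4M\eps\le R/(2(n+1))$.
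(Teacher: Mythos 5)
Your proposal is correct and follows essentially the same route as the cited proof in \cite{ccmm}, which the paper itself recalls and extends in its appendix (Lemmas \ref{lem:basic}--\ref{lem:contract} and the chain of shrinking domains $R_k=2R-k\,R/(n+1)$): your defect recursion based on $\delta^{[k+1]}=\Lambda(\Phi^{[k]})-\Lambda(\Phi^{[k+1]})$ together with $\Phi^{[k+1]}_\theta-\Phi^{[k]}_\theta=-\eps\int_0^\theta\delta^{[k]}_\tau\,d\tau$ is precisely the contraction property of $\Gamma^\eps$ with ratio $16M\eps(n+1)/R=2(n+1)\eps/\eps_0$. The constant tracking and domain bookkeeping you flag as delicate do close exactly as you claim, with per-step loss $R/(n+1)$ landing on $\K_R$ after $n+1$ shrinks (cf.\ Lemma \ref{lem:contract} and the proof of Theorem \ref{th:delta}).
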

It is known that by choosing $(n+1)$ appropriately depending on $\eps$, the upper-bound of $\delta^{[n]}$ can be made exponentially small in $\eps$, but this is not our main interest here. 

\section{The uniform accuracy paradigm for averaging methods} \label{sect:tua}
The vector field $F^{[n]}$ appearing in Theorem \ref{th:mta} 
has a Taylor expansion with respect to $\eps\rightarrow 0$ which coincides to (within error terms of size $\eps^n$) with a formal series $\langle f \rangle  + \eps F_2 + \eps^2 F_3 + \ldots$, whose successive terms can be determined analytically \cite{ccmm,cmss3}. 
Hence,  it makes perfect sense for small values of $\eps$ to simply ignore the discrepancy in equation (\ref{eq:phipsi}) and to approximate \eqref{eq:HOP} from the differential  equation
$\dot u = F^{[n]}(u)$ (complemented with the computation of $\Phi^{[n]}$), yielding an error of size $\bigo(\eps^n)$.  However, if the value of $\eps$ is not so small -- close to $1$ for instance --, this procedure becomes completely inaccurate. The aim of the forthcoming techniques is  precisely to resolve this difficulty.

%\subsection{Four different ways to derive uniformly accurate averaging methods}
\subsection{Uniformly accurate averaging: micro-macro and pullback methods}
\label{subsect:uam}

A uniformly accurate method should thus satisfy two requirements: on the one hand,  it should filter high-oscillations in one way or another -- and thus rely on the principle of averaging -- in order to be {\em efficient} for small values of $\eps$, and on the other hand,  it should incorporate the relevant information for large values of $\eps$ so as to remain accurate. This leads to the two following alternative methods considered in this paper.

%%%%%%%%%%%%%%%%%%%%%%%%%%%%%%%%%
% Method 1
%%%%%%%%%%%%%%%%%%%%%%%%%%%%%%%%%
\paragraph{Micro-macro method.} If we insist  on solving an autonomous equation of the form (\ref{eq:psi}), i.e.
$$
\dot \Psi^{[n]}_t(u) = F^{[n]} \left( \Psi^{[n]}_t (u) \right), \quad \Psi^{[n]}_0 (u)=u,
$$
then it is of paramount importance to retain the information contained in the remainder 
$$
{\cal G}^{[n]}_t(u_0) = u^\eps(t) - \Phi^{[n]}_{t/\eps} \circ \Psi^{[n]}_t \circ (\Phi^{[n]}_{0})^{-1}( u_0),
$$
which obeys the following differential equation 
\begin{align*}
\frac{d}{dt} {\cal G}^{[n]}_t(u_0)  &= \dot u^\eps(t) - \Big( \frac{1}{\eps}\partial_\theta \Phi^{[n]}_{t/\eps}
  + \partial_u \Phi^{[n]}_{t/\eps}  \; F^{[n]}  \Big)(\Psi^{[n]}_t (\check u_0)) \\
&=f_{t/\eps} \Big(\Phi^{[n]}_{t/\eps} \circ \Psi^{[n]}_t(\check u_0)+{\cal G}^{[n]}_t(u_0)\Big) - f_{t/\eps} \Big(\Phi^{[n]}_{t/\eps} \circ \Psi^{[n]}_t(\check u_0)\Big) -  \delta_{t/\eps}^{[n]}(\Psi^{[n]}_t(\check u_0))
\end{align*}
with $\check u_0= (\Phi^{[n]}_{0})^{-1}(u_0)$. 
Upon using the Gronwall lemma, it can be shown (see Section \ref{subsect:regularity}) that  ${\cal G}^{[n]}_t(u_0)$ and its $n$ first time-derivatives are uniformly bounded w.r.t.\,$\eps$ provided $\delta^{[n]}$ (see relation (\ref{eq:defect})) and its first $n$ derivatives with respect to $\theta$ are of size $\eps^{n}$. % and $n$ times differentiable w.r.t.\,$\theta$ with uniformly bounded derivatives. 
This leads to the following {\em micro-macro} system
\begin{eqnarray} \label{eq:micromacro}
\left\{
\begin{array}{llll}
\dot v &=& F^{[n]}(v), & \quad v(0) = (\Phi^{[n]}_0)^{-1}(u_0) \\ 
\dot w &=& f_{t/\eps} \Big(\Phi^{[n]}_{t/\eps} (v)+w \Big) - \Big( \frac{1}{\eps}\partial_\theta \Phi^{[n]}_{t/\eps}
  + \partial_u \Phi^{[n]}_{t/\eps}  \; F^{[n]}  \Big)(v), & \quad w(0)=0
  \end{array}
  \right.
 \end{eqnarray}
from which the solution of \eqref{eq:HOP} can be recovered as $u^\eps(t)=\Phi^{[n]}_{t/\eps}(v(t))+w(t)$. 
%%%%%%%%%%%%%%%%%%%%%%%%%%%%%%%%%
% Method 2
%%%%%%%%%%%%%%%%%%%%%%%%%%%%%%%%%
\paragraph{Pullback method.}
If we insist on satisfying an equation of the form (\ref{eq:phipsi}) exactly, i.e.
\begin{align} \label{eq:phipsin}
u^\eps(t) = \Phi^{[n]}_{t/\eps} \circ \Psi^{[n]}_{t} \circ (\Phi^{[n]}_{0})^{-1} (u_0), 
\end{align}
then an autonomous equation of the form (\ref{eq:psi}), i.e. 
$$
\dot \Psi_t^{[n]}(\check u_0) = F \circ \Psi_t^{[n]}(\check u_0), \quad \Psi_0^{[n]}(u_0) = \check u_0,
$$
where  $\check u_0 = (\Phi^{[n]}_{0})^{-1} (u_0)$, 
can not hold and should instead be amended to
\begin{align} \label{eq:psiR}
\dot \Psi_t^{[n]}(\check u_0) = F^{[n]}(\Psi_t^{[n]}(\check u_0)) + {\cal R}^{[n]}_{t/\eps}(\Psi_t^{[n]}(\check u_0)), \quad \Psi^{[n]}_0(\check u_0)=\check u_0.
\end{align}
As a matter of fact, by differentiating (\ref{eq:phipsin}) as follows (with $\check u_0 = (\Phi^{[n]}_{0})^{-1} (u_0)$)
%$0= u^\eps(t) - \Phi^\eps_{t/\eps} \circ \Psi^\eps_t \circ (\Phi^\eps_{0})^{-1}(\tilde u_0)$ 
\begin{align*}
0 &= \dot u^\eps(t) - \frac{1}{\eps}\partial_\theta \Phi^{[n]}_{t/\eps}  (\Psi^{[n]}_t (\check u_0))
  - \partial_u \Phi^{[n]}_{t/\eps} (\Psi^{[n]}_t (\check u_0)) \; \dot \Psi^{[n]}_t (\check u_0) \\
&= \partial_u \Phi^{[n]}_{t/\eps} (\Psi^{[n]}_t (\check u_0))\;  (F^{[n]}(\Psi^{[n]}_t (\check u_0))- \dot \Psi^{[n]}_t(\check u_0) )-  \delta_{t/\eps}^{[n]}(\Psi^{[n]}_t (\check u_0)),
\end{align*}
we obtain  (\ref{eq:psiR}) provided that 
\begin{equation} \label{eq:defRtheta}
{\cal R}^{[n]}_{\theta} = -\left(\partial_u \Phi^{[n]}_\theta \right)^{-1} \delta^{[n]}_\theta. 
\end{equation}
This leads to the following pulled back equation 
\begin{align} \label{eq:pullback}
\dot v = \left(\partial_u \Phi^{[n]}_{t/\eps}\right)^{-1} \Big(f_{t/\eps} \circ \Phi^{[n]}_{t/\eps} (v)-\frac{1}{\eps} \partial_\theta \Phi^{[n]}_{t/\eps}(v)\Big) , \quad v(0) = (\Phi^{[n]}_0)^{-1}(u_0),
\end{align}
from which the solution of \eqref{eq:HOP} can be recovered as $u^\eps(t)=\Phi^{[n]}_{t/\eps}(v(t))$. 

\bigskip
Both the micro-macro  \eqref{eq:micromacro} and the pullback \eqref{eq:pullback} formulations will later prove to be useful for the design of uniformly accurate numerical schemes. 
They can be considered for stroboscopic and standard averagings,
yielding four different approaches (see  Table \ref{tab:4methods}).
\begin{table}[h!]
\begin{center}
\begin{tabular}{|c|c|c|}
\hline 
 & Stroboscopic averaging  & Standard averaging \\
  \hline 
Pullback method  & $\Phi^{[n]}_0 ={\rm id}_X$, ${\cal G}^{[n]} \equiv 0$ & $\langle \Phi^{[n]} \rangle ={\rm id}_X$, ${\cal G}^{[n]} \equiv 0$ \\  \hline 
Micro-macro method & $\Phi^{[n]}_0 ={\rm id}_X$, ${\cal R}^{[n]} \equiv 0$ & $\langle \Phi^{[n]} \rangle ={\rm id}_X$, ${\cal R}^{[n]} \equiv 0$ \\   \hline 
\end{tabular}
\caption{Designing choices for uniformly accurate averaging. \label{tab:4methods}}
\end{center}
\end{table}

The numerical resolution of either system \eqref{eq:micromacro} or system \eqref{eq:pullback} by a standard non-stiff method now requires estimates of time-derivatives of their right-hand side. As discussed above, this boils down to getting estimates of $\partial^{k}_t \delta^{[n]}_{t/\eps}$ for $k=0,\ldots$, which is the objective of next subsection.

\subsection{Bounds on the defect $\delta^\eps$}
\label{subsect:bounds}
We introduce the two  non-linear operators, which, to a map $(\theta,u)  \in \T\times {\cal K}_\rho \mapsto \varphi_\theta(u)$ with invertible partial derivative $\partial_u \langle \varphi_\theta \rangle$ associate
the mappings $(\theta,u)  \in \T \times {\cal K}_\rho \mapsto \Lambda(\varphi)_\theta(u)$ and $(\theta,u)  \in \T\times K_\rho \mapsto \Gamma^\eps(\varphi)_\theta(u)$  defined as 
\begin{eqnarray}
\label{Lambda}
\Lambda (\varphi)_{\theta}(u) &=& f_\theta \circ \varphi_{\theta} (u) - \frac{\partial \varphi_{\theta}}{\partial u} (u)\left(\frac{\partial \langle\varphi\rangle}{\partial u} (u)\right)^{-1} \left\langle f \circ \varphi\right\rangle(u), \\
\label{Gamma}
\Gamma^\eps (\varphi)_{\theta}(u) &=& u + \eps  \left(\int_0^{\theta}  \Lambda (\varphi)_{\xi}(u)  \, d \xi - \gamma \left\langle \int_0^{\theta}  \Lambda (\varphi)_{\xi}(u)  \, d \xi  \right\rangle \right),
\end{eqnarray}
with $\gamma=0$ for stroboscopic averaging and $\gamma=1$ for standard averaging. In the sequel, we shall constantly need the following assumption (more stringent than our previous Assumption \ref{ass:A}).

\begin{ass} \label{ass:f}
The function $(\theta,u) \mapsto f_\theta(u)$ is $p$-times ($p \geq 1$) continuously differentiable w.r.t.\,$\theta$.
Moreover, there exists $R>0$ such that for all $\theta \in \T$, the function $u \mapsto f_\theta(u)$ and all its derivatives $u \mapsto \partial^\nu_\theta f_\theta(u)$ up to order $\nu=p$ can be extended to  {\em analytic}  maps from $\K_{2R}$ to $X_\C$ that are {\it uniformly} bounded w.r.t.\,$(\theta,u)\in \T\times \K_{2R}$. More precisely, there exists a constant $M > 0$ such that for all $0 < \sigma \leq 3$,
\begin{align}
\max_{\nu=0,\ldots,p} \frac{\sigma^\nu}{\nu!} \|\partial^\nu_\theta f\|_{2R} \leq M.
\end{align}
\end{ass}
%%%%%
%\begin{remark}
%Note that if $f$ is assumed to be $C^\infty$ in $\theta$, then previous assumption with $p=+\infty$ implies its analyticity with respect to $\theta$ on the ball of radius $3$ centred at the origin.
%\end{remark}
Stating upper-bounds (polynomial and exponential in terms of $\eps$) of ${\cal G}^{[n]}$ or ${\cal R}^{[n]}$  now boils down to getting upper-bounds,  as well as regularity estimates, of the defect $\delta^{[n]}$. 
\begin{proposition} \label{prop:mta2}
Suppose that Assumptions \ref{ass:K} and \ref{ass:f} are satisfied and let $\eps_0:=R/(8M)$ and denote $r_n=\frac{R}{n+1}$ and $R_k=2R-kr_n$, $k=0,\ldots,n+1$. For all positive integer $n$ and positive $\eps$ such that $(n+1) \eps \leq \eps_0$, the maps $\Phi^{[k]}$, $k=0,\ldots,n$, defined by iteration (\ref{eq:phiiter}) are analytic on $\K_{R_k}$ and both $\Phi^{[n]}$ and its average $\langle \Phi^{[n]} \rangle$ are invertible with analytic inverse on $\K_R$.
%and the corresponding defect $\delta^\eps$ defined by (\ref{eq:defect}) 
Furthermore, all maps $\Phi^{[k]}$ are $(p+1)$-times continuously differentiable w.r.t.\,$\theta$ with derivatives analytic on $\K_{R_k}$ and upper-bounded:
\begin{eqnarray*}
\forall k=0,\ldots,n,  \quad \forall 1 \leq \nu \leq p+1, \qquad \|\partial^\nu_\theta \Phi^{[k]}\|_{R_k} \leq 8 \,  \eps \, M \, \nu!
\end{eqnarray*}
\end{proposition}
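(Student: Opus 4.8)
The statement combines two essentially independent assertions, and I would prove them separately.

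\textbf{Analyticity on the $\K_{R_k}$ and invertibility.} This part I would obtain by rerunning the inductive construction of \cite{ccmm} behind Theorem~\ref{th:mta}, keeping track of the domain more carefully. Starting from $\Phi^{[0]}={\rm id}_X$, analytic on $\K_{R_0}=\K_{2R}$, each application of \eqref{eq:phiiter} builds $\Phi^{[k+1]}$ out of $f_\tau\circ\Phi^{[k]}_\tau$, of $\partial_u\Phi^{[k]}_\tau$, $\partial_u\langle\Phi^{[k]}\rangle$ and their inverses, and of an integral over $\theta$; the only operation forcing a loss of domain is the Cauchy estimate producing the $u$-derivatives, and if one allows a loss of exactly $r_n=R/(n+1)$ per step then, since only $n$ steps are needed to reach $\Phi^{[n]}$ and $2R-R_n=nr_n<R$, every $\Phi^{[k]}$ stays analytic on $\K_{R_k}$, with $R_n=(n+2)r_n>R$. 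In particular $\Phi^{[n]}$ is analytic on $\K_R$, and the invertibility of $\Phi^{[n]}$ and $\langle\Phi^{[n]}\rangle$ with analytic inverse there is exactly Theorem~\ref{th:mta}. The same induction delivers the auxiliary estimates used below: $\|\Phi^{[k]}-{\rm id}\|_{R_k}=\bigo(\eps M)\le r_n/2$, hence $\|\partial_u\Phi^{[k]}-I\|_{R_k}$ and $\|\langle\partial_u\Phi^{[k]}\rangle^{-1}\|_{R_k}$ are under control and $\|F^{[k]}\|_{R_k}\le 2M$; and $\Phi^{[k]}$ maps $\K_{R_{k+1}}$ into $\K_{2R}$ at distance $\ge r_n/2$ from its boundary (indeed $\ge (k+\tfrac12)r_n$), so that $f$ and its $u$-derivatives can be evaluated on the image.

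\textbf{The $\theta$-derivative bounds.} Here I would argue by induction on $k$. For $k=0$ the bound is trivial (all $\theta$-derivatives of ${\rm id}_X$ vanish), and for $k=1$ one has $\partial_\theta\Phi^{[1]}_\theta=\eps(f_\theta-\langle f\rangle)$ and $\partial^\nu_\theta\Phi^{[1]}_\theta=\eps\,\partial^{\nu-1}_\theta f_\theta$ for $\nu\ge2$, which Assumption~\ref{ass:f} with $\sigma=3$ bounds by $2\eps M(\nu-1)!\,3^{-(\nu-1)}\le 8\eps M\nu!$. For the inductive step I differentiate \eqref{eq:phiiter} $\nu$ times in $\theta$: the terms ${\rm id}_X$ and $\eps G^{[k+1]}$ are independent of $\theta$ (for standard averaging $G^{[k+1]}$ is a $\theta$-average), the integral absorbs one derivative, and since $F^{[k]}$ does not depend on $\theta$ one gets
\[
\partial^\nu_\theta\Phi^{[k+1]}_\theta=\eps\,\partial^{\nu-1}_\theta\!\big(f_\theta\circ\Phi^{[k]}_\theta\big)-\eps\,\big(\partial_u\partial^{\nu-1}_\theta\Phi^{[k]}_\theta\big)\,F^{[k]}=\eps\,\partial^{\nu-1}_\theta\Lambda(\Phi^{[k]})_\theta,\qquad 1\le\nu\le p+1,
\]
with $\Lambda$ as in \eqref{Lambda}; it remains to bound $\|\partial^{\nu-1}_\theta\Lambda(\Phi^{[k]})\|_{R_{k+1}}$ by $8M\nu!$.

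The second term is handled directly: a Cauchy estimate in $u$ from $\K_{R_k}$ to $\K_{R_{k+1}}$ loses a factor $r_n^{-1}$, and with the induction hypothesis and $\|F^{[k]}\|\le 2M$ its contribution to $\partial^\nu_\theta\Phi^{[k+1]}$ is $\le 2M\,r_n^{-1}\cdot 8\eps M(\nu-1)!\cdot\eps$, in which the prefactor $\eps M r_n^{-1}=\eps M(n+1)/R\le\eps_0M/R=\tfrac18$ is exactly what keeps this a small multiple of $\eps M\nu!$. For the composition $\partial^{\nu-1}_\theta(f_\theta\circ\Phi^{[k]}_\theta)$ I would use the Fa\`a di Bruno formula in $\theta$, keeping the joint $\theta$-dependence of $f$: the ``diagonal'' term $(\partial^{\nu-1}_\theta f)\circ\Phi^{[k]}$ is bounded by $\|\partial^{\nu-1}_\theta f\|_{2R}\le M(\nu-1)!\,3^{-(\nu-1)}$ via Assumption~\ref{ass:f}, while every remaining term is a product of a mixed derivative $\partial^m_u\partial^l_\theta f$ ($m\ge1$), estimated on the image of $\K_{R_{k+1}}$ by a Cauchy estimate in $u$ of radius $\gtrsim r_n$, and of $m$ factors $\partial^{a_i}_\theta\Phi^{[k]}$ with $a_i\ge1$, each bounded by $8\eps M a_i!$ through the induction hypothesis. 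Since each such factor carries $8\eps M\le r_n$, the products $r_n^{-m}(8\eps M)^m\le1$ neutralise the $u$-Cauchy losses, and the residual combinatorial factors and the sum over $l$ are absorbed by the geometric weight $3^{-l}$ granted by Assumption~\ref{ass:f}. Adding the two contributions yields a recursion $\|\partial^\nu_\theta\Phi^{[k+1]}\|_{R_{k+1}}\le\eps M\nu!\,(c_0+c_1\,a_k)$, where $a_k=\sup_{1\le\nu\le p+1}\|\partial^\nu_\theta\Phi^{[k]}\|_{R_k}/(\eps M\nu!)$, $c_0$ is a fixed constant and $c_1<1$ (the smallness of $c_1$ again coming from $\eps(n+1)\le\eps_0$); since $a\mapsto c_0+c_1a$ is a contraction, the $a_k$ stay bounded uniformly in $k$ and $n$, and the calibration $\eps_0=R/(8M)$ together with the freedom $0<\sigma\le3$ in Assumption~\ref{ass:f} is precisely what makes the fixed point of this recursion $\le8$, closing the induction.

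\textbf{Main obstacle.} The delicate point is the composition estimate, and more precisely the verification that all constants entering the recursion $a_{k+1}\le c_0+c_1a_k$ are small enough for the final answer to be the explicit constant $8$, uniformly in the number $n$ of iterations. This requires tracking simultaneously the $u$-Cauchy losses (of order $(n+1)/R$), the $\eps$-smallness of $\Phi^{[k]}-{\rm id}$ and of its $\theta$-derivatives of order $\ge1$ (of order $\eps M\le r_n/8$), and the Fa\`a di Bruno combinatorics (tamed by the $3^{-\nu}$ decay of $\partial^\nu_\theta f$); the choices $\eps_0=R/(8M)$ and $\sigma\le 3$ are tuned exactly so that this balance produces the stated bound.
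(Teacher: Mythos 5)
Your first part (analyticity of $\Phi^{[k]}$ on $\K_{R_k}$, invertibility of $\Phi^{[n]}$ and $\langle\Phi^{[n]}\rangle$, and the auxiliary bounds $\|\Phi^{[k]}-{\rm id}\|_{R_k}\le r_n/2$, $\|F^{[k]}\|_{R_{k+1}}\le 2M$) is what the paper does: after reducing to stroboscopic averaging via Remark \ref{rem:rel}, it simply quotes these facts from \cite{ccmm}. Your setup for the $\theta$-derivatives is also the paper's: differentiate \eqref{eq:phiiter} to get $\partial_\theta\Phi^{[k+1]}_\theta=\eps\bigl(f_\theta\circ\Phi^{[k]}_\theta-\partial_u\Phi^{[k]}_\theta F^{[k]}\bigr)$, apply Leibniz and Fa\`a di Bruno, estimate $\partial^{j}_\theta\partial^{i}_u f$ on the image of $\K_{R_{k+1}}$ by Cauchy estimates with radius $(k+\frac12)r_n$, and exploit $\eps M\le r_n/8$.

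The gap is in how you close the induction. The Fa\`a di Bruno expansion produces terms containing products $\prod_j(\partial^{j}_\theta\Phi^{[k]})^{i_j}$ of several $\theta$-derivatives of $\Phi^{[k]}$, so the recursion is not affine in your scalar $a_k=\sup_{\nu}\|\partial^\nu_\theta\Phi^{[k]}\|_{R_k}/(\eps M\,\nu!)$: it involves $a_k^{|\mathbf i|}$ together with the weights $\binom{\nu}{q}C^{q}_{\mathbf i}\prod_j(j!)^{i_j}$. If you feed the crude bound $\|\partial^{j}_\theta\Phi^{[k]}\|\le a_k\,\eps M\,j!$ into these sums, then even after the smallness $\eps M\le r_n/8$ neutralises the $u$-Cauchy losses and the weight $3^{-l}$ tames the $\theta$-derivatives of $f$, the combinatorial total is of size $\nu!$ times a factor growing geometrically in $\nu$ (this is visible in the paper's own Lemma \ref{lem:basicdiff}, where the analogous sum yields $Q_\nu(\xi)\sim 2^{\nu+1}\nu!$); hence an induction pinned at the fixed constant $8$, uniform in $\nu\le p+1$, $k\le n$ and $n$, cannot close once $p$ is moderately large, and no affine contraction $a_{k+1}\le c_0+c_1a_k$ with $c_1<1$ is actually produced. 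The paper avoids precisely this loss by not collapsing the family of derivative bounds to a sup: it sets $\alpha_{k,\nu}=\|\partial^\nu_\theta\Phi^{[k]}\|_{R_k}/(\eps M)$, introduces the exponential generating functions $g_k(\xi)=\sum_{\nu\ge1}\alpha_{k,\nu}\xi^\nu/\nu!$, converts the Fa\`a di Bruno recursion into the functional inequality
\begin{equation*}
g_{k+1}(\xi)\;\le\;3\xi+\frac14\int_0^\xi g_k(s)\,ds+\int_0^\xi \exp\Bigl(s+\frac{g_k(s)}{8k+4}\Bigr)ds,
\end{equation*}
proves by induction (using crucially the $k$-dependent damping $1/(8k+4)$) that $g_k(1)\le 8$ for all $k$, and only then reads off $\alpha_{k,\nu}\le \nu!\,g_k(1)\le 8\,\nu!$. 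Some device of this kind — propagating a quantity summed over all derivative orders rather than your pointwise $a_k$ — is the missing ingredient; it is exactly the ``main obstacle'' you flag, but your proposal asserts rather than establishes that the constants balance out to the stated bound $8$.
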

\begin{proof}
Owing to Remark \ref{rem:rel}, we can restrict ourselves to the case of stroboscopic averaging, for which the (rather technical) proof is given in Appendix section.
\end{proof}
%%%%%%%%%%%%%%%%%%%%%%%%%%%%%%%%%%%%%%%%%%%%%%%%%
%Main theorem on the defect delta
%%%%%%%%%%%%%%%%%%%%%%%%%%%%%%%%%%%%%%%%%%%%%%%%%
\begin{theorem} \label{th:delta}
Suppose that Assumptions \ref{ass:K} and \ref{ass:f} are satisfied and let $\eps_0:=R/(8M)$. For all positive integer $n$ and positive $\eps$ such that $(n+1) \eps \leq \eps_0$, let the map $\Phi^{[n]}$ and $F^{[n]}$ be defined by iteration (\ref{eq:phiiter}). Then the defect $\delta^{[n]}$ defined by  (\ref{eq:defect}) is $p$-times differentiable and satisfies the following estimate:
\begin{eqnarray*}
\| \delta^{[n]}\|_{R,p} \leq  2 \, M \, \Big(2 \, {\cal Q}_p \, (n+1) \frac{\eps}{\eps_0}\Big)^{n},
\end{eqnarray*}
where ${\cal Q}_p  \leq 5+ 2^{p+1} e^{5/2} \, p!$.
\end{theorem}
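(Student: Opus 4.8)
The strategy is to iterate the defect recursion induced by (\ref{eq:phiiter}) in the spirit of Theorem \ref{th:mta}, but now tracking all $\theta$-derivatives up to order $p$ simultaneously by working with the norms $\|\cdot\|_{\rho,p}$. The starting point is the identity that expresses $\delta^{[k+1]}$ in terms of $\delta^{[k]}$: differentiating (\ref{eq:phiiter}) and using the definitions (\ref{Lambda})--(\ref{Gamma}), one gets $\Phi^{[k+1]} = \Gamma^\eps(\Phi^{[k]})$ and a relation of the schematic form
\begin{align*}
\delta^{[k+1]}_\theta = \big(\partial_u \Phi^{[k]}_\theta - \mathrm{id}\big)\,\big(\text{affine in } \delta^{[k]}\big) + \big(\text{terms involving } \partial_u\Phi^{[k]}_\theta - \partial_u\Phi^{[k+1]}_\theta\big)F^{[k]} + \ldots,
\end{align*}
so that, morally, $\|\delta^{[k+1]}\|_{R_{k+1},p} \lesssim \eps \, C_p \, \|\partial_\theta,\partial_u \text{-data}\|\cdot \|\delta^{[k]}\|_{R_k,p}$, the gain of one power of $\eps$ coming from the $\eps$ in front of the integral in $\Gamma^\eps$. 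Iterating from $k=0$ to $n$, with $\|\delta^{[0]}\|$ bounded by $2M$ (as in Theorem \ref{th:mta}, since $\delta^{[0]}_\theta = \tfrac{1}{\eps}\partial_\theta\mathrm{id} + \langle f\rangle - f_\theta = \langle f\rangle - f_\theta$), one collects $n$ factors, each contributing a constant $2\,{\cal Q}_p$, a factor $(n+1)$ (from the shrinking-domain Cauchy estimates, exactly as the $2(n+1)$ in Theorem \ref{th:mta}(iii)), and a factor $\eps/\eps_0$, which yields the claimed bound $2M\big(2{\cal Q}_p(n+1)\eps/\eps_0\big)^n$.

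The first reduction, as in Proposition \ref{prop:mta2}, is to restrict to stroboscopic averaging ($\gamma=0$, $G^{[k]}\equiv 0$): by Remark \ref{rem:rel} the standard-averaging defect is $\tilde\delta^{[n]}_\theta = \delta^{[n]}_\theta\circ\langle\Phi^{[n]}\rangle^{-1}$, and precomposition with an analytic map bounded on $\K_R$ does not affect the $\|\cdot\|_{R,p}$-estimate (the $\theta$-derivatives act only on the outer $\delta^{[n]}$, and $\langle\Phi^{[n]}\rangle^{-1}$ maps $\K_R$ into $\K_{2R}$ by Proposition \ref{prop:mta2}(i)-type control). Next I would install the quantitative toolbox: (a) Cauchy estimates for the $u$-derivatives of analytic maps on the nested sets $\K_{R_k}$, trading a loss of domain width $r_n = R/(n+1)$ for a factor $(n+1)/R$ per derivative; (b) a Leibniz/Faà di Bruno bookkeeping lemma controlling $\partial^\nu_\theta$ of compositions $f_\theta\circ\varphi_\theta$ and of products, which is where the combinatorial constant $2^{p+1}e^{5/2}p!$ in ${\cal Q}_p$ is generated — the $\sigma^\nu/\nu!$-weighted bound on $\partial^\nu_\theta f$ in Assumption \ref{ass:f} is tailored precisely so that these sums converge with an explicit constant; (c) the bounds from Proposition \ref{prop:mta2}, namely $\|\partial^\nu_\theta\Phi^{[k]}\|_{R_k}\le 8\eps M\nu!$ for $1\le\nu\le p+1$ and $\|\Phi^{[k]}-\mathrm{id}\|\le$ small, which give $\partial_u\Phi^{[k]}_\theta - \mathrm{id} = \bigo(\eps)$ and hence the extra $\eps$-smallness multiplying $\delta^{[k]}$ in the recursion, and also make $(\partial_u\langle\Phi^{[k]}\rangle)^{-1}$ bounded by a Neumann series.

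With these in place, the core of the argument is the inductive estimate: assuming $\|\delta^{[k]}\|_{R_k,p}\le 2M(2{\cal Q}_p(n+1)\eps/\eps_0)^k$, expand $\delta^{[k+1]}$ via the recursion, apply Leibniz in $\theta$ to each term, bound each resulting piece on $\K_{R_{k+1}}$ using Cauchy estimates to pass from $\K_{R_k}$ (losing the factor $(n+1)$), and check that the accumulated constants fit under $2{\cal Q}_p$ with ${\cal Q}_p\le 5 + 2^{p+1}e^{5/2}p!$ — the summand $5$ absorbing the purely "algebraic" contributions ($\|F^{[k]}\|\le 2M$, the Neumann-series constants, the $8\eps M$ factors against $\eps_0 = R/8M$) and the $2^{p+1}e^{5/2}p!$ summand absorbing the $\theta$-differentiation combinatorics. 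The main obstacle I anticipate is bookkeeping-level rather than conceptual: one must verify that differentiating the product $\partial_u\Phi^{[k]}_\theta\cdot F^{[k]}$ and the composition $f_\theta\circ\Phi^{[k]}_\theta$ up to order $p$ in $\theta$ does not generate a constant worse than ${\cal Q}_p$ — in particular that the interplay between the weights $\sigma^\nu/\nu!$ (choosing $\sigma$ of order the domain width, here $\sigma\le 3$) and the $\nu!$ in $\|\partial^\nu_\theta\Phi^{[k]}\|_{R_k}\le 8\eps M\nu!$ telescopes correctly, and that the loss of analyticity radius over $n$ steps (from $2R$ down to $R$) is exactly compensated, step by step, by the choice $R_k = 2R - kR/(n+1)$ so that $\Phi^{[n]}$ and $\delta^{[n]}$ are still controlled on the full $\K_R$. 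Since the analogue for $n=0$ derivatives is already done in \cite{ccmm}, the honest work is confined to threading the $\theta$-Leibniz estimates through that existing machinery, which is why — as with Proposition \ref{prop:mta2} — the detailed computation is best relegated to the appendix.
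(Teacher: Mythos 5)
Your plan is essentially the paper's own proof: reduce to stroboscopic averaging via Remark \ref{rem:rel}, invoke Proposition \ref{prop:mta2} so that the iterates satisfy the hypotheses of the Lipschitz/contraction estimates for $\Lambda$ and $\Gamma^\eps$ in the $\|\cdot\|_{\rho,\nu}$-norms (the Leibniz/Fa\`a di Bruno--Cauchy bookkeeping you describe is precisely Lemma \ref{lem:basicdiff}, which generates the constant ${\cal Q}_p$), and iterate across the shrinking domains $\K_{R_k}$ to collect $n$ factors of size $\mathcal{O}\big({\cal Q}_p (n+1)\eps/\eps_0\big)$. The only cosmetic difference is that the paper runs the $\Gamma^\eps$-contraction on the differences $\Phi^{[k]}-\Phi^{[k-1]}$ (telescoping down to $\|\Phi^{[1]}-\Phi^{[0]}\|_{R_1,p}\leq 2\eps M$) and converts to the defect once at the end through the exact identity $\delta^{[n]}=\Lambda(\Phi^{[n-1]})-\Lambda(\Phi^{[n]})$ --- the clean form of your ``schematic'' recursion, since in the stroboscopic case $\Phi^{[k+1]}_\theta-\Phi^{[k]}_\theta=-\eps\int_0^\theta\delta^{[k]}_\tau\,d\tau$ --- whereas you propose the equivalent induction on $\delta^{[k]}$ directly.
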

For the proof of Theorem \ref{th:delta}, we need the following lemma with proof postponed in appendix.
%%%%%%%%%%%%%%%%%%%%%%%%%%%%%%%%%%%%%%%%%%%%%%%%%
\begin{lemma} \label{lem:basicdiff}
Let $0 < \delta < \rho \leq 2R$ and consider two periodic, near-identity mappings 
$(\theta,u) \in \T \times {\cal K}_\rho \mapsto \varphi_\theta(u)$ and $(\theta,u) \in \T \times {\cal K}_\rho \mapsto \tilde \varphi_\theta(u)$, analytic on $K_\rho$ and satisfying 
%%%%%%%%%%%%%%%%%%%%%%%%%%%%%%%%%%%%%%%%%%%%%%%%%
\begin{align} \label{eq:nearidentity}
\|\varphi-{\rm Id}\|_\rho \leq \frac{\delta}{2} \quad \mbox {and} \quad\|\tilde \varphi-{\rm Id}\|_\rho \leq \frac{\delta}{2}.
\end{align}
Let $\eps_0:=R/(8M)$ and suppose that they are $p$-times continuously differentiable w.r.t.\,$\theta$ and satisfy the estimates
$$
\forall \, 0 < \eps < \eps_0, \, \forall \, 1 \leq \nu \leq p, \quad \max\Big(\|\partial^\nu_\theta \varphi\|_\rho, \, \|\partial^\nu_\theta \tilde \varphi\|_\rho\Big) \leq \beta_\nu:= 8 \, M \, \eps \, \nu! 
$$
%%%%%%%%%%%%%%%%%%%%%%%%%%%%%%%%%%%%%%%%%%%%%%%%%
Then the following estimates hold true 
%%%%%%%%%%%%%%%%%%%%%%%%%%%%%%%%%%%%%%%%%%%%%%%%%
\begin{align*}
\forall \, 0 < \eps < \eps_0, \, \forall \, 0 \leq \nu \leq p, \qquad & \|\Lambda (\varphi) - \Lambda (\tilde \varphi)\|_{\rho-\delta,\nu} \leq \frac{\xi \, Q_\nu(\xi)}{8 \, \eps}  \|\varphi - \tilde \varphi\|_{\rho,\nu},
\\
 & \|\Gamma^\eps (\varphi) - \Gamma^\eps (\tilde \varphi)\|_{\rho-\delta,\nu+1}
\leq  \frac{\xi \, Q_\nu(\xi)}{8}  \|\varphi - \tilde \varphi\|_{\rho,\nu},
\end{align*}
where $Q_\nu(\xi) \leq 1+2\xi + 2^{\nu+1} \sqrt{e} \, e^{\xi} \, \nu!$ and $\xi=\frac{16 M \eps}{\delta}$. 
\end{lemma}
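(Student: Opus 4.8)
The plan is to deduce the bound on $\Gamma^\eps(\varphi)-\Gamma^\eps(\tilde\varphi)$ from the bound on $\Lambda(\varphi)-\Lambda(\tilde\varphi)$, and to prove the latter by a telescoping argument whose only substantial ingredient is a composition-difference estimate for $\partial_\theta^\nu(f\circ\varphi-f\circ\tilde\varphi)$. The reduction rests on the observation that $\langle\Lambda(\varphi)\rangle=0$ for every admissible $\varphi$: averaging commutes with $\partial_u$, so $\langle\partial_u\varphi\rangle=\partial_u\langle\varphi\rangle$ and the subtracted term in \eqref{Lambda} averages to $\langle f\circ\varphi\rangle$. Hence $g:=\Lambda(\varphi)-\Lambda(\tilde\varphi)$ also has zero average, and in $\Gamma^\eps(\varphi)_\theta(u)-\Gamma^\eps(\tilde\varphi)_\theta(u)=\eps\bigl(\int_0^\theta g_\xi(u)\,d\xi-\gamma\langle\int_0^\theta g_\xi\,d\xi\rangle(u)\bigr)$ the two $u$-terms cancel. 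For the $\nu=0$ component I would use $\|\int_0^\theta g_\xi\,d\xi\|\le\min(\theta,1-\theta)\|g\|\le\tfrac12\|g\|$, which holds precisely because $\int_0^1 g=0$, giving $\le\|g\|$ after subtracting the $\gamma$-average; for $1\le\mu\le\nu+1$, $\partial_\theta^\mu$ annihilates the $\gamma$-correction and leaves $\eps\,\partial_\theta^{\mu-1}g$. Altogether $\|\Gamma^\eps(\varphi)-\Gamma^\eps(\tilde\varphi)\|_{\rho-\delta,\nu+1}\le\eps\,\|\Lambda(\varphi)-\Lambda(\tilde\varphi)\|_{\rho-\delta,\nu}$, so the second estimate is an immediate corollary of the first.

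For $\Lambda(\varphi)-\Lambda(\tilde\varphi)$ I would write $\Lambda(\varphi)=A(\varphi)-B(\varphi)\,C(\varphi)^{-1}D(\varphi)$ with $A(\varphi)_\theta=f_\theta\circ\varphi_\theta$, $B(\varphi)_\theta=\partial_u\varphi_\theta$, $C(\varphi)=\partial_u\langle\varphi\rangle$, $D(\varphi)=\langle f\circ\varphi\rangle$, and expand the difference by telescoping into four summands, each carrying exactly one difference factor, using $C(\varphi)^{-1}-C(\tilde\varphi)^{-1}=C(\varphi)^{-1}\bigl(C(\tilde\varphi)-C(\varphi)\bigr)C(\tilde\varphi)^{-1}$ for the inverse term. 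The frozen factors are handled by soft estimates on $\K_{\rho-\delta/2}$: Cauchy estimates in $u$ (loss $\le\delta/2$), together with Assumption~\ref{ass:f} and the near-identity and $\theta$-derivative hypotheses $\|\varphi-{\rm Id}\|_\rho\le\delta/2$, $\|\partial_\theta^\nu\varphi\|_\rho\le 8M\eps\,\nu!$, bound $B$, $D$ and their $\theta$-derivatives by $M$-multiples of $\nu!$, while a Neumann series (using the near-identity property) bounds $C(\varphi)^{-1}$ and $C(\tilde\varphi)^{-1}$. The difference factors $B(\varphi)-B(\tilde\varphi)=\partial_u(\varphi-\tilde\varphi)$ and $C(\varphi)-C(\tilde\varphi)=\partial_u\langle\varphi-\tilde\varphi\rangle$ each cost one Cauchy factor $2/\delta=\xi/(8M\eps)$ off $\|\varphi-\tilde\varphi\|_{\rho,\nu}$; $A(\varphi)-A(\tilde\varphi)$ and $D(\varphi)-D(\tilde\varphi)=\langle A(\varphi)-A(\tilde\varphi)\rangle$ need the composition-difference estimate below. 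Multiplying these out and collecting constants gives $\|\Lambda(\varphi)-\Lambda(\tilde\varphi)\|_{\rho-\delta,\nu}\le\frac{\xi}{8\eps}Q_\nu(\xi)\,\|\varphi-\tilde\varphi\|_{\rho,\nu}$, with the $1+2\xi$ originating from the $B$- and $C$-difference terms and the exponential term from the composition part.

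The core ingredient, and the main obstacle, is the composition-difference estimate: for $0\le\nu\le p$ there is an absolute constant $c$ with
\[
\|\partial_\theta^\nu(f\circ\varphi-f\circ\tilde\varphi)\|_{\rho-\delta}\ \le\ c\,\frac{M}{\delta}\,2^{\nu}\sqrt e\,e^{\xi}\,\nu!\;\|\varphi-\tilde\varphi\|_{\rho,\nu}.
\]
I would prove it by writing $f_\theta\circ\varphi_\theta-f_\theta\circ\tilde\varphi_\theta=\bigl(\int_0^1\partial_u f_\theta(\psi_\theta^s)\,ds\bigr)(\varphi_\theta-\tilde\varphi_\theta)$ with $\psi_\theta^s=\tilde\varphi_\theta+s(\varphi_\theta-\tilde\varphi_\theta)\in\K_{\rho-\delta/2}$, applying the Leibniz rule in $\theta$ to this product and the Fa\`{a}~di~Bruno formula to $\partial_\theta^j(\partial_u f_\theta\circ\psi_\theta^s)$. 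Each inner derivative $\partial_\theta^i\partial_u^{k+1}f$ is bounded on $\K_{\rho-\delta/2}$ by a Cauchy estimate (loss $\delta/2$, $k+1$ derivatives in $u$) applied to the bound $\|\partial_\theta^i f\|_{2R}\le M\,i!/\sigma^i$ furnished by Assumption~\ref{ass:f}, and each $\theta$-derivative of $\psi^s$ (hence of $\varphi,\tilde\varphi$) by $8M\eps\,\mu!$; the $k$ factors of size $8M\eps$ combine with $(2/\delta)^{k+1}$ into $(2/\delta)\,\xi^k$, and summing over $k$ against the Fa\`{a}~di~Bruno multinomials produces the $e^\xi$, the Leibniz split over $\nu$ the $2^\nu$, the factor $m!$ in Fa\`{a}~di~Bruno the $\nu!$, and an optimized choice of the free parameter $\sigma\in\,]0,3]$ the $\sqrt e$. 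The bookkeeping-heavy heart of the proof is to keep every combinatorial constant explicit so that, after the telescoping of the previous paragraph, they collapse exactly into $Q_\nu(\xi)\le 1+2\xi+2^{\nu+1}\sqrt e\,e^\xi\,\nu!$; the delicate point within it is the quantitative control of $\|C(\varphi)^{-1}\|$ from the near-identity hypothesis together with the Cauchy loss. Everything else is routine once these atomic estimates are in place.
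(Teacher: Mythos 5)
Your proposal is correct and follows essentially the same route as the paper: reduce the $\Gamma^\eps$ estimate to the $\Lambda$ estimate by integration in $\theta$ (using that the $\gamma$-correction is $\theta$-independent), telescope $\Lambda(\varphi)-\Lambda(\tilde\varphi)$ into a composition-difference part and a $\partial_u\varphi\,F$ part handled with $\|\langle\partial_u\varphi\rangle^{-1}\|\le 2$ and the resolvent identity, and treat the composition part by Leibniz/Fa\`a di Bruno with Cauchy estimates in $u$, the multi-index sums being collected into the $2^{\nu+1}\sqrt{e}\,e^{\xi}\,\nu!$ bound. The only real differences are minor: you use the integral mean-value form $f\circ\varphi-f\circ\tilde\varphi=\bigl(\int_0^1\partial_u f(\psi^s)\,ds\bigr)(\varphi-\tilde\varphi)$ where the paper applies Fa\`a di Bruno to both compositions and distributes the difference through a discrete polarization identity, and note that in the paper the factor $\sqrt{e}$ comes from a Cauchy estimate at radius $1/2$ on the generating function of the Fa\`a di Bruno sums rather than from optimizing $\sigma$.
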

%%%%%%%%%%%%%%%%%%%%%%%%%%%%%%%%%%
\begin{proof}[Proof of Theorem \ref{th:delta}]
Again, owing to Remark \ref{rem:rel}, we limit ourselves to the case of stroboscopic averaging. By virtue of Proposition \ref{prop:mta2}, the iterates $\Phi^{[k]}$, $k=0,\ldots,n$ are $p+1$ times differentiable and satisfy the assumptions of Lemma  \ref{lem:basicdiff}. 
By definition, we have for  $n \geq 1$
%%%%%%%%%%%%%%%%%%%%%%%%%%%%%%%%%%%%%%%%%%%%%%%%%
$$
\delta^{[n]}_{\theta}= \Lambda (\Phi^{[n-1]})_{\theta}-\Lambda (\Phi^{[n]})_{\theta}.
$$
%%%%%%%%%%%%%%%%%%%%%%%%%%%%%%%%%%%%%%%%%%%%%%%%%
Hence, using Lemma  \ref{lem:basicdiff} with $\delta=\frac{R}{n+1}$ and successively $\rho=R_n$, $\rho=R_{n-1}$, ..., $\rho=R_1$ yields with $\xi=\frac{16 M \eps (n+1)}{R}$:
%%%%%%%%%%%%%%%%%%%%%%%%%%%%%%%%%%%%%%%%%%%%%%%%%
\begin{align*}
\|\delta^{[n]}\|_{R,p}  
&\leq 
 \frac{1}{8 \eps} \xi Q_p(\xi)  \|\Phi^{[n]}-\Phi^{[n-1]}\|_{R_n,p} \\
& =
\frac{1}{8 \eps} \xi Q_p(\xi)  
\left\|\Gamma^\eps\left(\Phi^{[n-1]}\right)-\Gamma^\eps\left(\Phi^{[n-2]}\right)\right\|_{R_{n},p}
\\
&
\leq 
\frac{1}{\eps} \Big(\frac{\xi Q_p(\xi)}{8}\Big)^2
\left\|\Gamma^\eps\left(\Phi^{[n-2]}\right)
-\Gamma^\eps\left(\Phi^{[n-3]}\right)\right\|_{R_{n-1},p} \\
& \leq
\ldots 
\leq \frac{1}{\eps} \Big(\frac{\xi Q_p(\xi)}{8}\Big)^{n} \|\Phi^{[1]}-\Phi^{[0]}\|_{R_{1},p}.
\end{align*}
%%%%%%%%%%%%%%%%%%%%%%%%%%%%%%%%%%%%%%%%%%%%%%%%%
In addition, a direct estimation gives 
%%%%%%%%%%%%%%%%%%%%%%%%%%%%%%%%%%%%%%%%%%%%%%%%%
$
\|\Phi^{[1]}-\Phi^{[0]}\|_{R_1,p} \leq 2 \eps  \, M,
$
%%%%%%%%%%%%%%%%%%%%%%%%%%%%%%%%%%%%%%%%%%%%%%%%%
so that 
$$
\|\delta^{[n]}\|_{R,p} \leq 2 M \Big(\frac{\xi Q_p(\xi)}{8}\Big)^n.
$$
Now, $(n+1) \eps \leq \eps_0$, so that $\xi \leq 2$ and  we have 
$$
Q_{p}(\xi) \leq {\cal Q}_p :=  1+4+2^{p+1} \, p! e^{5/2} \leq 5 + 13 \, 2^{p+1} \, p! 
$$
\end{proof}
%%%%%%%%%%%%%%%%%%%%%%%%%%%%%%%%%%%%%%%%%%%%%%%%
\subsection{Uniform accuracy of numerical schemes} 
\label{subsect:regularity}
We are now in position to prove the main result of Section \ref{sect:tua} which permits to apply a non-stiff standard integrator to the pullback and micro-macro formulations.
We consider the numerical solution by a standard scheme of the pullback problem \eqref{eq:pullback}
and of the micro-macro problem \eqref{eq:micromacro}.
\begin{theorem} \label{thm:regularity}
Suppose that Assumptions \ref{ass:K} and \ref{ass:f} are satisfied for a given $p\ge 1$ and let $\eps_0:=R/(8M)$. For problems \eqref{eq:pullback} and \eqref{eq:micromacro} with $0 < (n+1) \eps \leq \eps_0$, consider an approximation $(v_k)_{0 \leq k \leq N}$ of \eqref{eq:pullback} or $(v_k,w_k)_{0 \leq k \leq N}$ of \eqref{eq:micromacro} %on a $(N+1)$-points grid
%such that 
%$$
%\|w_{k+1} \|_X \leq (1+\alpha \Delta t) \|w_{k} \|_X + \beta
%$$
%for some positive constants $\alpha$ et $\beta$, and assume that they are 
at times $t_0=0<t_1<\ldots<t_N=1$ obtained by a standard {\em stable} method of {\em nonstiff order $q$}, i.e. a method which exhibits order $q$ of convergence when applied to \eqref{eq:HOP} with $\eps=1$. Then, these approximations yield a uniformly accurate approximation of order $r=\min(p,q,n)$ of the solution of \eqref{eq:HOP}. Precisely, we have for the pullback method,
$$
\max_{0 \leq k \leq N} \|\Phi^{[n]}(v_k) - u^\eps(t_k)\|_X \leq C\Delta t^r,
$$
 and for the micro-macro method,
$$
\max_{0 \leq k \leq N} \|\Phi^{[n]}(v_k) + w_k - u^\eps(t_k)\|_X \leq C\Delta t^r,
$$
where the constant $C$ is {\em independent of  $\eps$} and the time grid size $\Delta t=\max_k (t_{k+1}-t_k)$ assumed small enough.
\end{theorem}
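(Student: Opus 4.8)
The strategy is to establish that both the pullback equation \eqref{eq:pullback} and the micro-macro system \eqref{eq:micromacro} have right-hand sides that are, uniformly in $\eps\in]0,1]$, smooth enough (i.e. bounded together with their first $r$ time-derivatives) on the relevant region of phase space, so that the defining hypothesis of a ``nonstiff order $q$'' method applies. The classical convergence theory of one-step (or multistep) methods then gives local errors of size $\mathcal{O}(\Delta t^{q+1})$ and, via stability, a global error $\mathcal{O}(\Delta t^q)$ for the numerical approximation of $v$ (resp. $(v,w)$), with a constant depending only on the uniform bounds just established — hence independent of $\eps$. Composing with $\Phi^{[n]}$ (which is Lipschitz on $\K_R$ with bound independent of $\eps$ by Proposition~\ref{prop:mta2} and Theorem~\ref{th:mta}) transfers this to an estimate on $\Phi^{[n]}(v_k)$, resp. $\Phi^{[n]}(v_k)+w_k$.

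The work splits into three blocks. \emph{(a) The pullback RHS.} Writing the right-hand side of \eqref{eq:pullback} as $-(\partial_u\Phi^{[n]}_{t/\eps})^{-1}(\delta^{[n]}_{t/\eps}\circ\ldots) + F^{[n]}$ using the identity $(\partial_u\Phi^{[n]}_\theta)^{-1}(f_\theta\circ\Phi^{[n]}_\theta - \tfrac1\eps\partial_\theta\Phi^{[n]}_\theta) = F^{[n]} + \mathcal{R}^{[n]}_\theta$ with $\mathcal{R}^{[n]}_\theta = -(\partial_u\Phi^{[n]}_\theta)^{-1}\delta^{[n]}_\theta$ (cf. \eqref{eq:defRtheta}), the point is that $F^{[n]}$ is autonomous and smooth (bounded by $2M$, Theorem~\ref{th:mta}), while the remainder carries a factor $\delta^{[n]}$. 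Differentiating $t\mapsto \mathcal{R}^{[n]}_{t/\eps}(v(t))$ up to $r$ times produces, by the chain rule, terms of the form $\eps^{-j}(\partial_\theta^j$ of stuff$)$ for $j\le r\le p$; each $\partial_\theta^j$ hitting $\delta^{[n]}$ is controlled by $\|\delta^{[n]}\|_{R,p}$ from Theorem~\ref{th:delta}, which is $\mathcal{O}((n+1)^n(\eps/\eps_0)^n)$ — i.e. $\mathcal{O}(\eps^n)$ for fixed $n$. Since we differentiate at most $r\le n$ times, the worst power is $\eps^{-r}\cdot\eps^{n} = \eps^{n-r}$, which is bounded because $r\le n$. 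Derivatives falling on $(\partial_u\Phi^{[n]})^{-1}$ or on $v$ itself are handled using the $\theta$-derivative bounds $\|\partial^\nu_\theta\Phi^{[n]}\|_{R_n}\le 8\eps M\nu!$ of Proposition~\ref{prop:mta2} (note the extra factor $\eps$ there cancels further $\eps^{-1}$'s) together with a bootstrap bound on $\|v^{(j)}(t)\|$. \emph{(b) The micro-macro RHS.} Here the $v$-equation is autonomous with field $F^{[n]}$, hence trivially smooth. For the $w$-equation one rewrites the bracket using the defect: $f_{t/\eps}(\Phi^{[n]}_{t/\eps}(v)+w) - (\tfrac1\eps\partial_\theta\Phi^{[n]}_{t/\eps}+\partial_u\Phi^{[n]}_{t/\eps}F^{[n]})(v) = f_{t/\eps}(\Phi^{[n]}_{t/\eps}(v)+w) - f_{t/\eps}(\Phi^{[n]}_{t/\eps}(v)) - \delta^{[n]}_{t/\eps}(v)$, so the singular-looking $\tfrac1\eps\partial_\theta$ term disappears and one is left with a genuinely $w$-Lipschitz term (with constant $M/R$, uniform in $\eps$) plus the small defect. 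A Gronwall argument then shows $w(t)$ stays $\mathcal{O}(\eps^{\min(n,?)})$-small — in particular bounded, so $\Phi^{[n]}_{t/\eps}(v)+w$ stays in $\K_R$ — and the same chain-rule-plus-Theorem~\ref{th:delta} bookkeeping as in (a) shows $w$ and its first $r$ time-derivatives are uniformly bounded. \emph{(c) Assembling the convergence estimate.} Invoke the standard Lady-Windermere-fan / discrete-Gronwall argument for a stable method of nonstiff order $q$: local error $\le C_1\Delta t^{q+1}$ where $C_1$ depends on a bound for the $(q+1)$-st derivative of the exact solution of the ODE being integrated — and by (a),(b) that derivative bound is $\eps$-uniform provided $q\le p$ and $q\le n$ (whence $r=\min(p,q,n)$). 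Summation over $N\approx 1/\Delta t$ steps, multiplied by the stability constant, gives $\|v_k - v(t_k)\|\le C\Delta t^q$, resp. the analogous bound for $(v_k,w_k)$. Finally $\|\Phi^{[n]}(v_k) - u^\eps(t_k)\|_X = \|\Phi^{[n]}(v_k)-\Phi^{[n]}(v(t_k))\|\le \mathrm{Lip}(\Phi^{[n]})\,\|v_k-v(t_k)\|$ (using that $v(t)$ exactly reconstructs $u^\eps$ via \eqref{eq:phipsin} for the pullback, and via $u^\eps = \Phi^{[n]}_{t/\eps}(v)+w$ for micro-macro, which follows from the derivations preceding \eqref{eq:pullback} and \eqref{eq:micromacro}), which yields $C\Delta t^r$ with $C$ independent of $\eps$.

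The main obstacle is block (a)/(b): carefully carrying out the chain rule for $\partial_t^j(\mathcal{R}^{[n]}_{t/\eps}(v(t)))$ and $\partial_t^j$ of the $w$-field, and verifying that every negative power $\eps^{-j}$ produced by differentiating the $\theta=t/\eps$ dependence is absorbed either by the $\eps^n$ from $\|\delta^{[n]}\|_{R,p}$ (Theorem~\ref{th:delta}) or by the explicit $\eps$ in the $\theta$-derivative bounds of Proposition~\ref{prop:mta2} — this is a bookkeeping argument over Faà di Bruno-type sums, and the constraint $r\le n$ is exactly what makes it close. A secondary technical point is the bootstrap that keeps $v(t)\in\K_{R}$ (or a slightly smaller ball) and $w(t)$ small for all $t\in[0,1]$, uniformly in $\eps$, which one gets from $\|F^{[n]}\|_R\le 2M$, $\|\mathcal{R}^{[n]}\|_R = \mathcal{O}(\eps^n)$ and Gronwall. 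Everything else is the off-the-shelf convergence theory of nonstiff integrators.
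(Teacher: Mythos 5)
Your proposal is correct and follows essentially the same route as the paper's proof: reduce to error estimates for the transformed variables via the ($\eps$-uniform) Lipschitz bound on $\Phi^{[n]}$, write the pullback field as $F^{[n]}+{\cal R}^{[n]}_{t/\eps}$ and control $\partial_t^\nu\partial_u^\xi{\cal R}^{[n]}_{t/\eps}$ by $C\eps^{n-\nu}$ via Theorem \ref{th:delta} and Leibniz/Fa\`a di Bruno, rewrite the micro-macro $w$-equation with the defect as an $\bigo(\eps^n)$ source so that Gronwall bounds $w$ and its time-derivatives uniformly, and then invoke stability plus nonstiff order $q$. The only points the paper makes explicit that your sketch leaves to ``off-the-shelf'' theory are the use of $\langle b^{[n]}(\cdot,t)\rangle=0$ to gain one extra power of $\eps$ on $w$ itself, the bound on derivatives up to order $\tilde r+1$ (not just $r$) of the exact $w$, and a discrete stability recursion showing the numerical $w_k$ stays of size $\eps^n$ so that the right-hand side stays smooth along the numerical trajectory; these are refinements of, not departures from, your argument.
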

\begin{proof}
We first note that according to Proposition \ref{prop:mta2},
$\Phi^{[n]}$ is analytic and bounded on $\K_R$, hence it is Lipschitz continuous, so that all we have to prove is that the approximations $v_k$ and $(v_k,w_k)$ to equations \eqref{eq:pullback}
and  \eqref{eq:micromacro} are of order $r$. As far as the pullback method is concerned, equation \eqref{eq:pullback} can be written as (see (\ref{eq:psiR}))
$$
\dot v = F^{[n]}(v) + {\cal R}_{t/\eps}^{[n]}(v), \quad v(0) = (\Phi^{[n]}_0)^{-1}(u_0),
$$
where $F^{[n]}$ is Lipschitz and where ${\cal R}_{\theta}^{[n]}$ is defined in \eqref{eq:defRtheta}. The result thus follows from the existence of the derivatives of ${\cal R}_{\theta}^{[n]}$ w.r.t.\,$\theta$ up to order $p$ and the estimates
$$
\max_{1 \leq \nu + \xi \leq p} \|\partial_t^\nu \partial_u^\xi ({\cal R}^{[n]}_{t/\eps})\|_{R} \leq  C\eps^{n-\nu},
$$
which are a consequence of Theorem \ref{th:delta} and of the Leibniz formula.

%In particular, it is Lipchitz, say  on 
% $\K_{R/2}$ with Lipschitz constant $4M/R$, independently of the values of $n$ and $\eps$. %, provided of course that $(n+1) \eps \leq \eps_0$ as assumed in Theorem \ref{th:mta}. 
% Under these circumstances, it is known that (see \cite{ccmm} or similarly \cite{perko}), for all $\tilde u_0 \in {\cal U}_0$, the differential equation $\dot v=F^{[n]}(v)$ has a unique solution $v$ which remains in $\K_{R/2}$ for all values $t \in [0,1]$ and all values of $\eps$ such that $(n+1) \eps \leq \eps_0$. 
%
%For the pullback vector field \eqref{eq:defFPB}, we observe that 
%$
%F^{PB}_{t,\eps} = F^{[n]} + {\cal R}^{[n]}_{t/\eps}
%$
%where ${\cal R}^{[n]}_{\theta}$ is defined in \eqref{eq:defRtheta}.
Consider now the micro-macro system \eqref{eq:micromacro}.
By definition of  $\delta_\theta^{[n]}$, it may be expressed as 
\begin{align} \label{eq:ss}
\left\{
\begin{array}{ccll}
\dot v &=& F^{[n]}(v), & v(0) = (\Phi^{[n]}_0)^{-1}(u_0), \\
 \dot w &=& L^{[n]}\left(t/\eps,t,w\right) \, w + b^{[n]}(t/\eps,t),  &  w(0)=0,
\end{array}
\right.
\end{align}
with 
$$
L^{[n]}(\theta,t,w)=\left( \int_0^1 \partial_u f_{\theta}\Big(\Phi^{[n]}_{\theta} (v(t)) + \mu w \Big) d\mu \right), \qquad 
b^{[n]}(\theta,t)=-  \delta_{\theta}^{[n]}(v(t)), 
$$
and where we have written in \eqref{eq:micromacro}
\begin{align*}
f_{t/\eps} \Big(\Phi^{[n]}_{t/\eps} (v(t)) + w \Big) - f_{t/\eps} \Big(\Phi^{[n]}_{t/\eps} (v(t))\Big)  
 &= \left( \int_0^1 \partial_u f_{t/\eps}\Big(\Phi^{[n]}_{t/\eps} (v(t)) + \mu w \Big) d\mu \right) w.
 \end{align*}
Here, both the operator function
$L^{[n]}(\theta,t,w)$ and the source term $b^{[n]}(\theta,t)=-  \delta_{\theta}^{[n]}(v(t))$ are of class $C^p$ w.r.t.\,$\theta \in \T$, of class $C^\infty$ w.r.t.\,$t \in [0,T]$ and analytic w.r.t.\,$w$ on $\K_R$ with bounds
\begin{align*}
{\cal L}^{[n]} := \max_{0 \leq \nu+\mu+\xi \leq p} \; \sup_{(\theta,t) \in \T \times [0,T]} \|\partial_\theta^\nu \partial_t^\mu \partial_w^\xi L^{[n]}(\theta,t,\cdot)\|_R 
\end{align*}
and 
\begin{align*}
 {\cal B}^{[n]} := \max_{0 \leq \nu+\mu \leq p} \; \sup_{(\theta,t) \in \T \times [0,T]} \|\partial_\theta^\nu \partial_t^\mu b^{[n]}(\theta,t)\|_X \leq \mathfrak{b}^{[n]} \, \eps^n, 
\end{align*}
where $\mathfrak{b}^{[n]}$ is a constant independent of $\eps$. 
%where, owing to $\langle b(\cdot,t) \rangle = 0$, the function $B(\theta,s) := \int_0^\theta b(\tau,s) d \tau$ is indeed periodic and of class $C^{p+1}$ w.r.t.\,$\theta$. 
%Now, since
%$$
%\frac{d}{ds} B(s/\eps,s) =  \frac{1}{\eps} b(s/\eps,s)  +  \partial_t B(s/\eps,s), 
%$$
%the integral form of the equation for $w$
%$$
%w(t) = \int_0^t L(s/\eps,s,w(s)) \, w(s) ds + \int_0^t b(s/\eps,s) ds
%$$
%leads to 
%$$
%\|w(t)\|_X \leq {\cal L}  \int_0^t \|w(s)\|_X ds + \eps \, {\cal B} \, (t+1)
%$$
%and by Gronwall Lemma we get $\|w(t)\|_X \leq \mathfrak{b} \, \exp({\cal L} t) \, \eps^{n+1}$. 
Exploiting the fact that $\langle b^{[n]}(\cdot,t)\rangle=0$  in the application of the Gronwall lemma allows to assert that there exists $M_0$ such that 
$$
\forall t \in [0,1], \quad \|w(t)\|_X \leq M_0  \, \mathfrak{b}^{[n]} \, \exp({\cal L}^{[n]} t) \,  \eps^{n+1}.
$$ 
Then, by differentiating \eqref{eq:ss} up to $\tilde r = \min(p,n)$ times and using again the Gronwall lemma, it can be verified that there exists a constant  $M_{\tilde r} \in \R^*$ such that 
$$
\forall \nu=0, \ldots,\tilde r+1, \quad \forall t \in [0,1], \quad \|\partial_t^\nu w(t)\|_X \leq M_{\tilde r}  \, \mathfrak{b}^{[n]} \, \exp( {\cal L}^{[n]} t) \,  \eps^{n+1-\nu}.
$$
As for the numerical approximations $w_k$ of $w(t_k)$, the boundedness of $L$  and the stability of the scheme allow to write 
$$
\|w_{k+1} \|_X \leq (1+ \alpha {\cal L}^{[n]} \Delta t) \|w_{k} \|_X + \beta \mathfrak{b}^{[n]} \eps^n \, \Delta t, \quad k=0, \ldots, N-1, 
$$
where $\alpha$ and $\beta$ are two constants depending on the scheme, so that 
$$
\forall k =0,\ldots, N, \quad \|w_{k} \|_X \leq \beta \, \mathfrak{b}^{[n]} \, \exp(\alpha {\cal L}^{[n]}) \, \eps^n
$$
When the numerical scheme advances the solution from $t_k$ to $t_{k+1}$, it uses the right-hand side of \eqref{eq:micromacro} evaluated at $w_k$. The fact that $w_k$ is of order $\eps^n$ implies that all derivatives of this right-hand side up to order $\tilde r+1$, when evaluated at $w_k$, are bounded in $\eps$. This allows to assert that the scheme retains its usual order $q$, provided $q \leq \tilde r$. 
\end{proof}

\section{Construction of uniformly accurate integrators}
\label{sect:construction}

In this section, we explain how our framework allows for the derivation of effective
uniformly accurate schemes. 
%We first focus on standard averaging which yields explicit and easy to implement methods.
%Then, we explain how the pullback method yields geometric integrators that can be efficiently implemented in spite of their implicitness.

\subsection{Standard averaging micro-macro methods}
\label{subsection:mm}
We now detail the procedure for the construction of a scheme with accuracy of order $1$ uniformly with respect to $\eps\in]0,1]$. The simplest change of variable can be obtained after a single iteration in \eqref{eq:phiiter}, i.e. for $n=1$, which yields
\begin{equation}\label{eq:eulex}
\Phi^{[1]}_\theta(w) = w+ \eps (g_\theta(w)-\langle g\rangle (w))
\end{equation}
where we denote
\begin{equation}\label{def:gtheta}
g_\theta(w) = \int_0^\theta \Big( f_\tau(w) - \langle f \rangle(w) \Big) d \tau,
\end{equation}
while the associated vector field $F^{[1]}$ is defined by $$F^{[1]}(v)=\langle f \rangle(v).$$
We can then apply any standard non-stiff integrator to the micro-macro system \eqref{eq:micromacro}, for instance the simplest Euler method.
To derive a uniformly accurate method of order $n$, we follow the same methodology and consider the change of variable 
$\Phi^{[n]}$ and the vector field $F^{[n]}$ defined by the iterations
$$F^{[n]}=\langle f \circ \Phi^{[n]} \rangle,$$
$$
\Phi^{[n+1]}_\theta = {\rm id} +  \eps \int_0^\theta \hskip-1ex \Big(f_\tau \circ \Phi^{[n]}_\tau - \partial_u \Phi^{[n]}_\tau F^{[n]} \Big) d\tau 
-\eps \left\langle \int_0^\theta \hskip-1ex \Big(f_\tau \circ \Phi^{[n]}_\tau - \partial_u \Phi^{[n]}_\tau F^{[n]} \Big) d\tau\right\rangle$$
and then apply a standard non-stiff integrator of order $n$ to the micro-macro system \eqref{eq:micromacro}.

\subsection{Stroboscopic averaging pullback methods}
\label{subsection:pb}
In this section, we construct two changes of variable for the pullback formulation \eqref{eq:pullback}
with order one and two respectively, for the construction of efficient geometric schemes that are uniformly accurate w.r.t.\,$\eps$.
The proposed changes of variables are perturbations of the maps $\Phi^{[1]}_\theta$ and $\Phi^{[2]}_\theta$
defined in \eqref{eq:phiiter} and chosen in order to preserve quadratic first integrals of the original system \eqref{eq:HOP}. Their construction relies on the implicit midpoint rule, known to preserve quadratic first integrals.

\subsubsection{A first order change of variable} \label{sec:firstorder}
We observe that the change of variable 
$$ \Phi^{[1]}_\theta(v) = v+ \eps g_\theta(v)$$
where $g_\theta$ is given by (\ref{def:gtheta}), is $1$-periodic with respect to $\theta$, and it may sound attractive
because it is completely explicit.
However, considering such an explicit change of variable is not desirable because it destroys the preservation
of quadratic first integrals, and the Hamiltonian structure if $f_\tau$ is assumed Hamiltonian, i.e. 
the transformed pullback system \eqref{eq:pullback} looses these geometric properties in general.
Alternatively, we shall consider the following change of variable based on the implicit midpoint rule,
\begin{equation}\label{eq:eulmid}
\tilde \Phi^{[1]}_\theta(v) = v+ \eps g_\theta\left(\frac{v+\tilde \Phi^{[1]}_\theta(v)}2\right).
\end{equation}
We note that $\tilde \Phi^{[1]}_\theta$ as well as differentials of $\tilde \Phi^{[1]}_\theta$ involved in \eqref{eq:pullback} can be computed by fixed point iterations based on the following identities.
The quantities involved in the right-hand side of \eqref{eq:pullback} can be obtained as
\begin{align}
%\tilde \Phi_\theta(v) &= v + \eps \int_0^\theta \Big( f_\tau(v) - \langle f \rangle(v) \Big) d \tau, \label{eq:phi}\\
\partial_\theta \tilde  \Phi^{[1]}_\theta(v) &= \eps \big( f_\theta - \langle f \rangle \big)\left(\frac{v+\tilde \Phi^{[1]}_\theta(v)}2\right) + \frac\eps2 \partial_u g_\theta \left(\frac{v+\tilde \Phi^{[1]}_\theta(v)}2\right) \partial_\theta \tilde  \Phi^{[1]}_\theta(v), \label{eq:deltaphi} \\
(  \partial_u \tilde \Phi^{[1]}_\theta(v))^{-1} K &= 
K - \frac\eps2 \partial_u g_\theta\left(\frac{v+\tilde \Phi^{[1]}_\theta(v)}2\right) (K+ (  \partial_u \tilde \Phi^{[1]}_\theta(v))^{-1} K)
%K - \eps \int_0^\theta \Big( f'_\tau(w) - \langle f' \rangle(w) \Big) (\partial_w \tilde  \Phi_\theta(w))^{-1} K d \tau. 
\label{eq:dwphi}
\end{align}
This permits us to propose an algorithm for computing the transformed vector field involved in (\ref{eq:pullback}) and defined as
\begin{equation}\label{eq:defFeps}
\tilde F_{\eps,\theta}(v) = \Big( \partial_u \tilde \Phi^{[1]}_{\theta}(v) \Big)^{-1} \Big(f_{\theta} \circ \tilde \Phi^{[1]}_{\theta} (v) - \frac{1}{\eps} \partial_\theta \tilde \Phi^{[1]}_{\theta}(v) \Big).
\end{equation}
The resulting algorithm for the computation of $\dot v=\tilde F_{\eps,t/\eps}(v)$ at time $t$ and point $v$ as given by equation (\ref{eq:micromacro}) is detailed below. In practice, it assumes that the function $(\theta,v) \mapsto f_\theta(v)$ and its directional derivative $\partial_u f_\theta(v) P$ in the direction $P$ are given and that their Fourier coefficients as periodic functions of $\theta$ can be computed cheaply, for instance by using a Fast Fourier Transform (FFT). 
We compute by fixed point iterations the approximations $P\simeq \tilde \Phi^\eps_\theta(v)$,
$U\simeq \frac{v+\tilde \Phi^{[1]}_\theta(v)}2$, $Q \simeq \frac{1}{\eps}\partial_\theta \tilde \Phi^{[1]}_\theta(v)$, $S\simeq \tilde F_{\eps,\theta}(v)$.

\begin{algorithm}[H]
{\bf Algorithm 1} for computing the vector field $\tilde F_{\eps,\theta}(w)$ in \eqref{eq:eulmid},\eqref{eq:defFeps} for given input $v,\theta,\eps$.   \\
%1. Let $\theta=t/\eps$.\\ 
%2. Compute $P=\tilde \Phi^\eps_\theta(w)$ from equation (\ref{eq:phi}). \\
%3. Compute $Q=\frac{1}{\eps} \partial_\theta \tilde \Phi^\eps_\theta(w)$ from equation (\ref{eq:deltaphi}). \\
%5. Compute $\dot w = (\tilde \Phi_\theta')(w)^{-1}K$ by fixed-point iteration of (\ref{eq:n5}).  
%4. Compute $K=P - Q$. \\
Set $P^{[0]}=v$, $S^{[0]}=Q^{[0]}=0$, and for
$i=0,1,2,\ldots$ compute
\begin{eqnarray*}
U^{[i]} &=& \frac{v + P^{[i]}}2\\
P^{[i+1]} &=& v + \eps g_\theta(U^{[i]})\\
Q^{[i+1]} &=&  f_\theta(U^{[i]}) - \langle f(U^{[i]}) \rangle + \frac\eps2 \partial_u g_\theta(U^{[i]}) Q^{[i]}\\
K^{[i+1]} &=& f_\theta(P^{[i+1]})-Q^{[i+1]} \\
S^{[i+1]} &=& K^{[i+1]}- \frac\eps2 \partial_u g_\theta(U^{[i]}) (S^{[i]} + K^{[i+1]})
\end{eqnarray*}
The output is $S^{[i]}$ that converges to $\tilde F_{\eps,\theta}(v)$ as $i$ grows to infinity.
\end{algorithm}
\subsubsection{A second order change of variable} 
Carrying one additional iteration of (\ref{eq:phiiter}) we obtain 
\begin{align*}
\Phi^{[2]}_\theta (v)& = v +  \eps \int_0^\theta \Big(f_\tau \circ \Phi^{[1]}_\tau (v)- (\partial_u \Phi^{[1]}_\tau(v)) \langle \partial_u \Phi^{[1]}(v) \rangle^{-1} \langle f \circ \Phi^{[1]} (v)\rangle  \Big) d\tau.
\end{align*}
Using the notation \eqref{def:gtheta}, we have $\Phi^{[1]}_\theta = {\rm id} +\eps g_\theta$ and $\Phi^{[2]}_\theta $ may be rewritten as
\begin{eqnarray} \label{eq:n0}
\Phi^{[2]}_\theta = {\rm id} +\eps g_\theta 
+ \eps ^2 \int_{0}^\theta \Big(\partial_u f_\tau g_\tau- \langle \partial_u f g\rangle -(\partial_u g_\tau -\langle \partial_u g\rangle) \langle f\rangle    
 \Big) d\tau  + \mathcal{O}(\varepsilon^3),
\end{eqnarray}
where we have used
\begin{eqnarray*} 
\partial_u \Phi_\tau^{[1]}={\rm id}+\eps\int_0^\tau \partial_u \left(f_s -\langle f\rangle\right) ds, \quad 
\langle\partial_u \Phi^{[1]}\rangle^{-1}={\rm id}-\eps\int_0^1 \int_0^\tau \partial_u \left(f_s -\langle f\rangle\right) ds d\tau + \mathcal{O}(\varepsilon^2),
\end{eqnarray*}
and 
\begin{eqnarray*} 
 \langle f\circ\Phi^{[1]} \rangle = \langle f \rangle+\eps\int_0^1 \int_0^\tau \partial_u f_\tau \left(f_s -\langle f\rangle\right)dsd\tau + \mathcal{O}(\varepsilon^2).
\end{eqnarray*}
Following the idea introduced in Section \ref{sec:firstorder} (in order to conserve exactly quadratic first integrals),
we search for a change of variables satisfying
\begin{eqnarray} \label{eq:mid}
\tilde \Phi^{[2]}_\theta(v) &=& v + \varepsilon h_\theta\left(\frac{\tilde \Phi^{[2]}_\theta(v)+v}2\right).
\end{eqnarray}
and satisfying $\tilde \Phi^{[2]}_\theta(v) = \Phi^{[2]}_\theta(v)  + \bigo(\eps^3)$.
Expansing  \eqref{eq:mid} and in Taylor series with respect to $\eps$,
$$
\tilde \Phi^{[2]}_\theta(v)  =  v + \varepsilon h_\theta(v) + \frac{\eps^2}2 \partial_u h_\theta (v)h_\theta(v) + \bigo(\eps^3),
$$
and comparing with \eqref{eq:n0}, we deduce the vector field $h_\theta$ given by 
\begin{eqnarray} \label{eq:defFtheta} 
h_\theta &=& g_\theta
+ \eps \int_{0}^\theta \Big(\partial_u f_\tau g_\tau- \langle \partial_u f g\rangle -(\partial_u g_\tau -\langle \partial_u g \rangle) \langle f\rangle    
 \Big) d\tau - \frac \eps2 \partial_u g_{\theta} g_\theta.
\end{eqnarray}
\begin{remark}
Since $\tilde \Phi^{[2]}_\theta=\Phi_\theta^{[2]} + \mathcal{O}(\varepsilon^3)$, the result of Theorem \ref{thm:regularity} with $n=2$ remains true.
\end{remark}
\begin{theorem}
Consider the pullback formulation \eqref{eq:pullback} with change of variable 
$\tilde \Phi^{[2]}_\theta(v)$ defined in  \eqref{eq:mid}.
If the original system \eqref{eq:HOP} has a quadratic first integral  $I(u(t))=I(u(0))$, then the pullback formulation \eqref{eq:pullback}
has the same quadratic first integral $I(v(t))=I(v(0))$.
If $f_\theta(u) = J^{-1}\nabla_u H_\theta(u)$ in \eqref{eq:HOP} is a Hamiltonian vector field, then the pullback formulation \eqref{eq:pullback} is again Hamiltonian.
\end{theorem}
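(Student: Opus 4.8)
The plan is to exploit that, for each frozen $\theta$, the change of variable $\tilde\Phi^{[2]}_\theta$ defined by \eqref{eq:mid} is nothing but the implicit midpoint rule, with ``time step'' $\eps$, applied to the vector field $h_\theta$ of \eqref{eq:defFtheta}, and to combine this with the two classical properties of that rule: (i) it preserves every quadratic first integral of the vector field it integrates; (ii) it yields a symplectic map whenever that vector field is Hamiltonian. Concretely, writing $w=\tilde\Phi^{[2]}_\theta(v)$ and $m=\tfrac12(v+w)$, \eqref{eq:mid} reads $w-v=\eps\,h_\theta(m)$, so for a quadratic invariant $I$ with Hessian $S=S^\top$ one has the elementary identity $I(w)-I(v)=(w-v)^\top\nabla I(m)=\eps\,\nabla I(m)^\top h_\theta(m)$; likewise, differentiating \eqref{eq:mid} exhibits $\partial_u\tilde\Phi^{[2]}_\theta$ as the Cayley transform $(\mathrm{id}-A)^{-1}(\mathrm{id}+A)$ of $A=\tfrac\eps2\,\partial_u h_\theta(m)$, which is symplectic as soon as $A$ lies in the symplectic Lie algebra, i.e. as soon as $h_\theta$ is Hamiltonian. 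Hence the theorem reduces to two statements about $h_\theta$: (C) if $\nabla I(u)^\top f_\theta(u)\equiv 0$ for all $\theta$, then $\nabla I(u)^\top h_\theta(u)\equiv 0$ for all $\theta$; and (D) if $f_\theta=J^{-1}\nabla_u H_\theta$ for all $\theta$, then $h_\theta$ is a Hamiltonian vector field for $J$ for all $\theta$. The passage from (C)--(D) to \eqref{eq:pullback} is then the standard fact that a (time-dependent) change of variable which preserves $I$, resp. is symplectic, maps a system admitting $I$ as a first integral, resp. a Hamiltonian system, to one of the same type.

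To prove (C): from $\nabla I(u)^\top f_\theta(u)\equiv 0$ one gets $\nabla I^\top\langle f\rangle\equiv 0$ by averaging and $\nabla I^\top g_\theta\equiv 0$ by integrating \eqref{def:gtheta}; differentiating these identities in $u$ and using $S=S^\top$ gives, for $\varphi\in\{f,\langle f\rangle,g,\langle g\rangle\}$, the relation $\nabla I(u)^\top\,\partial_u\varphi_\theta(u)\,k=-k^\top S\,\varphi_\theta(u)$. Substituting into \eqref{eq:defFtheta}, and using $\partial_\tau g_\tau=f_\tau-\langle f\rangle$ together with $g_0=g_1=0$ (continuity and $1$-periodicity of $\theta\mapsto\Phi^{[1]}_\theta$) to recognise $g_\tau^\top S f_\tau=\partial_\tau\bigl(\tfrac12 g_\tau^\top S g_\tau\bigr)+\langle f\rangle^\top S g_\tau$, one finds after cancellations that the $\bigo(\eps)$ integral term in $h_\theta$ contributes exactly $-\tfrac\eps2\,g_\theta(u)^\top S g_\theta(u)$ to $\nabla I^\top h_\theta$, which is annihilated by the contribution $+\tfrac\eps2\,g_\theta^\top S g_\theta$ coming from the term $-\tfrac\eps2\,\partial_u g_\theta g_\theta$. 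Thus $\nabla I^\top h_\theta\equiv 0$. This is short but demands careful bookkeeping of the averages.

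Step (D) is the part I expect to be the real obstacle, because several individual terms in \eqref{eq:defFtheta} --- notably $\partial_u g_\theta\,g_\theta$ --- are \emph{not} Hamiltonian on their own. I would argue structurally rather than by writing $h_\theta=J^{-1}\nabla(\cdots)$ by hand. In the Hamiltonian case the iterates $\Phi^{[k]}_\theta$ of \eqref{eq:phiiter} (with $G^{[k]}\equiv 0$) are symplectic near-identity transformations --- this is the classical statement that stroboscopic averaging preserves the Hamiltonian structure (see, e.g., \cite{cmss3}). Representing such a near-identity symplectic map as the time-$1$ flow of $\eps W_\eps$, with $W_\eps=W_1+\eps W_2+\cdots$ a Hamiltonian vector field, and expanding in $\eps$, comparison with \eqref{eq:n0} identifies $W_1=g_\theta$ and $W_2=Y_2-\tfrac12\partial_u g_\theta\,g_\theta$, where $Y_2$ denotes the $\eps^2$-coefficient appearing in \eqref{eq:n0}; both $W_1$ and $W_2$ are Hamiltonian for $J$. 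Since \eqref{eq:defFtheta} is exactly $h_\theta=W_1+\eps W_2$, the field $h_\theta$ is a sum of Hamiltonian vector fields, hence Hamiltonian; from this vantage point the role of the correction $-\tfrac\eps2\partial_u g_\theta g_\theta$ is precisely to turn $\eps^{-1}(\tilde\Phi^{[2]}_\theta-\mathrm{id})$, to the relevant orders, into the generator of a flow.

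It then remains to assemble the pieces. All integrals in \eqref{eq:defFtheta} vanish at $\theta=0$ and $g_0=0$, so $h_0=0$, hence $\tilde\Phi^{[2]}_0=\mathrm{id}$ and $v(0)=(\tilde\Phi^{[2]}_0)^{-1}(u_0)=u_0$. Along any solution $v$ of \eqref{eq:pullback}, $u^\eps(t)=\tilde\Phi^{[2]}_{t/\eps}(v(t))$ solves \eqref{eq:HOP} by construction of the pullback equation. By (C) and property (i), $I\circ\tilde\Phi^{[2]}_\theta=I$ for every $\theta$, hence $I(v(t))=I(u^\eps(t))=I(u_0)=I(v(0))$. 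By (D) and property (ii), $\tilde\Phi^{[2]}_{t/\eps}$ is symplectic for every $t$; since \eqref{eq:HOP} is then Hamiltonian with Hamiltonian $H_{t/\eps}$ and $\tilde\Phi^{[2]}_{t/\eps}$ is a time-dependent symplectic change of variables, the pullback system \eqref{eq:pullback} is again Hamiltonian, with a time-dependent Hamiltonian built from $H_{t/\eps}\circ\tilde\Phi^{[2]}_{t/\eps}$ plus the correction accounting for the $\theta$-dependence of $\tilde\Phi^{[2]}_\theta$. The same argument applies verbatim once $\tilde\Phi^{[2]}_\theta=\Phi^{[2]}_\theta+\bigo(\eps^3)$ is taken to hold exactly, which is the defining property of $h_\theta$.
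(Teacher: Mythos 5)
Your proposal is correct, and its top-level strategy is the same as the paper's: reduce the theorem to the two classical properties of the implicit midpoint rule together with the claim that the field $h_\theta$ in \eqref{eq:defFtheta} inherits the geometric character of $f_\theta$. Where you genuinely differ is in how that claim is proved. The paper does both cases at one stroke by rewriting $h_\theta$ in the bracket form \eqref{eq:Ftheta}, via the identity $-\partial_u g_\theta\, g_\theta=\int_0^\theta\big(\partial_u g_\tau\langle f\rangle+\langle\partial_u f\rangle g_\tau-\partial_u g_\tau f_\tau-\partial_u f_\tau g_\tau\big)d\tau$: since Lie brackets, $\theta$-integrals and averages preserve both ``admits $I$ as a first integral'' and ``is Hamiltonian'', the statement about $h_\theta$ is immediate and self-contained. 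Your step (C) is instead a direct quadratic-form computation, and it checks out: with $\nabla I^\top\partial_u\varphi\,k=-k^\top S\varphi$, the $\bigo(\eps)$ integral in \eqref{eq:defFtheta} contributes $-\frac{\eps}{2}g_\theta^\top S g_\theta$ (the averaged terms cancel using $g_0=g_1=0$ and the symmetry of $S$), which is exactly annihilated by the contribution $+\frac{\eps}{2}g_\theta^\top S g_\theta$ of $-\frac{\eps}{2}\partial_u g_\theta g_\theta$. Your step (D) replaces the bracket identity by the symplecticity of the stroboscopic-averaging change of variable plus a generator (backward-error) argument identifying $h_\theta$ with $W_1+\eps W_2$. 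This works, with two small caveats: the iterates $\Phi^{[k]}_\theta$ of \eqref{eq:phiiter} are \emph{not} exactly symplectic, only symplectic up to the truncation order (order by order as formal series), which is however all your order-$\eps^2$ matching needs; and the generator argument gives $W_1,W_2$ only \emph{locally} Hamiltonian in general, a point you can close by simple-connectedness or, more directly, by the explicit formula \eqref{eq:Ftheta}. In short, the paper's rewriting buys a shorter, unified, truncation-free argument; your version makes the quadratic-invariant cancellation explicit but imports the heavier external result on the symplecticity of the averaging transformation for the Hamiltonian half.
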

\begin{proof}
The advantage of formulation \eqref{eq:mid}-\eqref{eq:defFtheta} becomes apparent if one rewrites $h_\theta$ as
\begin{eqnarray} \label{eq:Ftheta}
h_\theta &=& g_\theta
+ \frac\eps2 \int_{0}^\theta \left([f_\tau+\langle f\rangle,g_\tau] - \big\langle[f+\langle f\rangle,g]\big\rangle\right) d\tau, 
\end{eqnarray}
where we expand
$$
-\partial_u g_\theta g_\theta = \int_0^\theta (\partial_u g_\tau \langle f\rangle + \langle \partial_u f\rangle g_\tau -\partial_u g_\tau f_\tau-\partial_u f_\tau g_\tau) d\tau.
$$
Here, $[\cdot,\cdot]$ denotes the usual Lie-bracket of two vector fields, i.e.
$$
[k_1,k_2](v) = \partial_u k_1 (v) k_2(v) - \partial_u k_2(v) k_1(v). 
$$ 
As a matter of fact, the Lie-bracket operation is inherently geometric and $F_\theta$ 
preserves first integrals as soon as $f_\theta$ does, and it is Hamiltonian as soon as $f_\theta$ is. 
Since the midpoint rule is known to preserve quadratic first integrals and to be a symplectic method for Hamiltonian vector fields, then $\tilde \Phi^{[2]}_\theta$ preserves these properties. 
\end{proof}
We then have the following formulas for the computation of the right-hand side of equation (\ref{eq:pullback}):  
\begin{align}
\tilde \Phi^\eps_\theta(v) &= v + \varepsilon h_\theta\left(\frac{\tilde \Phi^{[2]}_\theta(v)+v}2\right), \label{eq:n1}\\
\partial_\theta \tilde \Phi^{[2]}_\theta(v) &= \varepsilon \partial_\theta h_\theta\left(\frac{\tilde \Phi^{[2]}_\theta(v)+v}2\right) + \frac{\varepsilon}{2}  \partial_u h_\theta \left(\frac{\tilde \Phi^{[2]}_\theta(v)+v}2\right) \partial_\theta \tilde \Phi^{[2]}_\theta(v), \label{eq:n2}\\
%\tilde \Phi_\theta'(u)K &= K + \varepsilon F_\theta'\left(\frac{\tilde \Phi_\theta(u)+u}2\right)\frac{K+\tilde \Phi_\theta'(u)K}2, \label{eq:n3}\\
%\tilde \Phi_\theta^{-1}(u) &= u - \varepsilon F_\theta\left(\frac{\tilde \Phi_\theta^{-1}(u)+u}2\right), \label{eq:n4}\\
(\partial_u \tilde  \Phi^{[2]}_\theta(v))^{-1}K &= K - \varepsilon \partial_u h_\theta \left(\frac{\tilde \Phi^{[2]}_\theta(v)+v}2\right)\frac{K+(\partial_u \tilde  \Phi^{[2]}_\theta(v))^{-1}K}2. \label{eq:n5}
\end{align}
 The resulting algorithm for the computation of $\dot w$ at time $t$ and point $w$ as given by equation (\ref{eq:micromacro}) is detailed below\footnote{%
To facilitate the convergence of the fixed point iterations for large $\eps$, for all $\eps$ one can multiply $g_\theta$ in \eqref{eq:eulmid} or $h_\theta$ in \eqref{eq:n1} by a damping term such as $\exp({-\eps^2})=1+\bigo(\eps^2)$ which does not affect the uniform accuracy of order two.
}
:
%\begin{algorithm}
%{\bf Algorithm 2}  \\  \\
%1. Let $\theta=t/\eps$.\\ 
%2. Compute $P=\tilde \Phi^\eps_\theta(w)$ by fixed-point iteration of (\ref{eq:n1}). \\
%3. Compute $Q=\partial_\theta \tilde \Phi^\eps_\theta(w)$ by fixed-point iteration of (\ref{eq:n2}). \\
%4. Compute $K=P-\frac{1}{\eps} Q$. \\
%5. Compute $\dot w = (\tilde  \Phi'_\theta(w))^{-1}K$ by fixed-point iteration of (\ref{eq:n5}).  
%\end{algorithm}
\begin{algorithm}[H]
{\bf Algorithm 2} for computing the vector field $\tilde F_{\eps,\theta}(v)$ in \eqref{eq:mid}, \eqref{eq:defFeps} for given input $v,\theta,\eps$.   \\
%1. Let $\theta=t/\eps$.\\ 
%2. Compute $P=\tilde \Phi^\eps_\theta(w)$ from equation (\ref{eq:phi}). \\
%3. Compute $Q=\frac{1}{\eps} \partial_\theta \tilde \Phi^\eps_\theta(w)$ from equation (\ref{eq:deltaphi}). \\
%5. Compute $\dot w = (\tilde \Phi_\theta')(w)^{-1}K$ by fixed-point iteration of (\ref{eq:n5}).  
%4. Compute $K=P - Q$. \\
Set $P^{[0]}=v$, $S^{[0]}=Q^{[0]}=0$, and for
$i=0,1,2,\ldots$ compute
\begin{eqnarray*}
U^{[i]} &=& \frac{v + P^{[i]}}2\\
P^{[i+1]} &=& v + \eps h_\theta(U^{[i]})\\
Q^{[i+1]} &=&  \partial_\theta h_\theta + \frac\eps2 \partial_u h_\theta (U^{[i]}) Q^{[i]}\\
K^{[i+1]} &=& h_\theta(P^{[i+1]})-Q^{[i+1]} \\
S^{[i+1]} &=& K^{[i+1]} - \frac\eps2 \partial_u h_\theta (U^{[i]}) (S^{[i]} + K^{[i+1]}).
\end{eqnarray*}
The output is $S^{[i]}$ that converges to $\tilde F_{\eps,\theta}(v)$ as $i$ grows to infinity.
\end{algorithm}

\begin{remark}
The above methodology is not restricted to the preservation of quadratic first integrals.
Consider the special case where the vector field in \eqref{eq:HOP} has the form 
$$
f_\theta(u) = S_\theta \nabla I(u)
$$
where $S_\theta=-S_\theta^T$ is a skew-symmetric matrix of size $d\times d$ depending periodically on the variable $\theta\in \T$.
In this case, $I:\R^d\rightarrow \R$ turns out to be a first integral and needs not to be quadratic.
Then, in place of the implicit midpoint rule in \eqref{eq:mid}, one can consider the averaged vector field method \cite{QuM08} defined as
\begin{eqnarray} \label{eq:avf}
\tilde \Phi^{[2]}_\theta(v) &=& v + \varepsilon \int_0^1 h_\theta\left(\mu\tilde \Phi^{[2]}_\theta(v)+(1-\mu)v\right)d\mu,
\end{eqnarray}
and the corresponding pullback formulation \eqref{eq:pullback} has the same first integral ($I(v(t))=I(v(0))$).
\end{remark}

\section{Numerical experiments}
\label{sect:numerical}
In this section, we present some numerical experiments that confirm our theoretical analysis and illustrates the efficiency of our strategy. We test our method on two different models, the H\'enon-Heiles equations and the Klein-Gordon equation. 
%This is in contrast with the use of standard numerical schemes of order $n$ which, according to Theorem \ref{thm:regularity} require the use of $\Phi^{[n]}$.
\subsection{The numerical schemes}
To present our numerical schemes, let us write the standard averaging micro-macro equations defined in Subsection \ref{subsection:mm} as well as the stroboscopic averaging pullback equation defined in Subsection \ref{subsection:pb} under the following generic form:
\begin{equation}\dot u=G^{[n]}_{\eps,t/\eps}(t,u),\label{eq:generic}\end{equation}
where the superscript $n$ refers to the order of the  change of variable $\Phi^{[n]}$ used in the method.
Recall that the time derivatives up to order $n$ of the vector field $G^{[n]}_{\eps,t/\eps}$ are uniformly bounded with respect to $\eps\in ]0,1]$.
In our numerical experiments, we shall use the following numerical schemes that
are of integral type. This enables to gain one order of regularity and to use the simpler change of variable $\Phi^{[n-1]}$ for schemes of order $n$, instead of the change of variable $\Phi^{[n]}$ normally required in Theorem \ref{thm:regularity}.\footnote{Indeed, it can be shown that the local error and hence the global error of order two of the integral schemes $S^{RK2}_{\Delta t}$ and 
$S^{midpoint}_{\Delta t}$ applied to any system $\dot y=f(t,y)$ involves only the norms of the second order derivative $\ddot y$ of the solution and the derivatives $\partial_t f,\partial_y f,\partial_{yy} f,\partial_{yt} f$ of the vector field, and in contrast to a standard Runge-Kutta method, it does not depend on $\dddot y,\partial_{tt} f$.}
\bigskip

\noindent{\em -- Order 2 Integral Runge-Kutta scheme (explicit):} $u_{k+1}=S^{RK2}_{\Delta t}u_k$, with
$$S^{RK2}_{\Delta t}u_k=u_k+\int_{t_k}^{t_{k+1}}G^{[1]}_{\eps,t/\eps}\left(t_{k+1/2},u_k+\int_{t_k}^{t_{k+1/2}}G^{[1]}_{\eps,s/\eps}(t_k,u_k)ds\right)dt,$$
where $t_k=k\Delta t$. This scheme will be applied to the standard averaging micro-macro equations.
\bigskip

\noindent{\em -- Order 2 Integral midpoint scheme (implicit):} $u_{k+1}=S^{midpoint}_{\Delta t}u_k$, with
$$S^{midpoint}_{\Delta t}u_k=u_k+\int_{t_k}^{t_{k+1}}G^{[1]}_{\eps,t/\eps}\left(t_{k+1/2},\frac{u_k+u_{k+1}}2\right)dt.$$
This implicit scheme can be simply implemented using fixed point iterations. It will be applied to the stroboscopic averaging pullback equation.
Recall that $G^{[n]}_{\eps,\theta}$ is periodic w.r.t.\,$\theta$, so in these two methods the integrals can be either analytically precomputed, or easily computed numerically by using FFT. 
Moreover, it can be shown that, thanks to the uniform boundedness of the first time derivative of $G^{[1]}_{\eps,t/\eps}$, these two schemes are uniformly accurate (UA) of order 2.
\bigskip

\noindent{\em -- Order 3  extrapolation scheme :}
If $S^{RK2}_{\Delta t}$ denotes the above integral RK2 scheme constructed with $G^{[2]}_{\eps,t/\eps}$ instead of $G^{[1]}_{\eps,t/\eps}$, we construct a UA method $S^{extrap 3}_{\Delta t}$ of order 3 by implementing the Richardson extrapolation:
$$S^{extrap 3}_{\Delta t}=\frac{1}{3}\left(4S^{RK2}_{\Delta t/2}\circ S^{RK2}_{\Delta t/2}-S^{RK2}_{\Delta t}\right).$$
This scheme will be applied to the standard averaging micro-macro equations.

\bigskip

\noindent{\em -- Order 3 composition scheme :}
If $S^{midpoint}_{\Delta t}$ denotes the above integral midpoint scheme constructed with $G^{[2]}_{\eps,t/\eps}$ instead of $G^{[1]}_{\eps,t/\eps}$, we construct a UA method $S^{compos 3}_{\Delta t}$ of order exactly 3 by composition, setting:
$$S^{compos 3}_{\Delta t}=S^{midpoint}_{\alpha_3 \Delta t}\circ S^{midpoint}_{\alpha_2 \Delta t} \circ S^{midpoint}_{\alpha_1 \Delta t},$$
where (for instance) $\alpha_2=-1.8$ and $\alpha_1>\alpha_3$ are uniquely defined such that the order three conditions hold, $\alpha_1+\alpha_2+\alpha_3=1,\alpha_1^3+\alpha_2^3+\alpha_3^3=0$. We emphasise that this non-symmetric composition scheme is considered for illustrating the theory only, because efficient high-order symmetric composition methods could also be considered.
%$$\alpha_1=1.602366946294056,\qquad \alpha_2=-1.8,\qquad \alpha_3=1-\alpha_1-\alpha_2.$$
This scheme will be applied to the stroboscopic averaging pullback equation.
\bigskip

\noindent{\em -- Order 4 extrapolation scheme :}
If $S^{RK2}_{\Delta t}$ denotes the above integral RK2 scheme constructed with $G^{[3]}_{\eps,t/\eps}$ instead of $G^{[1]}_{\eps,t/\eps}$, we construct a UA method $S^{extrap 4}_{\Delta t}$ of order 4 by implementing the Bulirsch and the Stoer extrapolations:
$$S^{extrap 4}_{\Delta t}=\frac{1}{12}\left(27 S^{RK2}_{\Delta t/3}\circ S^{RK2}_{\Delta t/3}\circ S^{RK2}_{\Delta t/3}-16S^{RK2}_{\Delta t/2}\circ S^{RK2}_{\Delta t/2}+S^{RK2}_{\Delta t}\right).$$
This scheme will be applied to the standard averaging micro-macro equations.

%\begin{remark}
%We emphasise that although the integral midpoint scheme is implicit, thanks to the boundedness of the derivative of $G^{[1]}_{\eps,t/\eps}$ uniformly w.r.t. $\eps$, it can be implemented using fixed point iterations instead of a Newton-type method, which permits to reduce the cost in large dimensions by avoiding linear algebra calculations.
%To facilitate the convergence of the fixed point iterations for large $\eps$, for all $\eps$ one can multiply $G^{[1]}_{\eps,t/\eps}$ by a damping term such as $e^{-\eps^2}=1+\bigo(\eps^2)$ which does not affect the uniform accuracy of order two of the method.
%\end{remark}

\subsection{The H\'enon-Heiles model - micro-macro method}
\label{Henon-Heiles}
In this subsection and the next subsection, we consider the H\'enon-Heiles model \cite{henon-heiles,gni}, parametrized with $\eps$. This is a Hamiltonian system with the following Hamiltonian energy:
$$H(q_1,q_2,p_1,p_2)=\frac{p_1^2}{2\eps}+\frac{p_2^2}{2}+\frac{q_1^2}{2\eps}+\frac{q_2^2}{2}+q_1^2q_2-\frac{1}{2}q_2^3.$$
When $\eps$ is small, the variables $q_1$, $p_1$ are highly oscillatory. Let us write the associated filtered system, satisfied by the variable $w(t)\in \RR^4$ defined by
$$\left\{\begin{array}{l}
\ds w_1(t)=\cos\left(\frac{t}{\eps}\right)q_1(t)-\sin\left(\frac{t}{\eps}\right)p_1(t),\\
\ds w_2(t)=q_2(t),\\
\ds w_3(t)=\sin\left(\frac{t}{\eps}\right)q_1(t)+\cos\left(\frac{t}{\eps}\right)p_1(t),\\
\ds w_4(t)=p_2(t).
\end{array}\right.
$$
This variable $w(t)$ satisfies the system
\begin{equation}
\frac{dw}{dt}(t)=f\left(\frac{t}{\eps},w(t)\right),
\label{hop2}
\end{equation}
with
$$\left\{\begin{array}{l}
\ds f_1(\theta,w)=2\sin \theta\left(w_1\cos\theta+w_3\sin\theta\right)w_2\\[1mm]
\ds f_2(\theta,w)=w_4\\[1mm]
\ds f_3(\theta,w)=-2\cos \theta\left(w_1\cos\theta+w_3\sin\theta\right)w_2\\[1mm]
\ds f_4(\theta,w)=-2\left(w_1\cos\theta+w_3\sin\theta\right)^2+w_2^2-w_2.
\end{array}\right.
$$
For the standard averaging micro-macro method, all the integrals in $\theta$ in our numerical schemes can be pre-computed analytically. We have implemented these computations with the software Maple. 

Let us present our numerical results obtained by the standard averaging micro-macro method. The initial data is $(0.12,0.12,0.12,0.12)$. In Figure \ref{fig1a}, we have represented the error at $T_{final}=1$ as a function of $\Delta t$, for different values of $\eps$, for our UA scheme $S^{RK2}_{\Delta t}$ of order 2, for our UA scheme $S^{extrap 3}_{\Delta t}$ of order 3 and for our UA scheme $S^{extrap 4}_{\Delta t}$ of order 4. The reference solution is computed with the Matlab ode45 function applied to the filtered system \eqref{hop2}. The error is independent of $\eps$ as shown in Figure \ref{fig1a} where the curves for different values of $\eps$ are nearly identical. This confirms that the schemes are uniformly accurate respectively of order 2, 3 and 4, as predicted by Theorem \ref{thm:regularity}.

\begin{figure}[htb]
{
		\centering
\begin{subfigure}[t]{0.5\linewidth}
\includegraphics[width=1.05\linewidth]{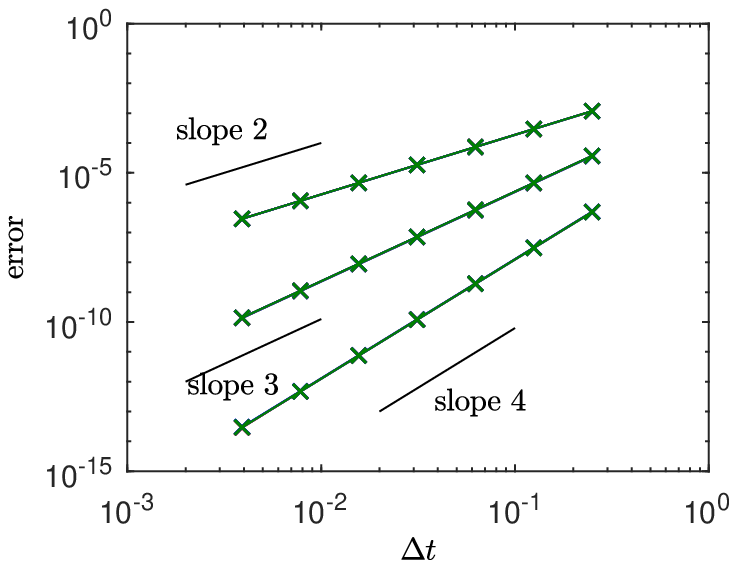}
\caption{Micro-macro method. \label{fig1a}}
\end{subfigure}
\begin{subfigure}[t]{0.5\linewidth}
\includegraphics[width=1.05\linewidth]{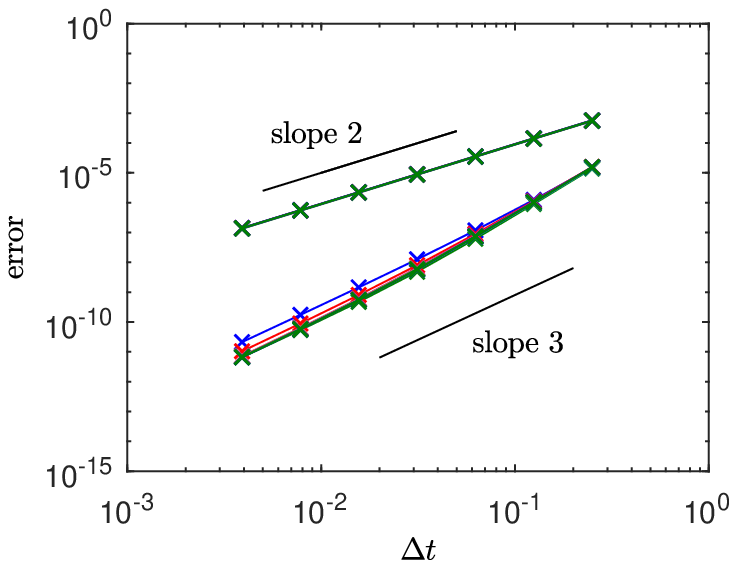}
\caption{Pullback method. \label{fig1b}}
\end{subfigure}
}
\caption{H\'enon-Heiles model. Error as a function of $\Delta t$ for $\eps=2^{-k}$, $k\in\{0,1,2,\cdots, 9\}$. 
Fig.\ts\ref{fig1a} (micro-macro method):
UA scheme $S^{RK2}_{\Delta t}$ of order 2 (top curves), UA scheme $S^{extrap 3}_{\Delta t}$ of order 3 (middle curves) and UA scheme $S^{extrap 4}_{\Delta t}$ of order 4 (bottom curves).
Fig.\ts\ref{fig1b} (pullback method):
UA scheme $S^{midpoint}_{\Delta t}$ of order 2 (top curves) and UA scheme $S^{compos 3}_{\Delta t}$ of order 3 (bottom curves).
}
\label{fig1}
\end{figure}

\subsection{The H\'enon-Heiles model - pullback method}

In this section, we present the results obtained with the pullback method with stroboscopic averaging for the H\'enon-Heiles model. In Figure \ref{fig1b}, we have represented the error at time $T_{final}=1$ as a function of $\Delta t$ for different values of $\eps$, for our UA scheme $S^{midpoint}_{\Delta t}$ of order 2 and for our UA scheme $S^{compos 3}_{\Delta t}$ of order 3. These figures confirm that our schemes are UA, as stated by Theorem \ref{thm:regularity}. Note that for these schemes, the variable $\theta$ of the vector field $F^{[n]}_{\eps, \theta}$ is discretized with 32 points and FFT is used to compute all the integrals\footnote{%
This discretization turns out to be exact for the H\'enon-Heiles problem, due to the low degree of the involved trigonometric polynomials.}. The initial data is $(0.12,0.12,0.12,0.12)$.

Let us now check that our schemes constructed on the stroboscopic averaging pullback method have the expected geometric properties. In Figure \ref{figHH2a}, we have represented the evolution of the error on the Hamiltonian $H$ for $\eps=1$ over long times $t \leq 30000$ for the scheme $S^{midpoint}_{\Delta t}$ with $\Delta t=0.1$ and $\Delta t=0.2$ and for the scheme $S^{compos 3}_{\Delta t}$ with $\Delta t=0.2$. In Figure \ref{figHH2b}, we have represented the results of the same calculations with $\eps=0.001$. The initial data is $(0,0,p_1,p_2)$ with $p_1=\sqrt{2\eps/12}\sin (\frac{\pi}{8})$, $p_2=\sqrt{2/12}\cos (\frac{\pi}{8})$. We observe no drift for the Hamiltonian.

\begin{figure}[htb]
{
		\centering
\begin{subfigure}[t]{0.5\linewidth}
\includegraphics[width=1.05\linewidth]{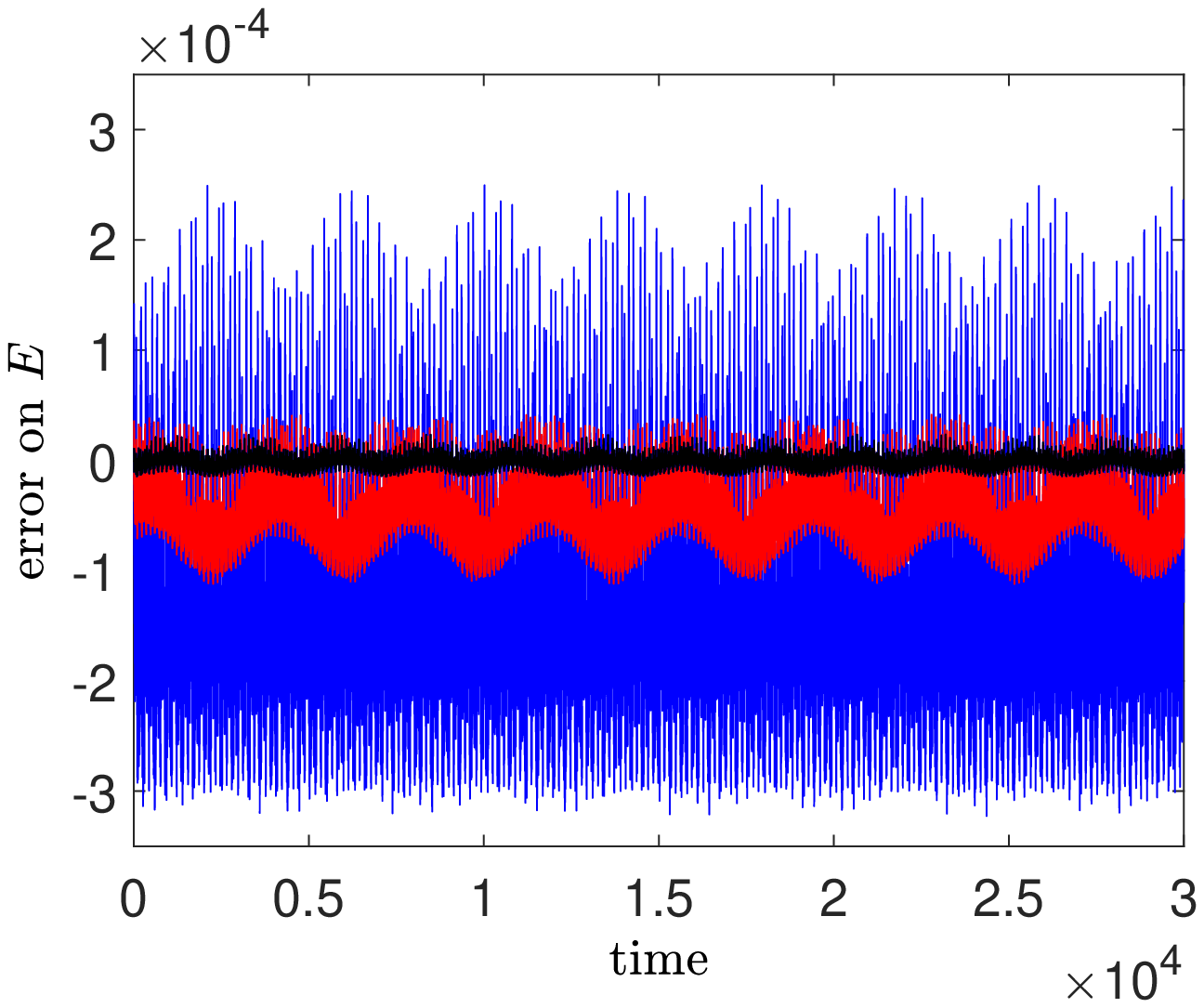}
\caption{$\eps=1$. } \label{figHH2a}
\end{subfigure}
\begin{subfigure}[t]{0.5\linewidth}
\includegraphics[width=1.05\linewidth]{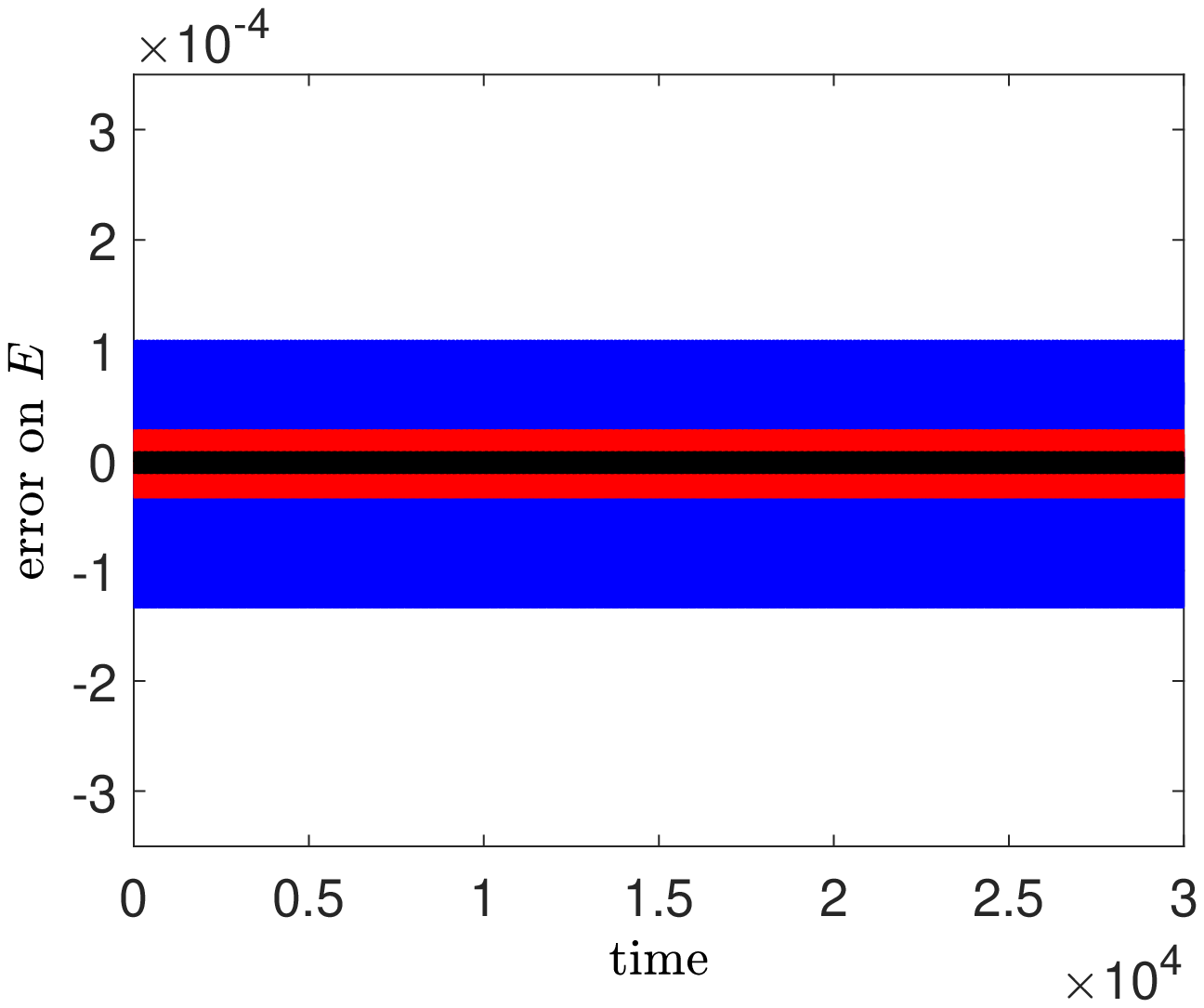}
\caption{$\eps=0.001$. \label{figHH2b}}
\end{subfigure}
}
\caption{H\'enon-Heiles model, pullback method. Long time evolution of the error on the Hamiltonian. UA scheme $S^{midpoint}_{\Delta t}$ of order 2 (blue: $\Delta t=0.2$; red: $\Delta t=0.1$). UA scheme $S^{compos 3}_{\Delta t}$ of order 3 (black: $\Delta t=0.2$).}\label{figHH2}
\end{figure}

Let us now observe the so-called Poincar\'e cuts \cite{henon-heiles,gni}, i.e. the points $(q_2,p_2)$ reached when the solution crosses the hyperplane $q_1=0$ with $p_1>0$. 

In Figure \ref{figpoincarea}, we have represented 5 orbits for $\eps=1$, $0\leq t\leq 30000$ and for the energy $H=1/12$. The initial data are $(0,0,p_1,p_2)$ with $p_1=\sqrt{2\eps H}\sin (\theta \frac{\pi}{2})$, $p_2=\sqrt{2H}\cos (\theta\frac{\pi}{2})$ with $\theta=0.025,\,0.25,\,0.75,\,0.95$ and $1.25$. Our UA scheme $S^{midpoint}_{\Delta t}$ is used with the timestep $\Delta t=0.1$ for $t\leq 30000$.

In Figure \ref{figpoincareb}, we have represented 3 orbits for $\eps=0.001$, $0\leq t\leq 30000$ and for the energy $H=1/6$. The initial data are $(0,0,p_1,p_2)$ with $p_1=\sqrt{2\eps H}\sin (\theta \frac{\pi}{2})$, $p_2=\sqrt{2H}\cos (\theta\frac{\pi}{2})$ with $\theta=0.1,\,0.5$ and $0.9$. Our UA scheme $S^{midpoint}_{\Delta t}$ is used with the timestep $\Delta t=0.1$ for $t\leq 30000$.

On these two figures, we observe that the Poincar\'e curves are well reproduced, which indicate that the "formal second invariant" is well preserved by our scheme.

\begin{figure}[htb]
{
		\centering
\begin{subfigure}[t]{0.5\linewidth}
\includegraphics[width=1.05\linewidth]{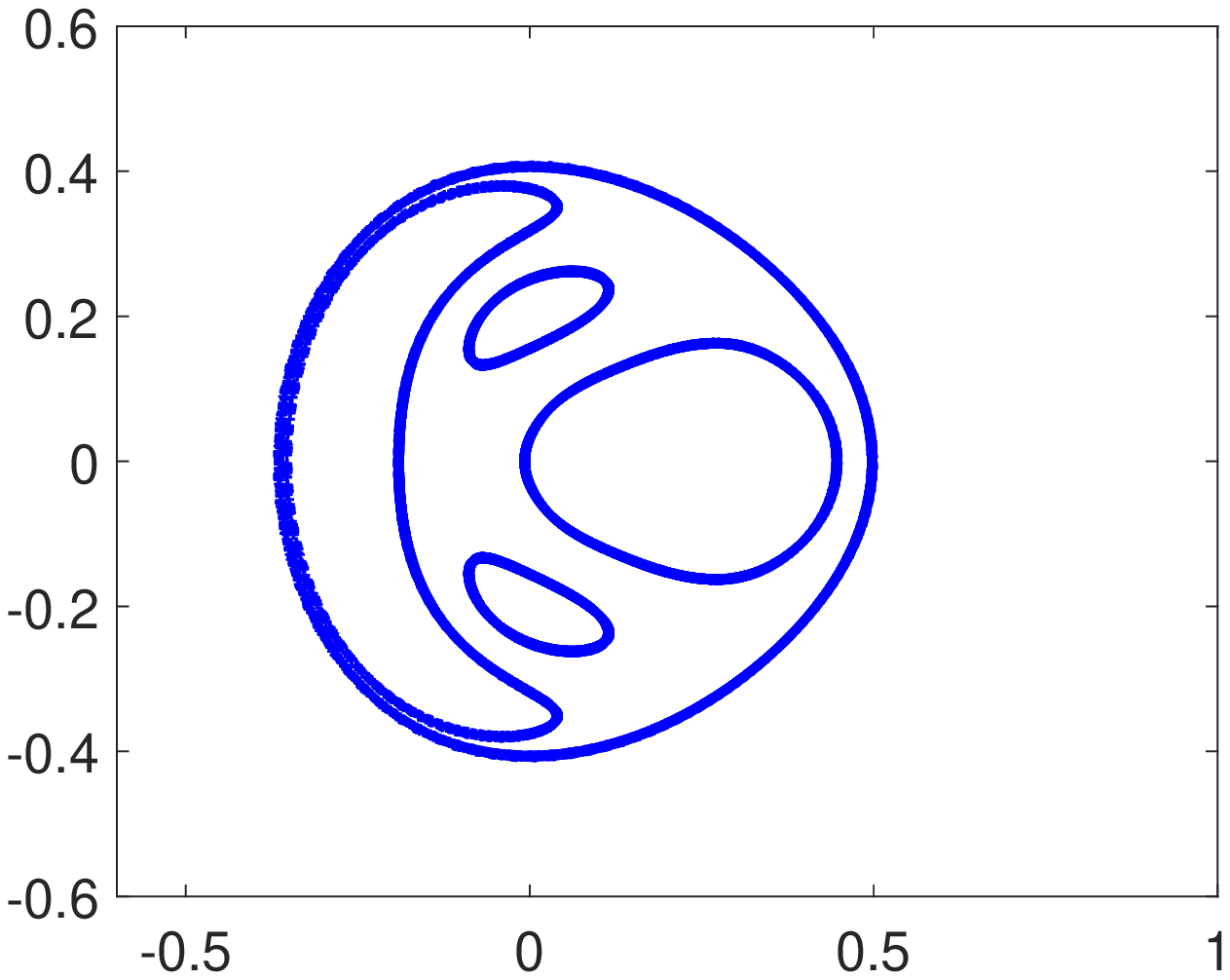}
\caption{$\eps=1$, $H=1/12$. \label{figpoincarea}}
\end{subfigure}
\begin{subfigure}[t]{0.5\linewidth}
\includegraphics[width=1.05\linewidth]{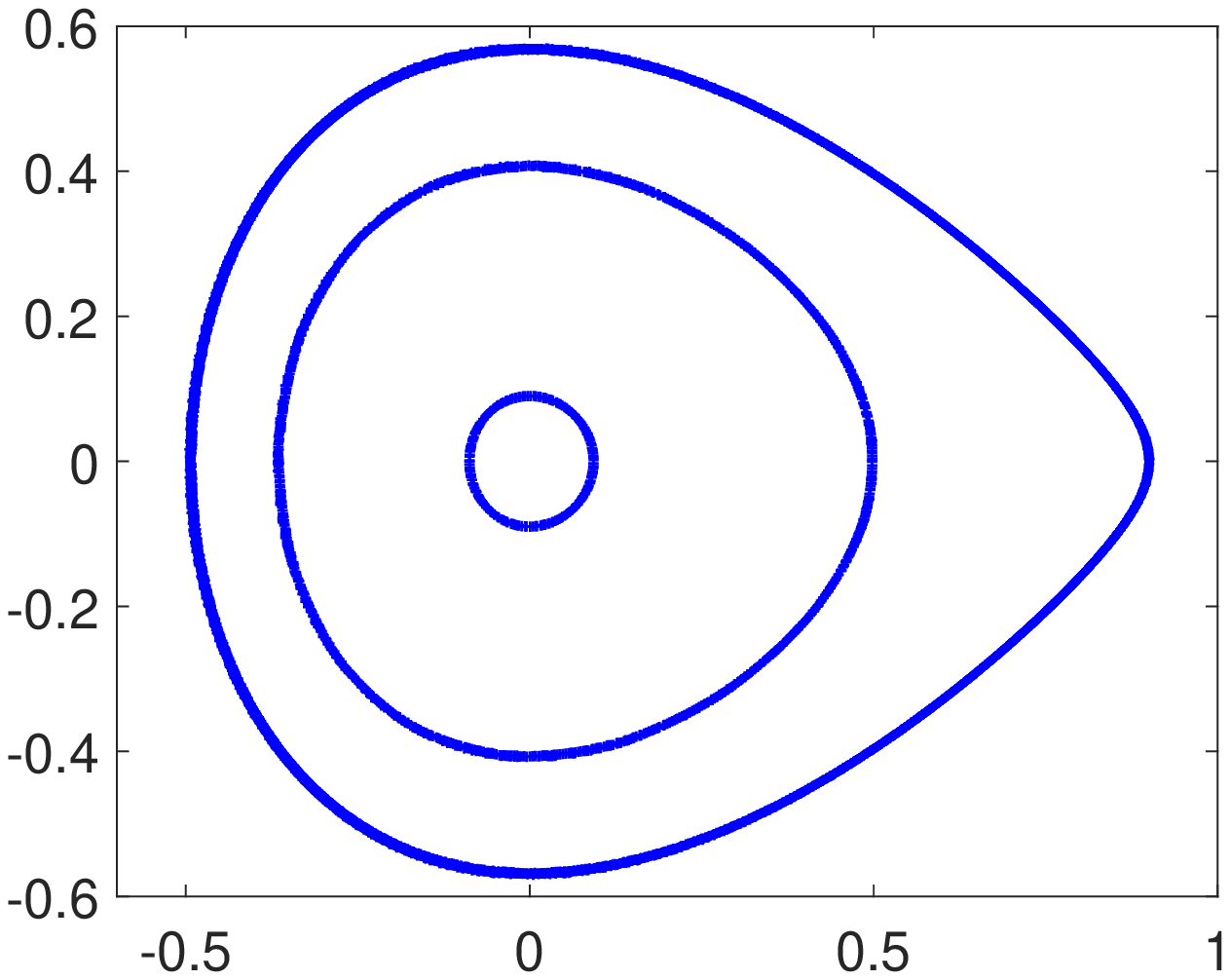}
\caption{$\eps=0.001$, $H=1/6$. \label{figpoincareb}}
\end{subfigure}
}
\caption{Poincar\'e cuts for the H\'enon-Heiles model. Pullback method with $S^{midpoint}_{\Delta t}$.}\label{figpoincare}
\end{figure}

\subsection{The nonlinear Klein-Gordon equation in the nonrelativistic limit regime - micro-macro method}

We consider the nonlinear Klein-Gordon (NKG) equation in the nonrelativistic limit regime, also studied numerically in \cite{bao-dong,faou-schratz2,cclm,faou-schratz}: 
\begin{equation}
\label{kg}
\eps \pa_{tt}u-\Delta u+\frac{1}{\eps}u+f(u)=0,\qquad x\in \RR^d,\quad t>0,
\end{equation}
with initial conditions given as
\begin{equation}
\label{kginit}
u(0,x)=\phi(x),\qquad \pa_tu(0,x)=\frac{1}{\eps}\gamma(x),\qquad x\in \RR^d.
\end{equation}
The unknown is the function $u(t,x):\,\RR^{1+d}\to \CC$ and the parameter $\eps>0$ is inversely proportional to the square of the speed of light. In our simulations, we take $d=1$ and the nonlinearity is $f(u)=4|u|^2u$. The limit $\eps\to 0$ in equation \eqref{kg}, \eqref{kginit}, referred to as the nonrelativistic limit, has been studied in \cite{kg1,kg2,kg3}. 

It is convenient to rewrite \eqref{kg} under the equivalent form of a first order system in time (see e.g. \cite{kg3}). Setting
\begin{equation}
\label{change}
v_+=u-i\eps(1-\eps\Delta)^{-1/2}\pa_t u,\qquad v_-=\overline{u}-i\eps(1-\eps\Delta)^{-1/2}\pa_t \overline{u},
\end{equation}
and denoting
$$f_\pm(v)=f\left(\frac{1}{2}(v_\pm+\overline{v_\mp})\right),$$
we obtain that \eqref{kg}, \eqref{kginit} is equivalent to the following system on the unknown $v=(v_+,v_-)$:
\begin{equation}
i\pa_t v_\pm=-\frac{1}{\eps}(1-\eps\Delta)^{1/2}v_\pm-(1-\eps\Delta)^{-1/2}f_\pm(v),
\label{kg2}
\end{equation}
\begin{equation}
\label{kg2init}
v(0,\cdot)=(v_+(0,\cdot),v_-(0,\cdot))=\left(\phi-i(1-\eps\Delta)^{-1/2}\gamma\,,\,\overline{\phi}-i(1-\eps\Delta)^{-1/2}\overline{\gamma}\right).
\end{equation}
Let us introduce the filtered unknowns
$$\widetilde v_\pm=e^{-i\frac{t}{\eps}\sqrt{1-\eps\Delta}}v_\pm.$$
These quantities satisfy the following equation:
\begin{equation}
\label{kg3}
\pa_t\widetilde v_\pm=i(1-\eps\Delta)^{-1/2}e^{-i\frac{t}{\eps}\sqrt{1-\eps\Delta}}f_\pm\left(e^{i\frac{t}{\eps}\sqrt{1-\eps\Delta}}\widetilde v\right).
\end{equation}
Note that \eqref{kg3} is under the form
$$ \pa_t\widetilde v_\pm(t)=f_\pm\left(\frac t\eps,t,\widetilde v(t)\right),$$ if we set
\begin{equation}
\label{cF1}
f_\pm(\theta,t,u)=i(1-\eps\Delta)^{-1/2}e^{-i\theta}e^{-itA_\eps}f_\pm\left(e^{i\theta}e^{itA_\eps}u\right)
\end{equation}
with
$$A_\eps=\frac{1}{\eps}\left(\sqrt{1-\eps\Delta}-1\right).$$
The system has two natural invariants:

\begin{itemize}
\item[--] the conserved charge (quadratic invariant)
\begin{align*}
Q=\IM \int_{\RR^d}\eps \dot u \,\overline{u}\,dx&=\frac{1}{4} \int_{\RR^d}(1-\eps\Delta)^{1/2}v_+\,\overline {v_+}\,dx-\frac{1}{4} \int_{\RR^d}(1-\eps\Delta)^{1/2}v_-\,\overline {v_-}\,dx\\
&=\frac{1}{4} \int_{\RR^d}(1-\eps\Delta)^{1/2}\widetilde v_+\,\overline {\widetilde v_+}\,dx-\frac{1}{4} \int_{\RR^d}(1-\eps\Delta)^{1/2}\widetilde v_-\,\overline {\widetilde v_-}\,dx
\end{align*}
\item[--] the energy
$$E=\eps \int_{\RR^d}|\dot u|^2\,dx+ \int_{\RR^d}|\nabla u|^2\,dx+\frac{1}{\eps} \int_{\RR^d}|u|^2\,dx+2 \int_{\RR^d}|u|^4\,dx.$$
\end{itemize}
\bs
In the simulations of this subsection, we take the following initial data:
\begin{equation}\label{initdat}
\phi(x)=\frac{1}{2-\cos x},\qquad \gamma(x)=\frac{1}{2-\sin x}.
\end{equation}
For Figure \ref{fig12}, the final time of the simulations is $T_{final}=0.25$. The computational domain in $x$ is $[0,2\pi]$. For the numerical evaluations of the function $f$ in our scheme, a spectral method is used in the $x$ variable and the fast Fourier transform is used in the practical implementation. For these tests, the reference solution is computed by our third order UA method $S^{extrap 3}_{\Delta t}$ applied to the standard averaging micro-macro method with 256 grid points in $x$, 128 grid points in $\theta$ and $\Delta t=T_{final}/4096$.
We use the $H^1$ relative error of a given numerical scheme which we define as
\begin{equation}
{\mathcal E}_{1}= \frac{\|u^{ref}(T_{final},\cdot)-u^{num}(T_{final},\cdot)\|_{H^1}}{\|u^{ref}(T_{final},\cdot)\|_{H^1}},
\end{equation}
where $u^{num}(t_{final},\cdot)$ is the approximated solution  obtained by the considered numerical scheme, at the final time $T_{final}$ of the simulation.

Let us present the results for the standard averaging micro-macro method. In this case, the analytic computation of the integrals in $\theta$ involves a too heavy system of equations and it is more convenient to compute these integrals numerically at each iteration of the scheme, by using fast Fourier transform techniques. In Figure \ref{fig12}, we have represented the error at $T_{final}=0.25$ as a function of $\Delta t$, for different values of $\eps$, for our UA scheme $S^{RK2}_{\Delta t}$ of order 2,  for our UA scheme $S^{extrap 3}_{\Delta t}$ of order 3 and for our UA scheme $S^{extrap 4}_{\Delta t}$ of order 4. These curves confirm that the schemes are UA respectively of order 2,  3 and 4. The numerical parameters are the following: we have taken 128 discretization points in the $x$ variable and $64$ discretization points in $\theta$ for the quadrature methods.

\begin{figure}[htb]
{
		\centering
\begin{subfigure}[t]{0.5\linewidth}
\includegraphics[width=1.05\linewidth]{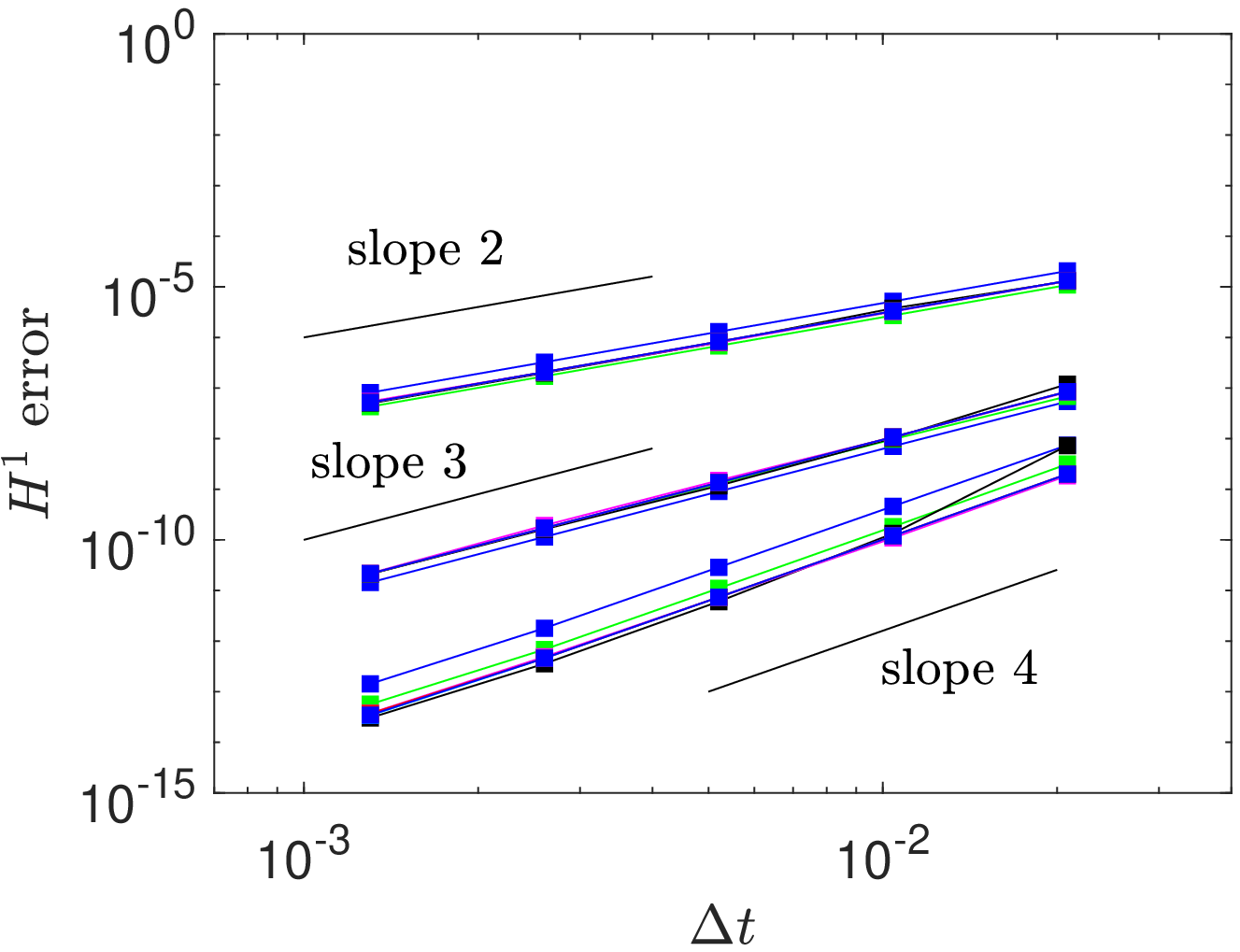}
\caption{Micro-macro method. \label{fig12}}
\end{subfigure}
\begin{subfigure}[t]{0.5\linewidth}
\includegraphics[width=1.05\linewidth]{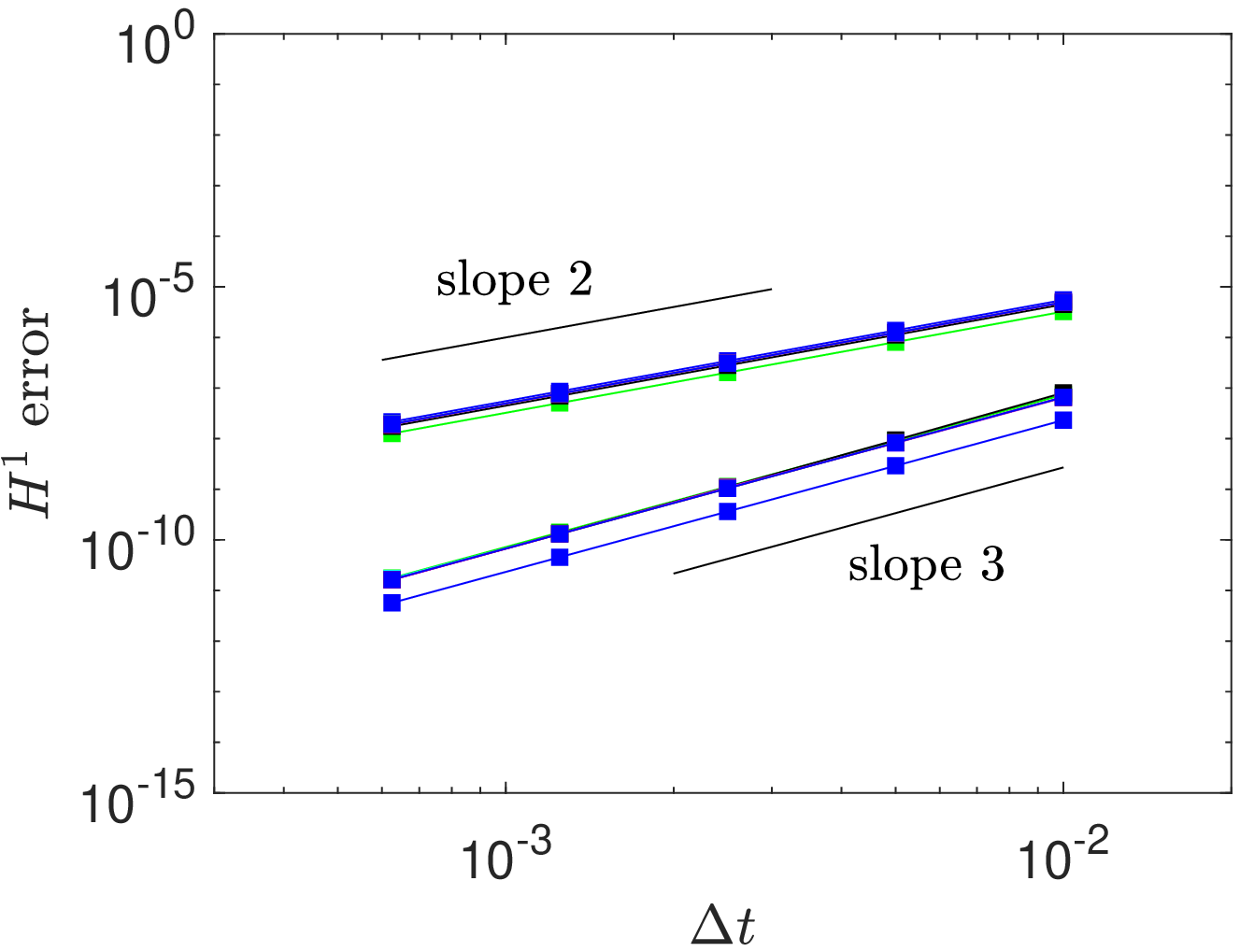}
\caption{Pullback method. \label{fig14}}
\end{subfigure}
}
\caption{NKG model. Error as a function of $\Delta t$ for  $\eps\in \{1, 10^{-1},10^{-2}, 10^{-3},10^{-4},10^{-5},10^{-6}\}$. 
Fig.\ts\ref{fig12} (micro-macro method):
UA scheme $S^{RK2}_{\Delta t}$ of order 2 (top curves), UA scheme $S^{extrap 3}_{\Delta t}$ of order 3 (middle curves) and UA scheme $S^{extrap 4}_{\Delta t}$ of order 4 (bottom curves).
Fig.\ts\ref{fig14} (pullback method):
scheme $S^{midpoint}_{\Delta t}$ of order 2 (top curves) and scheme $S^{compos 3}_{\Delta t}$ of order 3 (bottom curves).}\label{fig12zz}
\end{figure}

%\begin{figure}[htb]
%\begin{center}\includegraphics[width=0.65\textwidth]{NKG_figdt_micmac.eps}\end{center}
%\caption{NKG model, micro-macro method. Error as a function of $\Delta t$ for  $\eps\in \{1, 10^{-1},10^{-2}, 10^{-3},10^{-4},10^{-5},10^{-6}\}$. UA scheme $S^{RK2}_{\Delta t}$ of order 2 (top curves), UA scheme $S^{extrap 3}_{\Delta t}$ of order 3 (middle curves) and UA scheme $S^{extrap 4}_{\Delta t}$ of order 4 (bottom curves).}\label{fig12}
%\end{figure}
%\begin{figure}[H]
%\begin{center}\hspace*{-5mm}\includegraphics[width=0.55\textwidth]{figeps_micmac2.eps}\hspace*{-5mm}\includegraphics[width=0.55\textwidth]{figeps_micmac3.eps}
%\caption{NKG model, micro-macro method. Error as a function of $\eps$ for $\Delta t=2^{-k},\,k\in\{6, 7,\cdots,11\}$. Left: UA scheme $S^{RK2}_{\Delta t}$ of order 2. Right: UA scheme $S^{extrap 3}_{\Delta t}$ of order 3.}\label{fig13}
%\end{center}
%\end{figure}

\subsection{The nonlinear Klein-Gordon equation in the nonrelativistic limit regime - pullback method}
\sloppy

We now present the results of the pullback method with the schemes $S^{midpoint}_{\Delta t}$ and $S^{compos 3}_{\Delta t}$. In Figure \ref{fig14}, we have plotted the error as a function of $\Delta t$ for different values of $\eps$. For this figure, we have discretized the $\theta$ variable with $64$ points and the $x$ variable with $128$ points, the initial data is \eqref{initdat} and the final time of the simulation is $T_{final}=0.25$. These numerical experiments confirm that our schemes $S^{midpoint}_{\Delta t}$ and $S^{compos 3}_{\Delta t}$ are respectively UA of order 2 and 3.

%\begin{figure}[htb]
%\begin{center}\includegraphics[width=0.65\textwidth]{NKGfigdt_pullback.eps}\end{center}
%\caption{NKG model, pullback method, scheme $S^{midpoint}_{\Delta t}$ of order 2 (top curves) and scheme $S^{compos 3}_{\Delta t}$ of order 3 (bottom curves). Error as a function of $\Delta t$ for  $\eps\in \{1, 10^{-1},10^{-2}, 10^{-3},10^{-4},10^{-5},10^{-6}\}$.}\label{fig14}
%\end{figure}
%\begin{figure}[H]
%\begin{center}\includegraphics[width=0.65\textwidth]{figeps_pullback2.eps}\end{center}
%\caption{NKG model, pullback method, scheme $S^{midpoint}_{\Delta t}$ of order 2 and scheme $S^{compos 3}_{\Delta t}$ of order 3. Error as a function of $\eps$ for $\Delta t=2^{-k},\,k\in\{6, 7,\cdots,11\}$.}\label{fig15}
%\end{figure}

In Figure \ref{fig16}, we have represented the long time ($t\leq 1000$) conservation of the quadratic invariant $Q$ for our two schemes $S^{midpoint}_{\Delta t}$ and $S^{compos 3}_{\Delta t}$ and in Figure \ref{fig17} we have plotted the evolution of the relative error on the energy. The maximal absolute value of the error on $E$ is $10^{-8}$ for the second order scheme and is $2\times 10^{-10}$ for the third order scheme. The initial data, with a non trivial value of $Q$, is 
$$\phi(x)=(1+i)(\cos x+\sin x),\qquad \gamma(x)=(1-i/2)\cos x+(1/2+i)\sin x.$$
The blue curves correspond to $S^{midpoint}_{\Delta t}$ with $\Delta t=0.01$ and the red curves correspond to $S^{compos 3}_{\Delta t}$ with $\Delta t=0.01$. In all the simulations, we have $\eps=0.0001$ and we have taken  64 points for the discretization of the $x$ variable as well as for the $\theta$ variable. The quadratic invariant $Q$ is an exact invariant (up to round-off errors) of the two schemes, and we observe no energy drift over long times.

\begin{figure}[htb]
{
		\centering
\begin{subfigure}[t]{0.5\linewidth}
\includegraphics[width=1.05\linewidth]{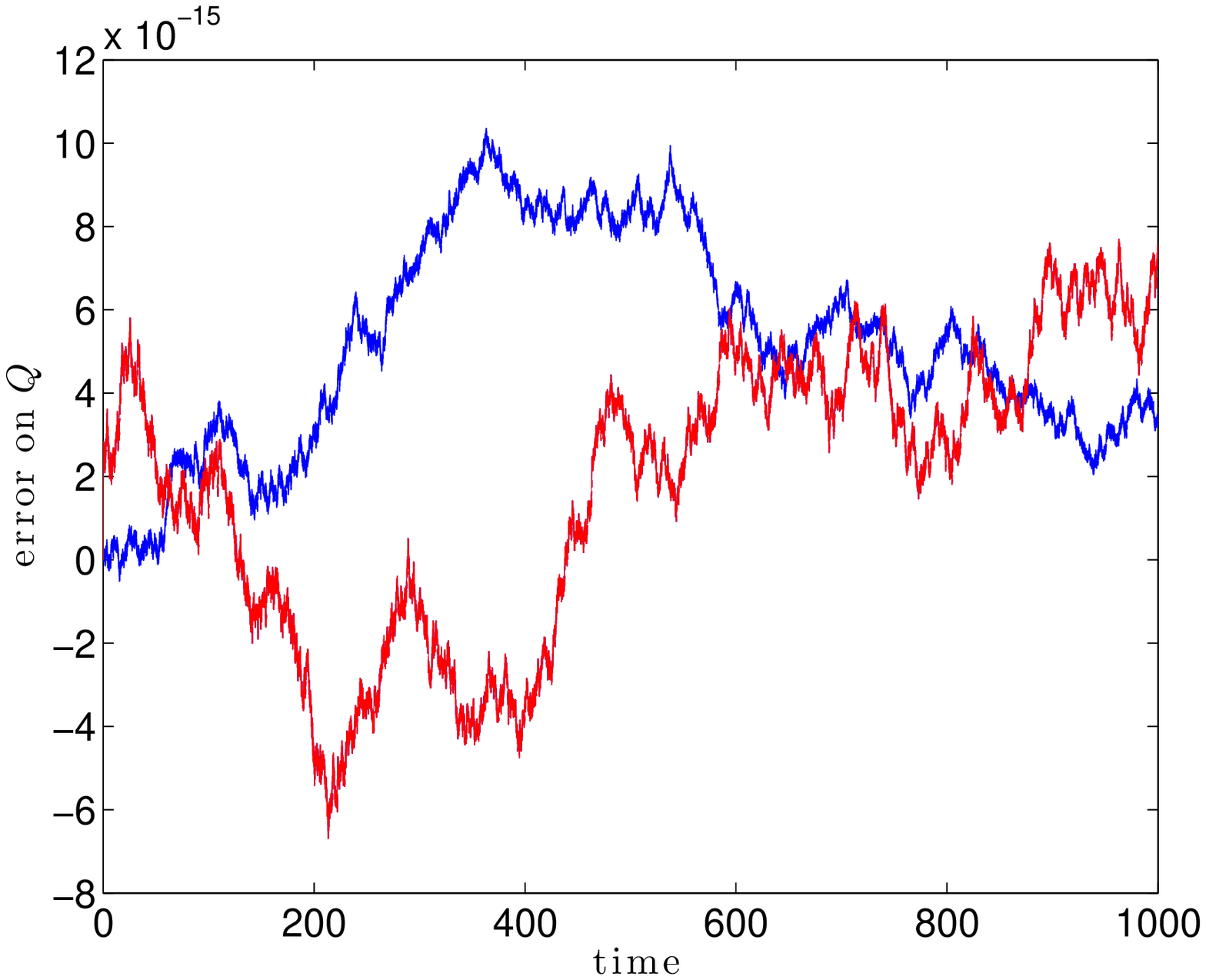}
\caption{quadratic invariant error. \label{fig16}}
\end{subfigure}
\begin{subfigure}[t]{0.5\linewidth}
\includegraphics[width=1.05\linewidth]{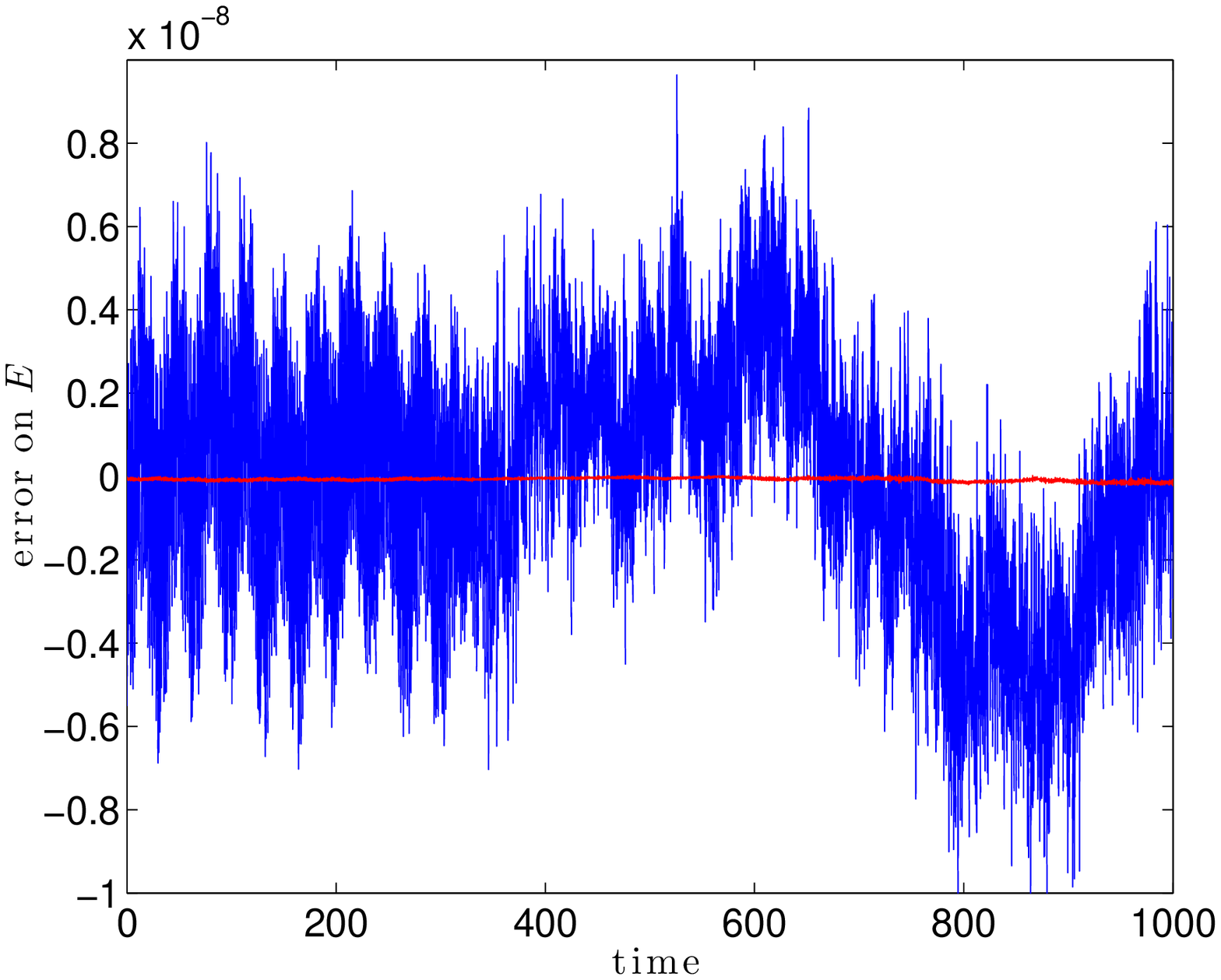}
\caption{Energy error. \label{fig17}}
\end{subfigure}
}
\caption{NKG model with $\eps=10^{-4}$, pullback method. Long-time evolution of the relative error on the quadratic invariant and on the energy. Blue: UA scheme $S^{midpoint}_{\Delta t}$ with $\Delta t=0.01$. Red: UA scheme $S^{compos 3}_{\Delta t}$ with $\Delta t=0.01$.}\label{fig17zz}
\end{figure}

\section{Conclusion}
We have shown that by transforming the original problem (\ref{eq:HOP}) through a change of variables inspired from the theory of averaging, it becomes possible to apply standard nonstiff numerical schemes (i.e. integrators which would properly solve equation  (\ref{eq:HOP}) with $\eps=1$) for all values of $\eps \in ]0,1]$ with a computational cost and accuracy both independent of $\eps\in]0,1]$. This has been done in four different ways whose merits have been discussed in details. Roughly speaking, stroboscopic averaging used in conjunction with the pullback method leads to geometric uniformly accurate integrators while standard averaging associated with a micro-macro decomposition leads to comparatively cheap uniformly accurate methods (which are not geometric though). As far as efficiency is concerned, the most relevant question is to estimate the critical threshold value $\eps_c$ below which our techniques become superior to standard schemes. Although such an $\eps_c$ clearly always exists, it is delicate to estimate its value  as it strongly depends on the problem itself and on the choices of implementation made in the various methods: for instance, we have exploited here the fact that both the change of variable and the averaged vector field can be computed at first and second orders {\em exactly} for the Henon-Heiles problem. Whenever this is possible, most of the additional formulae required by our techniques can be pre-computed, making the formers indeed very competitive even for large values of $\eps$ (i.e. $\eps_c$ is relatively large). The situation is different when this option is precluded (e.g. for the Klein-Gordon equation) and in that case the value of $\eps_c$ is undoubtedly  smaller.  However, we wish to emphasize that whatever its value might be, our technique remains of interest for variants of the model equation (\ref{eq:HOP}) where $\eps$ may undergo large variations. 
%--------------------------------------------------------------
\bigskip

\noindent \textbf{Acknowledgements.}\
This work was partially supported by the Swiss National Science Foundation, grants No: 200020\_144313/1 and 200021\_162404 and by the ANR project Moonrise ANR-14-CE23-0007-01.
%The computations were performed at University of Geneva on the Baobab cluster.

\appendix
\renewcommand{\thesection}{A}
\section*{Appendix}
This section is devoted to the proof of Proposition \ref{prop:mta2} and Theorem \ref{th:delta}. We first recall the two following basic results from \cite{ccmm}:
%%%%%%%%%%%%%% Lemma
\begin{lemma} [See Castella, Chartier, Murua and M\'ehats \cite{ccmm}]\label{lem:basic}
Let $0 < \delta < \rho \leq 2R$. Assume that the function $(\theta,u) \in \T \times {\cal K}_\rho \mapsto \varphi_\theta(u) \in  X_\C$ is analytic, and that $\varphi_\theta$ is a near-identity mapping, in the sense that
%%%%%%%%%%%%%%%%%%%%%%%%%%%%%%%%%%%%%%%%%%%%%%%%%
$$
\|\varphi - {\rm Id}\|_\rho \leq \frac{\delta}{2}.
$$
%%%%%%%%%%%%%%%%%%%%%%%%%%%%%%%%%%%%%%%%%%%%%%%%%
Then $\|\partial_u \varphi \|_{\rho-\delta} \leq \frac32$, the mapping $\partial_u \langle \varphi \rangle ^{-1}$ is well-defined and analytic on ${\cal K}_{\rho-\delta}$ with 
$\|\partial_u \langle \varphi \rangle^{-1} \|_{\rho-\delta} \leq 2$, and 
%%%%%%%%%%%%%%%%%%%%%%%%%%%%%%%%%%%%%%%%%%%%%%%%%
the mappings $(\theta,u) \in \T \times {\cal K}_{\rho-\delta} \mapsto \Lambda (\varphi)_{\theta}(u)$ 
and $(\theta,u) \in \T \times K_{\rho-\delta} \mapsto \Gamma^\eps (\varphi)_{\theta}(u)$
%(see \fref{Lambda} and \fref{Gamma})
are well-defined and analytic. In addition, the following bounds hold for all $\eps\geq 0$,
%%%%%%%%%%%%%%%%%%%%%%%%%%%%%%%%%%%%%%%%%%%%%%%%%
$$
\|\Lambda (\varphi)\|_{\rho-\delta} \leq 4 \, M \quad \mbox{ and } \quad \|\Gamma^\eps(\varphi) - {\rm Id}\|_{\rho-\delta} \leq 4  \, M  \, \eps.
$$
%%%%%%%%%%%%%%%%%%%%%%%%%%%%%%%%%%%%%%%%%%%%%%%%%
\end{lemma}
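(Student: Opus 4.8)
\emph{Proof sketch (proposal).}~The plan is to derive every bound from Cauchy's estimate for analytic maps between complex Banach spaces, together with a Neumann series for the inverse, keeping track of the nesting of the sets ${\cal K}_{\rho'}$. The basic tool is: if $g$ is analytic and bounded on ${\cal K}_\rho$, then $\|\partial_u g\|_{\rho-\delta}\le\delta^{-1}\|g\|_\rho$; this follows by fixing $u\in{\cal K}_{\rho-\delta}$ and a direction $h$ with $\|h\|=1$, noting that $z\mapsto g(u+zh)$ is analytic on the disc $\{|z|<\delta\}$ (since $\|zh\|<\delta$ keeps $u+zh$ inside ${\cal K}_\rho$), and applying the Cauchy formula to its derivative at $z=0$.

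First I would bound $\partial_u\varphi$: writing $\varphi_\theta={\rm Id}+(\varphi_\theta-{\rm Id})$ and feeding the near-identity hypothesis $\|\varphi-{\rm Id}\|_\rho\le\delta/2$ into the Cauchy estimate yields $\|\partial_u\varphi-{\rm Id}\|_{\rho-\delta}\le 1/2$, hence $\|\partial_u\varphi\|_{\rho-\delta}\le 3/2$. Since averaging in $\theta$ does not increase the sup-norm, $\langle\varphi\rangle$ satisfies the same near-identity bound, so $\|\partial_u\langle\varphi\rangle-{\rm Id}\|_{\rho-\delta}\le 1/2$; a Neumann series then shows that $\partial_u\langle\varphi\rangle$ is pointwise invertible on ${\cal K}_{\rho-\delta}$ with $\|(\partial_u\langle\varphi\rangle)^{-1}\|_{\rho-\delta}\le 2$, and the uniform convergence of the series transmits analyticity to the inverse.

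Next I would estimate $\Lambda(\varphi)$. For $u\in{\cal K}_{\rho-\delta}$ one has $\|\varphi_\theta(u)-u\|\le\delta/2$, hence $\varphi_\theta(u)\in{\cal K}_\rho\subseteq{\cal K}_{2R}$, so $f_\theta\circ\varphi_\theta$ is well defined with $\|f\circ\varphi\|_{\rho-\delta}\le\|f\|_{2R}\le M$, and likewise $\|\langle f\circ\varphi\rangle\|_{\rho-\delta}\le M$. Inserting these bounds and those of the previous paragraph into the definition \eqref{Lambda} of $\Lambda$ gives $\|\Lambda(\varphi)\|_{\rho-\delta}\le M+\frac32\cdot 2\cdot M=4M$; moreover $\Lambda(\varphi)$ is analytic on ${\cal K}_{\rho-\delta}$, being assembled from analytic maps by composition, product, differentiation, $\theta$-averaging (analyticity survives the integration in $\theta$) and the analytic inverse found above. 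Finally, by \eqref{Gamma}, $\Gamma^\eps(\varphi)-{\rm Id}$ is $\eps$ times an average/antiderivative in $\theta$ of $\Lambda(\varphi)$, hence analytic, and for stroboscopic averaging ($\gamma=0$) the estimate $\|\Gamma^\eps(\varphi)-{\rm Id}\|_{\rho-\delta}\le\eps\|\Lambda(\varphi)\|_{\rho-\delta}\le 4M\eps$ is immediate; for $\gamma=1$ one uses that the mean-zero primitive $G_\theta$ of the periodic map $\Lambda(\varphi)$ (characterized by $\partial_\theta G=\Lambda(\varphi)$ and $\langle G\rangle=0$) has sup-norm at most $\int_0^1\|\Lambda(\varphi)_\xi\|\,d\xi\le 4M$ (connect any two of its values along an arc of $\T$ and then invoke the vanishing mean), which gives the same constant.

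I expect the only delicate points to be the domain bookkeeping — arranging that each composition lands in the set on which the next map is analytic, which is why the loss from $\rho$ to $\rho-\delta$ is spent all at once — and the verification that analyticity, in the sense appropriate to the complex Banach space $X_\C$, is preserved under integration in $\theta$ and under the Neumann inversion; the remaining estimates are a routine application of the Cauchy inequality.
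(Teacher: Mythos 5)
Your proposal is correct. Note that the paper itself does not prove Lemma \ref{lem:basic}: it is recalled verbatim from \cite{ccmm}, so there is no in-paper argument to compare against, and your route (a Cauchy estimate on the shrunken domain ${\cal K}_{\rho-\delta}$ giving $\|\partial_u\varphi-{\rm Id}\|_{\rho-\delta}\le\frac12$, a Neumann series for $(\partial_u\langle\varphi\rangle)^{-1}$ with bound $2$, the inclusion $\varphi_\theta({\cal K}_{\rho-\delta})\subset{\cal K}_{\rho}\subset{\cal K}_{2R}$ to control $f\circ\varphi$, and the count $M+\frac32\cdot 2\cdot M=4M$) is exactly the standard argument of that reference. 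The only step requiring care is the case $\gamma=1$ in \eqref{Gamma}, where a naive triangle inequality would yield $8M\eps$; your bound on the mean-free primitive is valid, since writing $G_\theta(u)=\int_0^1\bigl(\int_\tau^\theta\Lambda(\varphi)_\xi(u)\,d\xi\bigr)d\tau$ gives $\|G\|_{\rho-\delta}\le\int_0^1\|\Lambda(\varphi)_\xi\|_{\rho-\delta}\,d\xi\le 4M$ (in fact the explicit weight $\tfrac{\theta^2}{2}+\tfrac{(1-\theta)^2}{2}\le\tfrac12$ even gives $2M$), so the stated constant $4M\eps$ holds for both choices of $\gamma$.
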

Lemma \ref{lem:basic} shows that, starting from a function  $(\theta,u) \in \T \times {\cal K}_{2R} \mapsto \varphi_\theta(u) \in  X_\C$,  we can consider iterates $\left(\Gamma^\eps\right)^k (\varphi)_\theta$
at the cost of a gradual thinning of their domains of analyticity.
The following contraction property holds.
%%%%%%%%%%%%%%%%%%%%%%%%%%%%%%%%%%%%%%%%%%%%%%%%%
%%%%%%%%%%%%%%%% Lemma
\begin{lemma} [See Castella, Chartier, Murua and M\'ehats \cite{ccmm}] \label{lem:contract}
Let $0 < \delta < \rho \leq 2R$ and consider two periodic, near-identity mappings $(\theta,u) \in \T \times {\cal K}_\rho \mapsto \varphi_\theta(u)$ and $(\theta,u) \in \T \times {\cal K}_\rho \mapsto \tilde \varphi_\theta(u)$, analytic on ${\cal K}_\rho$ and satisfying 
%%%%%%%%%%%%%%%%%%%%%%%%%%%%%%%%%%%%%%%%%%%%%%%%%
$$
\|\varphi-{\rm Id}\|_\rho \leq \frac{\delta}{2} \quad \mbox {and} \quad\|\tilde \varphi-{\rm Id}\|_\rho \leq \frac{\delta}{2}. 
$$
%%%%%%%%%%%%%%%%%%%%%%%%%%%%%%%%%%%%%%%%%%%%%%%%%
Then the following estimates hold for all $\eps\geq 0$, 
%%%%%%%%%%%%%%%%%%%%%%%%%%%%%%%%%%%%%%%%%%%%%%%%%
$$
\|\Lambda (\varphi) - \Lambda (\tilde \varphi)\|_{\rho-\delta} \leq \frac{16 \, M}{\delta}  \|\varphi - \tilde \varphi\|_{\rho}
\quad  \mbox{and} \quad 
 \|\Gamma^\eps (\varphi) - \Gamma^\eps (\tilde \varphi)\|_{\rho-\delta}
\leq  \frac{16 \, M \eps}{\delta}  \|\varphi - \tilde \varphi\|_\rho.
$$
%%%%%%%%%%%%%%%%%%%%%%%%%%%%%%%%%%%%%%%%%%%%%%%%%
\end{lemma}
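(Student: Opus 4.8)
The plan is to reduce the bound on $\Gamma^\eps$ to the bound on $\Lambda$, and then to prove the latter by a telescoping argument combined with Cauchy estimates. For the reduction, note from \eqref{Gamma} that $\Gamma^\eps(\varphi)_\theta(u)-\Gamma^\eps(\tilde\varphi)_\theta(u)$ equals $\eps$ times $\int_0^\theta\big(\Lambda(\varphi)_\xi-\Lambda(\tilde\varphi)_\xi\big)(u)\,d\xi$, minus its $\T$-average when $\gamma=1$. Since $\theta\in\T\equiv[0,1]$, an elementary estimate — using $\theta\le 1$, and, for $\gamma=1$, Fubini's theorem to rewrite $\langle\int_0^\cdot\rangle$ as $\int_0^1(1-\xi)(\cdot)\,d\xi$ — bounds the $\theta$-dependent prefactor multiplying $\Lambda(\varphi)-\Lambda(\tilde\varphi)$ by $1$ in sup-norm, so $\|\Gamma^\eps(\varphi)-\Gamma^\eps(\tilde\varphi)\|_{\rho-\delta}\le\eps\,\|\Lambda(\varphi)-\Lambda(\tilde\varphi)\|_{\rho-\delta}$, and the second inequality of the lemma follows from the first.

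For the $\Lambda$ estimate I would abbreviate $A_\theta=\partial_u\varphi_\theta$, $B=(\partial_u\langle\varphi\rangle)^{-1}$, $C=\langle f\circ\varphi\rangle$ (and $\tilde A_\theta,\tilde B,\tilde C$ for $\tilde\varphi$), so that $\Lambda(\varphi)_\theta=f_\theta\circ\varphi_\theta-A_\theta B C$, and split
$$
\Lambda(\varphi)_\theta-\Lambda(\tilde\varphi)_\theta
=\big(f_\theta\circ\varphi_\theta-f_\theta\circ\tilde\varphi_\theta\big)
-(A_\theta-\tilde A_\theta)B C
-\tilde A_\theta(B-\tilde B)C
-\tilde A_\theta\tilde B(C-\tilde C).
$$
All four pieces are to be estimated on ${\cal K}_{\rho-\delta}$. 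The near-identity hypotheses $\|\varphi-{\rm Id}\|_\rho\le\delta/2$ and $\|\tilde\varphi-{\rm Id}\|_\rho\le\delta/2$ guarantee that $\varphi_\theta$, $\tilde\varphi_\theta$, and every point of the segment joining their values, map ${\cal K}_{\rho-\delta}$ into ${\cal K}_{\rho-\delta/2}\subset{\cal K}_{2R}$, so that $f_\theta$ and $\partial_u f_\theta$ are available there; moreover Lemma \ref{lem:basic} already provides $\|A_\theta\|_{\rho-\delta},\|\tilde A_\theta\|_{\rho-\delta}\le 3/2$, the invertibility of $\partial_u\langle\varphi\rangle$ and $\partial_u\langle\tilde\varphi\rangle$ with $\|B\|_{\rho-\delta},\|\tilde B\|_{\rho-\delta}\le 2$, and $\|C\|_{\rho-\delta},\|\tilde C\|_{\rho-\delta}\le M$.

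It then remains to bound each piece. For $f_\theta\circ\varphi_\theta-f_\theta\circ\tilde\varphi_\theta$ I would use the mean value inequality together with the Cauchy estimate $\|\partial_u f_\theta\|_{\rho-\delta/2}\le 2M/\delta$ (the ball of radius $2R-\rho+\delta/2\ge\delta/2$ around any point of ${\cal K}_{\rho-\delta/2}$ lying in ${\cal K}_{2R}$ since $\rho\le 2R$), giving the bound $(2M/\delta)\|\varphi-\tilde\varphi\|_\rho$; the same applies to $C-\tilde C=\langle f\circ\varphi-f\circ\tilde\varphi\rangle$. For $A_\theta-\tilde A_\theta=\partial_u(\varphi_\theta-\tilde\varphi_\theta)$ the Cauchy estimate on a ball of radius $\delta$ gives $\|\varphi-\tilde\varphi\|_\rho/\delta$, and likewise $\|\partial_u\langle\varphi-\tilde\varphi\rangle\|_{\rho-\delta}\le\|\varphi-\tilde\varphi\|_\rho/\delta$; the resolvent identity $B-\tilde B=-B\,\partial_u\langle\varphi-\tilde\varphi\rangle\,\tilde B$ then controls $B-\tilde B$ by $(4/\delta)\|\varphi-\tilde\varphi\|_\rho$. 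Multiplying by the uniform bounds on the remaining factors and adding the four contributions ($2M/\delta$, $2M/\delta$, $6M/\delta$, $6M/\delta$ times $\|\varphi-\tilde\varphi\|_\rho$), the constants combine to exactly $16M/\delta$; all steps commute with $\sup_\theta$, so the estimate is uniform in $\theta$.

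The main obstacle is not any single estimate but the bookkeeping of the nested analyticity domains: one must check that the loss of width $\delta$, split as $\delta/2+\delta/2$, simultaneously leaves room for the Cauchy estimate of $f$, accommodates the images of the near-identity maps, and keeps $\partial_u\langle\varphi\rangle$ and $\partial_u\langle\tilde\varphi\rangle$ invertible with the stated inverse bound — all of which is precisely what the hypotheses are calibrated for. Once the domains are fixed, the proof is the routine telescoping above; since the statement is quoted from \cite{ccmm}, the full details may be found there.
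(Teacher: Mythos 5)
Your proof is correct, and it follows essentially the same route as the source: the paper itself only quotes this lemma from \cite{ccmm} without proof, but your telescoping of $f_\theta\circ\varphi_\theta-A_\theta BC$, the bounds $\|A\|\leq\frac32$, $\|B\|\leq 2$, $\|C\|\leq M$ from Lemma \ref{lem:basic}, the Cauchy estimates $\|\partial_u f\|_{\rho-\delta/2}\leq 2M/\delta$ and $\|A-\tilde A\|_{\rho-\delta}\leq\|\varphi-\tilde\varphi\|_\rho/\delta$, and the resolvent bound $\|B-\tilde B\|_{\rho-\delta}\leq\frac4\delta\|\varphi-\tilde\varphi\|_\rho$ are exactly the ingredients used in \cite{ccmm} and reused verbatim in the paper's appendix proof of the refined Lemma \ref{lem:basicdiff}, with the contributions $2+2+6+6$ summing to $16M/\delta$. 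You also handle correctly the one point where a naive triangle inequality would lose a factor $2$ in the $\Gamma^\eps$ estimate, by rewriting the averaged term in \eqref{Gamma} via Fubini as $\int_0^1(1-\xi)(\cdot)\,d\xi$ so that the $\theta$-dependent kernel is bounded by $1$ and the constant $16M\eps/\delta$ is preserved.
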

%%%%%%%%%%%%%%%% Lemma
\begin{proof}[Proof of Proposition \ref{prop:mta2}]
We assume that all $\Phi^{[k]}$ are defined by (\ref{eq:phiiter}) in the stroboscopic case (all functions  $G^{[k]}$ are taken null). The following properties were already proved in \cite{ccmm}: 
\begin{itemize}
\item[(i)] the maps $\Phi^{[k]}$ and  $F^{[k]} := \langle  \partial_u \Phi^{[k]} \rangle^{-1} \langle f \circ  \Phi^{[k]} \rangle$ are well-defined and analytic on $K_{R_k}$ for all $k=0,\ldots,n+1$;
\item[(ii)] the estimates 
\begin{align*}
 \left\|\Phi^{[k]}-{\rm id}\right\|_{R_{k}} \leq \frac{r_n}{2}, \quad  \left\|F^{[k]}\right\|_{R_{k+1}} \leq 2M \; \mbox{ and } \; \left\|\Phi^{[k+1]}-\Phi^{[k]}\right\|_R \leq C \left( 2 (n+1) \frac{\eps}{\eps_0} \right)^k,
\end{align*}
hold for some positive constant $C$ independent of $n$, $k$ and $\eps$.
\end{itemize}
From equation (\ref{eq:phiiter}) with $G^{[k+1]} \equiv 0$ and Assumption \ref{ass:f}, it is obvious that $\Phi^{[k]}$ is $(p+1)$-times continuously differentiable w.r.t.\,$\theta$. We have furthermore, for $0 \leq k \leq n$
\begin{align*}
\partial_\theta \Phi^{[k+1]}_\theta = \eps \left(f_\theta \circ \Phi^{[k]}_\theta - 
 \partial_u \Phi^{[k]}_\theta F^{[k]}\right)
\end{align*}
so that, by differentiating $\nu \leq p$ times w.r.t.\,$\theta$ with the help of the Leibniz and the Fa\`a di Bruno formulae, we obtain  
\begin{align*}
\frac{1}{\eps} \partial^{\nu+1}_\theta \Phi^{[k+1]}_\theta &= \partial_\theta^\nu f_\theta \circ \Phi^{[k]}_\theta +  \sum_{q=1}^\nu \left(\hskip-1ex\begin{array}{c} \nu \\ q \end{array} \hskip-1ex\right) \sum_{\bf i} C^{q}_{\bf i} (\partial^{\nu-q}_\theta \partial^{|{\bf i}|}_u f_\theta) \circ \Phi^{[k]}_\theta
\left((\partial_\theta \Phi^{[k]}_\theta)^{i_1}, \ldots,(\partial^q_\theta \Phi^{[k]}_\theta)^{i_q} \right) \\
&-\partial_\theta^\nu \partial_u \Phi^{[k]}_\theta F^{[k]}
\end{align*}
where the inner sum runs over multi-indices ${\bf i}=(i_1,\ldots,i_q) \in \N^q$ such that $i_1+2i_2+\ldots+qi_q=q$ and where $|{\bf i}|=i_1+\ldots+i_q$ and  
$$
C^{q}_{\bf i}=\frac{q!}{i_1! i_2! 2!^{i_2}\ldots i_q!q!^{i_q}}.
$$ 
Owing to (ii), for $u \in {\cal K}_{R_{k+1}}$, $\Phi^{[k]}_\theta(u)$ takes its values in ${\cal K}_{R_{k+1}+r_n/2}$, a set on which $\partial_\theta^j \partial_u^i f_\theta$ is bounded (due to Assumption \ref{ass:f} and Cauchy estimates for analytic functions) as follows:
$$
\left\| \partial_\theta^j \partial_u^i f \right\|_{R_{k+1}+r_n/2} \leq \frac{1}{(2R-(R_{k+1}+r_n/2))^i} M = \frac{M}{(k+\frac12)^i r_n^i}.
$$
Hence, if we denote  $\alpha_{k,\nu}=\|\partial^\nu_\theta \Phi^{[k]}\|_{R_k}/(\eps M)$, for $\nu \geq 1$, then we have 
\begin{align*}
\forall k \geq 1, \quad\forall \nu \geq 1, \quad \alpha_{k+1,\nu+1} \leq 1 + \frac{2 \eps M  \alpha_{k,\nu}}{r_n} + \sum_{q=1}^\nu \left(\hskip-1ex\begin{array}{c} \nu \\ q \end{array} \hskip-1ex\right)
\sum_{\bf i} C^{q}_{\bf i} \frac{(\eps M)^{|\bf{i}|} \alpha_k^{\bf{i}}}{r_n^{|\bf i|} (k+\frac12)^{|\bf i|} } 
\end{align*}
where we denote $\alpha_k^{\bf{i}} =\prod_{j=1}^q 
\alpha_{k,j}^{i_j}$ and use that (owing to (ii) and a Cauchy estimate)
$$
\left\|\partial_u \partial_\theta^\nu \Phi^{[k]} \, F^{[k]} \right\|_{R_{k+1}}
\leq \frac{1}{r_n} \left\|\partial_\theta^\nu \Phi^{[k]} \right\|_{R_{k}} \left\|F^{[k]} \right\|_{R_{k+1}} \leq \frac{2 M  \left\|\partial_\theta^\nu \Phi^{[k]} \right\|_{R_{k}}}{r_n}.
$$
Finally, the assumption that $(n+1) \eps \leq \eps_0 = \frac{R}{8M}$ leads to the inequality 
\begin{align} \label{eq:ineqalpha}
\alpha_{k+1,\nu+1} \leq  1 + \frac{\alpha_{k,\nu}}{4}+ \sum_{q=1}^\nu \left(\hskip-1ex\begin{array}{c} \nu \\ q \end{array} \hskip-1ex\right)
\sum_{\bf i} C^{q}_{\bf i} \frac{\alpha_k^{\bf{i}}}{(8 k+4)^{|\bf i|} } .
\end{align}
We now introduce the generating functions for $\xi \in \C$,
$$
g_k(\xi) = \sum_{\nu \geq 1} \alpha_{k,\nu} \frac{\xi^\nu}{\nu!}, \quad k=1,\ldots
$$
A direct estimation gives $\alpha_{1,1} \leq  2$ and $\alpha_{1,\nu} \leq 1$ for $\nu \geq 2$, so that $0 \leq g_1(\xi) \leq \xi+e^\xi-1$ for all $\xi \in \R_+$ and from (\ref{eq:ineqalpha}), we obtain (owing to the Leibniz and the Fa\`a di Bruno formulae)
\begin{align*} 
\alpha_{k+1,\nu+1} \leq  \frac{\alpha_{k,\nu}}{4}+ \sum_{q=0}^\nu \left(\hskip-1ex\begin{array}{c} \nu \\ q \end{array} \hskip-1ex\right) \left.\frac{d^q}{d\xi^q}
\exp\left(\frac{g_k(\xi)}{8 k+4}\right)\right|_{\xi=0} = \frac{\alpha_{k,\nu}}{4} + G^{(\nu)}_k(0)
\end{align*}
where 
$$
G_k(\xi) = \exp\left(\xi+\frac{g_k(\xi)}{8 k+4}\right)
$$
so that, using $\alpha_{k+1,1} \leq 4$, we have  for $\xi \in \R_+$ 
$$
g_{k+1}(\xi) \leq 3 \xi + \frac14 \int_0^\xi g_k(\s) ds + \int_0^\xi G_k(s) ds.
$$
Noticing that $\sup_{0 \leq \xi \leq 1} g_k(\xi) = g_k(1)$, the previous inequality leads to 
$$
g_{k+1}(1) \leq 3 + \frac14  g_k(1) + (e-1) \exp\left(\frac{g_k(1)}{8 k +4}\right)
$$
and an easy induction shows that the sequence $(g_k(1))_{k \geq 1}$ is upper-bounded by $8$. We conclude the proof by applying Cauchy estimates as follows: 
$$
\forall 1 \leq k \leq n+1, \, \forall 1 \leq \nu \leq p+1, \quad  \alpha_{k,\nu} = \left| \frac{d^\nu g_k}{d\xi^\nu} (0)  \right| \leq  \frac{\nu! \, \sup_{|\xi| \leq 1} |g_k(\xi)|}{1^\nu} \leq  \nu! \, g_k(1) \leq   8 \, \nu!
$$
\end{proof}
%%%%%%%%%%%%%%%%%%%%%%%%%%%%%%%%%%%%%%%%%%%%%%
\begin{proof}[Proof of Lemma \ref{lem:basicdiff}]
Under the assumptions (\ref{eq:nearidentity}), it has been shown in \cite{ccmm} that: 
\begin{enumerate}
\item[(a)] $\|\partial_u \varphi \|_{\rho-\delta} \leq \frac32$, the mapping $ \langle \partial_u \varphi \rangle ^{-1}$ is analytic on ${\cal K}_{\rho-\delta}$
and obeys $\| \langle \partial_u \varphi \rangle^{-1} \|_{\rho-\delta} \leq 2$;
\item[(b)] the mappings $\Lambda (\varphi)_{\theta}(u)$ 
and $\Gamma^\eps (\varphi)_{\theta}(u)$
%(see \fref{Lambda} and \fref{Gamma})
are analytic on ${\cal K}_{\rho-\delta}$ for all $\theta \in \T$, and 
%%%%%%%%%%%%%%%%%%%%%%%%%%%%%%%%%%%%%%%%%%%%%%%%%
$$
\forall 0 < \eps < \eps_0, \quad \|\Lambda (\varphi)\|_{\rho-\delta} \leq 4 \, M \quad \mbox{ and } \quad \|\Gamma^\eps(\varphi) - {\rm Id}\|_{\rho-\delta} \leq 4  \, M  \, \eps.
$$
\end{enumerate} 
%%%%%%%%%%%%%%%%%%%%%%%%%%%%%%%%%%%%%%%%%%%%%%%%%
Now, on the one hand, for a multi-index ${\bf i}=(i_1,\ldots,i_q) \in \N^q$, the algebraic identity 
$$
\prod_{j=1}^q X_j^{i_j} - \prod_{j=1}^q Y_j^{i_j} = 
\sum_{j=1}^q (X_j-Y_j) \prod_{l=1}^{j-1} Y_l^{i_l} \prod_{l=j+1}^{q} X_l^{i_l} \sum_{r=1}^{i_j}  X_j^{i_j-r} Y_j^{r-1}  
$$
leads, for any $(\Delta_1, \ldots, \Delta_q) \in X_{\C}^q$ and any $(\tilde \Delta_1, \ldots, \tilde \Delta_q) \in X_{\C}^q$, to 
\begin{align} \label{eq:polar}
&\left\| \partial_u^{|\bf{i}|} f \circ \tilde \varphi \; (\Delta_1^{i_1}, \ldots,\Delta_q^{i_q}) - \partial_u^{|\bf{i}|} f \circ \tilde \varphi \;  (\tilde \Delta_1^{i_1}, \ldots,\tilde \Delta_q^{i_q})  \right\|_{\rho-\delta}  \nonumber \\
%\leq  \left(\prod_{j=1}^{i} \|\tilde \Delta_j\| \right) \|\partial_u^{|\bf{i}|} f \circ \varphi-\partial_u^{i} f \circ \tilde \varphi\|_{\rho-\delta} \\
&\leq \| \partial_u^{|\bf{i}|} f \|_{\rho-\delta/2} \, \sum_{j=1}^{q} \| \Delta_j -\tilde \Delta_j\| \, \prod_{l=1}^{j-1} \|\Delta_l\|^{i_l} \, \prod_{l=j+1}^{q} \|\tilde \Delta_l\|^{i_l} \, \sum_{r=1}^{i_j} \|\Delta_j\|^{i_j-r} \|\tilde \Delta_j\|^{r-1}.
\end{align}
On the other hand, the Fa\`a di Bruno's formula with $1 \leq \nu < p$ gives  
\begin{align*}
\partial_\theta^\nu \left( f_\theta \circ \varphi_\theta - f_\theta \circ \tilde \varphi_\theta \right) & = \partial_\theta^\nu f_\theta \circ \varphi_\theta -\partial_\theta^\nu f_\theta \circ \tilde \varphi_\theta    \\
 + & \sum_{q=1}^\nu \left(\hskip -1ex \begin{array}{c}  \nu \\ q\end{array} \hskip -1ex 
\right) \sum_{\bf i}
C_{\bf i}^{q}  \partial^{\nu-q}_\theta \partial^{|{\bf i}|}_u f_\theta \circ \varphi_\theta
\left((\partial_\theta \varphi_\theta)^{i_1}, \ldots,(\partial^q_\theta \varphi_\theta)^{i_q} \right) \\
 - & \sum_{q=1}^\nu \left(\hskip -1ex \begin{array}{c}  \nu \\ q\end{array} \hskip -1ex 
\right) \sum_{\bf i}
C_{\bf i}^{q}  \partial^{\nu-q}_\theta \partial^{|{\bf i}|}_u f_\theta \circ \tilde \varphi_\theta
\left((\partial_\theta \tilde \varphi_\theta)^{i_1}, \ldots,(\partial^q_\theta \tilde \varphi_\theta)^{i_q} \right) 
\end{align*}
with the notations of Theorem \ref{prop:mta2}. Hence, using the decomposition in (\ref{eq:polar}) we have 
\begin{align*}
&\|\partial_\theta^\nu \left( f_\theta \circ \varphi_\theta - f_\theta \circ \tilde \varphi_\theta \right) \|_{\rho-\delta} \leq  \|\partial_u \partial_\theta^\nu f_\theta \|_{\rho-\delta/2} \;  \| \varphi - \tilde \varphi\|_{\rho-\delta}\\
& +  \sum_{q=1}^\nu  \left(\hskip -1ex \begin{array}{c}  \nu \\ q\end{array} \hskip -1ex 
\right) \sum_{\bf i} 
C_{\bf i}^{q} \;  \beta^{\bf i} \; \|\partial^{\nu-q}_\theta \partial^{|{\bf i}|+1}_u f \|_{\rho-\delta/2}  \, \| \varphi - \tilde \varphi\|_{\rho-\delta} \\
& + \sum_{q=1}^\nu \left(\hskip -1ex \begin{array}{c}  \nu \\ q\end{array} \hskip -1ex 
\right) \sum_{\bf i}
C_{\bf i}^{q} \;  \beta^{\bf i} \; \|\partial^{\nu-q}_\theta \partial^{|{\bf i}|}_u f \|_{\rho-\delta/2} 
\sum_{j=1}^q \frac{i_j}{\beta_j} \| \partial_\theta^j (\varphi - \tilde \varphi)\|_{\rho-\delta} 
%\, \prod_{l=1}^{j-1} \left(\frac{\alpha_l}{l!}\right)^{i_l} \, \prod_{l=j+1}^{q} %\left(\frac{\tilde \alpha_l}{l!}\right)^{i_l} \, 
%\sum_{r=1}^{i_j} \left(\frac{\alpha_j}{j!}\right)^{i_j-r} \left(\frac{\tilde %\alpha_j}{j!}\right)^{r-1}.
\end{align*}
Upon using Cauchy estimates, we obtain 
\begin{align*}
\|\partial_\theta^\nu \left( f \circ \varphi - f \circ \tilde \varphi \right) \|_{\rho-\delta} \leq & \frac{2M}{\delta} \;  \| \varphi - \tilde \varphi\|_{\rho-\delta} + \frac{2M}{\delta} \| \varphi - \tilde \varphi\|_{\rho-\delta} \; \sum_{q=1}^\nu \left(\hskip -1ex \begin{array}{c}  \nu \\ q\end{array} \hskip -1ex 
\right) \sum_{\bf i}
\tilde C_{\bf i}^{q}  \,  \left(\frac{16 M \eps}{\delta}\right)^{|{\bf i}|} \\
& +  \frac{1}{8  \eps}  \sum_{q=1}^\nu \left(\hskip -1ex \begin{array}{c}  \nu \\ q\end{array} \hskip -1ex 
\right)  \sum_{\bf i}
\tilde C_{\bf i}^{q} \,  \left(\frac{16 M \eps }{\delta}\right)^{|{\bf i}|} \sum_{j=1}^q \frac{i_j}{j!} \, \| \partial_\theta^j (\varphi - \tilde \varphi)\|_{\rho-\delta} \\
\leq & \frac{2M}{\delta} P_\nu \left(\frac{16 M  \eps}{\delta}\right)  \| \varphi - \tilde \varphi\|_{\rho,\nu}
\end{align*}
where we denote $\tilde C_{\bf i}^q =  C_{\bf i}^q \; \prod_{j=1}^q j!^{i_j}$ and 
\begin{align*}
P_\nu(\xi) = 1+\sum_{q=1}^\nu \left(\hskip -1ex \begin{array}{c}  \nu \\ q\end{array} \hskip -1ex 
\right)  \sum_{\bf i}
\tilde C_{\bf i}^{q} \,  \left(\xi^{|{\bf i}|} + |{\bf i}| \,  \xi^{|{\bf i}|-1}\right)= \left. \frac{d^\nu} {d x^\nu} \left(\frac{  e^{x+\frac{\xi x}{1-x}}    }    {1-x}                                       \right) \right|_{x=0}.
\end{align*} 
Considering now the second part of $\partial_\theta^\nu (\Lambda(\varphi)_\theta-\Lambda(\tilde \varphi)_\theta)$ and denoting $F=\langle \partial_u \varphi \rangle^{-1} \langle f \circ \varphi \rangle$ and $\tilde F=\langle \partial_u \tilde \varphi \rangle^{-1} \langle f \circ \tilde \varphi \rangle$, we have for $\nu \geq 1$
\begin{align*}
\left\|\partial_\theta^\nu (\partial_u \varphi \, F - \partial_u \tilde \varphi \, \tilde F)\right\|_{\rho-\delta} 
&\leq \left\|\partial_\theta^\nu \partial_u \varphi \right\|_{\rho-\delta} \left\|F-\tilde F \right\|_{\rho-\delta} + \left\|\partial_\theta^\nu \partial_u \varphi - \partial_\theta^\nu \partial_u \tilde \varphi \right\|_{\rho-\delta} \left\|\tilde F \right\|_{\rho-\delta} \\
&\leq \frac{8 M \eps}{\delta} \left\|F-\tilde F \right\|_{\rho-\delta} + \frac{2 M}{\delta} 
\|\partial_\theta^\nu (\varphi- \tilde \varphi)\|_{\rho}
\end{align*}
Denoting $A_\theta(u)=\partial_u \varphi_{\theta}(u)$ and $\tilde A_\theta(u)=\partial_u \tilde \varphi_{\theta}(u)$, we have 
%%%%%%%%%%%%%%%%%%%%%%%%%%%%%%%%%%%%%%%%%%%%%%%%%
\begin{align*}
\|F-\tilde F\|_{\rho-\delta}
& \leq
\|\langle A\rangle^{-1}\|_{\rho-\delta}   \;
\|\langle f \circ \varphi - f \circ \tilde \varphi\rangle\|_{\rho-\delta} 
+\|\langle A\rangle^{-1}-\langle\tilde A\rangle^{-1}\|_{\rho-\delta}   \;
\|\langle f \circ \tilde \varphi\rangle\|_{\rho-\delta} \\
& \leq \frac{8M}{\delta}  \; \|\varphi - \tilde \varphi\|_{\rho}
\end{align*}
%%%%%%%%%%%%%%%%%%%%%%%%%%%%%%%%%%%%%%%%%%%%%%%%%
where we have used the bound in (a) 
$\|\langle A\rangle^{-1}\|_{\rho-\delta} \leq 2$, $\|\langle \tilde A\rangle^{-1}\|_{\rho-\delta} \leq 2$  and the inequality 
$$
 \|\langle A\rangle^{-1}-\langle \tilde A\rangle^{-1}\|_{\rho-\delta} \leq \|\langle A\rangle^{-1}\|_{\rho-\delta} \|\langle \tilde A\rangle^{-1}\|_{\rho-\delta} \|A-\tilde A\|_{\rho-\delta} \leq \frac{4}{\delta} \|\varphi - \tilde \varphi\|_\rho.
$$
%%%%%%%%%%%%%%%%%%%%%%%%%%%%%%%%%%%%%%%%%%%%%%%%%
Finally, we obtain
\begin{align} \label{eq:secondhalf}
\left\|\partial_\theta^\nu (\partial_u \varphi \, F - \partial_u \tilde \varphi \, \tilde F)\right\|_{\rho-\delta} 
&\leq \frac{64 M^2  \eps}{\delta^2}  \; \|\varphi - \tilde \varphi\|_{\rho}+ \frac{2 M}{\delta} 
\|\partial_\theta^\nu \varphi-\partial_\theta^\nu \tilde \varphi\|_{\rho}.
\end{align}
%%%%%%%%%%%%%%%%%%%%%%%%%%%%%%%%%%%%%%%%%%%%%%%%%
Collecting all terms, we finally have
%%%%%%%%%%%%%%%%%%%%%%%%%%%%%%%%%%%%%%%%%%%%%%%%%
\begin{align*}
\|\partial_\theta^\nu (\Lambda(\varphi)-\Lambda(\tilde \varphi))\|_{\rho-\delta} \leq 
\frac{2M}{\delta} Q_\nu \left(\frac{16 M \eps}{\delta}\right)  \| \varphi - \tilde \varphi\|_{\rho,\nu}
\end{align*}
with $Q_\nu(\xi) = 1+2 \xi + P_\nu(\xi)$. By a Cauchy estimate it holds that 
$$
\forall \xi  \geq 0, \quad P_\nu(\xi) \leq  2^{\nu+1} \sqrt{e} \, e^{\xi} \, \nu! \quad \mbox{ and } \quad Q_\nu(0) \leq Q_\nu(\xi) \leq  2^{\nu+1} \sqrt{e} \, e^{\xi} \, \nu! + 2 \xi + 1.
$$
%%%%%%%%%%%%%%%%%%%%%%%%%%%%%%%%%%%%%%%%%%%%%%%%%
The corresponding bound for $\Gamma^\eps$ is then obtained straightforwardly. 
\end{proof}

\bibliographystyle{abbrv}
%\nocite{*}
\bibliography{biblioUA}

\end{document}